\theoremstyle{plain}
\newtheorem{theorem}{Theorem}[section]
\newtheorem{lemma}[theorem]{Lemma}
\newtheorem{proposition}[theorem]{Proposition}
\newtheorem{corollary}[theorem]{Corollary}
\theoremstyle{definition}
\newtheorem{definition}[theorem]{Definition}
\newtheorem{example}[theorem]{Example}
\newtheorem{notation}[theorem]{Notation}
\numberwithin{equation}{section}
\DeclareMathOperator{\Aut}{Aut}
\DeclareMathOperator{\st}{St}
\DeclareMathOperator{\rist}{Rist}
\DeclareMathOperator{\C}{\mathcal{C}}
\renewcommand{\epsilon}{\varepsilon}
\title[GGS-groups acting on  trees of growing degrees]{GGS-groups acting on  trees of growing degrees}
\author[R. Skipper]{Rachel Skipper} 
\address{Rachel Skipper: Department of Mathematics, University of Utah, 155 S 1400 E, Salt Lake City, Utah, 84112, USA}
\email{rachel.skipper@utah.edu}
\author[A. Thillaisundaram]{Anitha Thillaisundaram}
\address{Anitha Thillaisundaram: Centre for Mathematical Sciences, Lund University, S\"{o}lvegatan~18, 223 62 Lund, Sweden}
\email{anitha.thillaisundaram@math.lu.se}
\date{\today}
\keywords{Groups acting on irregular rooted trees, branch groups, congruence subgroup property, maximal subgroups, Beauville groups}
\subjclass[2010]{Primary  20E08;  Secondary 20E28}
\begin{document}

\begin{abstract}
 We consider analogues of Grigorchuk-Gupta-Sidki (GGS-)groups acting on trees of growing degree; the so-called growing GGS-groups. These groups are not just infinite and do not possess the congruence subgroup property, but many of them are branch and have the $p$-congruence subgroup property, for a prime~$p$. Among them, we find  groups with  maximal subgroups only of finite index, and with infinitely many such maximal subgroups. These give the first examples of finitely generated branch groups with infinitely many finite-index maximal subgroups. Additionally, we prove that  congruence quotients of growing GGS-groups associated to a defining vector of zero sum give rise to Beauville groups. 

\end{abstract}

\maketitle


\section{Introduction}
Branch groups are groups acting spherically transitively on a spherically homogeneous infinite rooted tree and having normal subgroup structure similar to the subgroup structure of the full automorphism group of the tree. Early constructions,  which have many interesting properties, were produced by Grigorchuk~\cite{Grigorchuk} and Gupta and Sidki~\cite{Gupta}.  Their groups act on a special type of regular rooted tree: the $p$-regular rooted trees, for $p$ a prime. 

These pioneering examples were generalised to a family of GGS-groups, named after Grigorchuk, Gupta and Sidki. A GGS-group is generated by an automorphism $a$, which permutes the $p$ subtrees hanging from the root rigidly according to the permutation $(1\,2\,\cdots\,p)$, and a recursively-defined automorphism $b$, which fixes the first-level vertices of the tree and acts on their respective subtrees as $(a^{e_1},\ldots,a^{e_{p-1}},b)$, for some $e_1,\ldots,e_{p-1}\in \mathbb{F}_p$.

Certain branch groups acting on irregular trees of growing degree, that is, vertices at level~${i-1}$ of the tree have $p^{m_i}$ children, for $m_i\in \mathbb{N}$ with $m_1< m_2<\cdots$, were studied by Fink in~\cite{Fink}. Fink used these groups to construct examples of branch groups with exponential word growth but no free subgroups. 

In this paper, we introduce a generalisation of the GGS-groups to trees of growing degrees. We refer to these analogous GGS-groups as \emph{growing GGS-groups}. These groups do not share all properties of GGS-groups, for instance the analogue of the automorphism $b$ is not of finite order. We refer the reader to Section~\ref{sec:properties} for details. 
The growing GGS-groups also do not possess the congruence subgroup property, however a large subfamily of them does possess the $p$-congruence subgroup property and the weak congruence subgroup property; see Subsections~\ref{sec:subgroups-of-Aut-T} and~\ref{sec:CSP} respectively for the definitions and these results. This subfamily provides the first examples of finitely generated branch groups with such a combination of congruence subgroup properties. In particular this gives the first examples of finitely generated branch groups shown to have the $p$-congruence subgroup property but not the congruence subgroup property; cf. \cite{pcongruence, PR, DNT}.

Our main result concerns the maximal subgroups of growing GGS-groups. 
Now, the study of  maximal subgroups of branch groups acting on regular rooted trees, in particular, $p$-regular rooted trees, is well established, starting from the work of Pervova~\cite{Pervova3, Pervova2,  Pervova4}, who proved that the torsion Grigorchuk groups and
torsion GGS-groups do not contain maximal subgroups of infinite index.
In~\cite{KT},  Pervova's result was extended to torsion multi-EGS groups, also called torsion generalised multi-edge spinal groups. Bondarenko~\cite{Bondarenko}  gave
 the first, though non-explicit, examples of  finitely
generated branch groups that do have maximal subgroups of infinite
index.  His method does not apply to
groups acting on the  2-regular and 3-regular rooted trees. However, recently Francoeur and Garrido~\cite{FG} provided the first examples of finitely generated branch groups, acting on the $2$-regular rooted tree,  with maximal subgroups of infinite index. Their examples are the non-torsion {\v{S}}uni\'{k} groups acting on the 2-regular rooted tree. See \cite{Francoeur, Francoeur-paper,FT} for further related results.

In this paper, we extend Pervova's result to  branch growing GGS-groups that satisfy a natural torsion-type condition, by showing that they do not have maximal subgroups of infinite index, providing the first such examples among finitely generated branch groups acting on irregular rooted trees.

Now for a growing GGS-group  associated to a defining vector $\mathbf{e}=(e_1,\ldots,e_{p^{m_1}-1})$, the torsion-type condition is the following; compare~\cite{vov}:
\begin{equation}\label{eq:periodicity}
\sum_{k=1}^{p^{m_1-i}-1} e_{kp^i} \equiv 0 \pmod {p^{i+1}}\quad\text{for all  }i\in\{0,\ldots,m_1-1\}.
\end{equation}
Note that although none of the groups considered here are torsion, we call this the torsion-type condition as it is the necessary and sufficient condition for a GGS-group acting on the $p^n$-regular rooted tree, for fixed $n$, to be torsion.

\begin{theorem} \label{start} Let $G$ be a  branch growing GGS-group satisfying condition~\eqref{eq:periodicity}. Then $G$ has countably many maximal subgroups, all of which have finite index. In particular, for every prime number~$q$, the group $G$ has a normal maximal subgroup of index~$q$.
\end{theorem}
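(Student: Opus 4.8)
First I would dispose of the soft assertions. Because $G$ is finitely generated it has only finitely many subgroups of each finite index, so the collection of maximal subgroups is automatically countable once we know each of them has finite index; and normal maximal subgroups of prime index are plentiful because, by the computation of the abelianisation in Section~\ref{sec:properties}, $G/G'$ is infinite. Being finitely generated abelian, $G/G'$ then surjects onto $\mathbb{Z}$, hence onto $\mathbb{Z}/q\mathbb{Z}$ for every prime $q$, and the kernel of such a surjection is a normal subgroup of $G$ of index $q$, necessarily maximal since its index is prime. (These already furnish infinitely many maximal subgroups.) So the whole content of the theorem is the assertion that no maximal subgroup of $G$ has infinite index.

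Suppose for contradiction that $M\le G$ is maximal with $[G:M]=\infty$. If $M$ contained $\st_G(n)$ for some $n$ it would have finite index, so for every $n$ the subgroup $M\st_G(n)$ strictly contains $M$ and hence equals $G$; in particular $M$ acts spherically transitively on the tree. I would aim to contradict $[G:M]=\infty$ by exhibiting a level $n$ with $\rist_G(n)\le M$, which is impossible because in the branch group $G$ the rigid level stabilisers have finite index. Since $M$ is transitive on level $n$, for vertices $v,w$ of that level any $m\in M$ with $mv=w$ conjugates $\rist_G(v)$ onto $\rist_G(w)$, so $\rist_G(v)\le M$ for one vertex of level $n$ if and only if it holds for all of them; hence it suffices to rule out the possibility that $\rist_G(v)\not\le M$ for every vertex $v$ at every level $\ge1$.

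So assume $\rist_G(v)\not\le M$ for all such $v$, fix a first-level vertex $v$ and $r\in\rist_G(v)\setminus M$, and set $N:=\langle r^{M}\rangle$. By maximality $\langle M,r\rangle=G$, so $N$ is normalised by $M$ and by itself, hence $N\trianglelefteq G$; and $1\ne N\le\st_G(1)$ since $r\in\rist_G(v)\le\st_G(1)\trianglelefteq G$. Let $\pi_v$ be the projection of $\st_G(1)$ onto the subtree hanging from $v$; this subtree carries the growing GGS-group $G_2$ on the shifted degree sequence, and non-degeneracy of the defining vector (which holds for branch growing GGS-groups) gives $\pi_v(\st_G(1))=G_2$. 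Using $\st_G(v)=\st_G(1)$, the decomposition $\st_G(1)=(M\cap\st_G(1))\,N$ coming from $M\st_G(1)=G$, and the fact that the conjugates $r^m$ with $m\notin\st_G(1)$ project trivially to the $v$-subtree, one finds that $\pi_v(N)$ is normalised by $\pi_v(M\cap\st_G(1))$ while $\pi_v(M\cap\st_G(1))\,\pi_v(N)=G_2$; hence $\pi_v(N)$ is a \emph{nontrivial normal subgroup of $G_2$} and $\pi_v(M\cap\st_G(1))$ surjects onto $G_2/\pi_v(N)$. The plan from here is to iterate this passage down the tree — the subtrees carry the groups $G_2,G_3,\dots$, which remain branch and satisfy the analogue of~\eqref{eq:periodicity} — and, feeding in the structure theory of branch groups (every nontrivial normal subgroup contains $\rist(k)'$ for some $k$) together with commutator computations in the $G_j$, to force the relevant section of $M$ to contain a rigid vertex stabiliser after finitely many steps, the desired contradiction. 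Condition~\eqref{eq:periodicity} is exactly what makes these section computations close up: it controls the sections of the $b$-type generators, guaranteeing that suitable products of conjugates of $b$ land in the subgroups being tracked — the precise analogue of the role played by torsion in Pervova's treatment of the torsion GGS-groups.

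The main obstacle is this last step, and it is harder here than in Pervova's setting for two independent reasons. First, the groups are not torsion, so maximal subgroups need not have prime index or be normal, and the finiteness of the set of maximal subgroups — automatic for torsion GGS-groups, which are $p$-groups with only finitely many maximal subgroups, all normal of index $p$ — fails; one must genuinely analyse an arbitrary infinite-index maximal subgroup through its tree action, and $M\cap\st_G(1)$ can behave like a ``diagonal'' subgroup of infinite index whose projections to individual subtrees are nonetheless large. Second, the degrees grow, so the subtree below a vertex carries a \emph{different} group $G_2$: the recursion is not self-referential but threads through the whole family $\{G_j\}_{j\ge1}$, and the projection and branch estimates — together with the verification that branchness, non-degeneracy and~\eqref{eq:periodicity} all propagate along this family — must be carried out uniformly. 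Making the recursive section estimate work, i.e.\ showing that the normal subgroups $\pi_v(N)$ produced along the way stay large enough to keep the induction running, is the crux.
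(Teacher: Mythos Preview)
Your soft assertions are fine: the abelianisation computation in Proposition~\ref{abelianization} gives $G/G'\cong \mathbb{Z}/p^{m_1}\mathbb{Z}\times\mathbb{Z}$, so surjections onto $\mathbb{Z}/q\mathbb{Z}$ exist for every prime $q$, and countability follows once finite index is established. (Minor slip: the section group at a first-level vertex is $G_1$, not $G_2$.)

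Your route to the main assertion diverges from the paper's and carries a genuine gap. The paper does \emph{not} try to show $\rist_G(n)\le M$; instead it works in Francoeur's prodense framework (Lemma~\ref{lem:prodense} and Theorem~\ref{thm:proper}): a maximal subgroup of infinite index is proper prodense, prodense passes to sections, and a prodense $M$ is proper if and only if \emph{every} section $M_u$ is proper. So it suffices to find a single vertex $u$ with $M_u=G_u$. This is achieved in two concrete steps: Proposition~\ref{proposition: first} uses length reduction on an element of $M\cap bG'$ to produce $b_i^{\,\delta}$ (with $\delta\not\equiv 0\pmod p$) in a conjugate of some $M_u$ --- and this is precisely where condition~\eqref{eq:periodicity} enters, controlling the $a$-exponents that appear when one takes $p$-th powers to kill rooted parts --- and then Theorem~\ref{thm:obtaining-a} adapts Pervova's $\Theta$-map to manufacture $a_k$ from $b^\delta$ together with an approximation $a^nz\in M$. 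Your scheme, by contrast, tracks $N=\langle r^M\rangle$ and argues that $\varphi_v(N)\trianglelefteq G_1$ is nontrivial, hence contains some $\rist_{G_1}(k)'$. That is correct, but it only tells you that $\varphi_v(M\cap\st_G(1))$ surjects onto a finite quotient of $G_1$; it does not tell you that $M$ itself contains any rigid stabiliser, because the section map is not injective and $M$ could sit diagonally. Without a device like Theorem~\ref{thm:proper} to convert ``some section of $M$ is all of $G_u$'' into ``$M=G$'', your iteration has no terminating mechanism: you can make sections large, but you never pull that largeness back up to $M$. That is the missing idea, and it is not a matter of more computation --- the paper's length-reduction/$\Theta$-map machinery is doing genuinely different work from what your outline proposes.
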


This gives the first examples of finitely generated branch groups with infinitely many (normal) maximal subgroups only of finite index. Previous examples, such as the branch GGS-groups acting on the $p$-regular rooted tree, the torsion multi-EGS groups and the torsion Grigorchuk groups all possess only finitely many finite-index maximal subgroups. Even the non-torsion {\v{S}}uni\'{k} groups, which possess maximal subgroups of infinite index, have only finitely many maximal subgroups of finite index.

For $G$ a branch growing GGS-group, we further have that $G$ has uncountably many $\Aut T$-equivalence classes of weakly maximal subgroups, which are all distinct from classes of parabolic subgroups; see Subsection~\ref{sec:weakly-maximal}. We recall that a \emph{weakly maximal subgroup} is a subgroup that is maximal with respect to being of infinite index.

In the last section, we find quotients of growing GGS-groups that are Beauville groups. Recall that Beauville groups are defined in terms of Beauville surfaces: a \emph{Beauville surface} is a compact complex surface isomorphic to $(\C_1\times \C_2)/H$, where  $\C_1$ and $\C_2$ are algebraic curves of genus at least 2, and $H$ is a finite group acting freely on $\C_1\times \C_2$ by holomorphic transformations, with the group $H$ acting faithfully on the curves $\C_i$ such that $\C_i/H\cong \mathbb{P}_1(\mathbb{C})$ and the covering map $\C_i\rightarrow \C_i/H$ is ramified over three points, for $i\in\{1,2\}$.
The group~$H$ is then said to be a \emph{Beauville group}. For more background on Beauville surfaces, we refer the reader to~\cite{BCG}.

Beauville groups have recently gained much attention; see the surveys \cite{fai, fai2, jon}. The abelian Beauville groups were classified by Catanese~\cite{Catanese}: a finite abelian group $A$ is a Beauville group if and only if $A\cong C_n \times C_n$ for $n> 1$ with $\gcd(n, 6) = 1$. After abelian groups, the most natural finite groups to consider are nilpotent groups, and the determination of nilpotent Beauville groups can be reduced to the case of $p$-groups.

There has been considerable effort in finding infinite families of Beauville $p$-groups; see for instance \cite{BBF, BBPV, FAG, Gul, SV}. Recently, G\"{u}l and Uria-Albizuri~\cite{GUA}  showed that the quotients of torsion GGS-groups acting on the $p$-regular rooted tree admit Beauville  structures. This has been generalised to torsion infinite GGS-groups acting on the $p^n$-regular rooted tree, including the case $p=2$, by Di Domenico, G\"{u}l and Thillaisundaram~\cite{DGT}.

One finds quotients of growing GGS-groups by their level stabilisers that are Beauville, but this is not always the case. In particular, we establish the following.

\begin{proposition}\label{Beauville}
 Let $G$ be a growing GGS-group with defining vector~$\mathbf{e}$ which is not congruent to the zero vector modulo~$p$. If
 \begin{enumerate}
     \item[(i)] $\sum_{i=1}^{p^{m_1}-1} e_i =0$, then $G/\st_G(n)$ is a Beauville group for every $n\ge 3$;
     \item[(ii)] 
  $\sum_{i=1}^{p^{m_1}-1} e_i \not\equiv 0 \pmod p$, then $G/\st_G(n)$ is not a Beauville group for every $n$.
  \end{enumerate}
\end{proposition}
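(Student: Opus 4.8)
The plan is to verify the defining condition for a Beauville group directly. Recall that a finite group $H$ is a Beauville group exactly when it carries a \emph{Beauville structure}: generating pairs $(x_1,y_1)$ and $(x_2,y_2)$ of $H$ with $\Sigma(x_1,y_1)\cap\Sigma(x_2,y_2)=\{1\}$, where $\Sigma(x,y):=\bigcup_{g\in H}\bigl(\langle x\rangle^{g}\cup\langle y\rangle^{g}\cup\langle xy\rangle^{g}\bigr)$. Write $Q_n:=G/\st_G(n)$. Since $a$ has $p$-power order and the first-level sections of $b$ are powers of rooted automorphisms of $p$-power order together with one copy of $b$, an induction on $n$ shows that $Q_n$ lies in a Sylow $p$-subgroup of $\Aut T_n$; in particular $Q_n$ is a finite, $2$-generated $p$-group, and by the Burnside basis theorem a pair generates $Q_n$ as soon as its image is a basis of $Q_n/\Phi(Q_n)$.

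For part~(i) I would exhibit, for each $n\ge 3$, an explicit Beauville structure on $Q_n$, following the blueprint of~\cite{GUA} and~\cite{DGT}. One first checks that the standing hypothesis $\mathbf{e}\not\equiv\mathbf{0}\pmod p$ makes the image of $b$ in $Q_n/\Phi(Q_n)$ nonzero and independent of that of $a$, so that $Q_n/\Phi(Q_n)\cong C_p\times C_p$ with basis the images of $a$ and $b$. As candidate generating pairs one takes $(a,b)$ together with a pair $(a^{u_1}b^{v_1},a^{u_2}b^{v_2})$ chosen so that the three lines of $C_p\times C_p$ determined by $a,b,ab$ are as disjoint as possible from the three determined by $a^{u_1}b^{v_1}$, $a^{u_2}b^{v_2}$ and their product; Burnside then guarantees both are generating pairs. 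Since $\Sigma(x_i,y_i)$ is contained in the $\Phi(Q_n)$-preimage of the three lines attached to the $i$th pair, this step already confines $\Sigma(x_1,y_1)\cap\Sigma(x_2,y_2)$ to $\Phi(Q_n)$.

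The substance of part~(i) is to push the disjointness down the level (equivalently, Frattini) filtration: one argues by induction on the depth that $\Sigma(x_1,y_1)\cap\Sigma(x_2,y_2)$ is trivial, using at each stage the conjugation-invariant images in the abelian quotient $G/\st_G(1)\cong C_{p^{m_1}}$, in its analogues for the sections of $G$ on lower-level subtrees, and in the successive layers of the filtration, and crucially the zero-sum hypothesis $\sum_{i}e_i=0$. The latter enters through the identity
\[
\psi\bigl(b\,b^{a}\cdots b^{a^{\,p^{m_1}-1}}\bigr)=\bigl(b^{g_1},\dots,b^{g_{p^{m_1}}}\bigr),\qquad g_k\in G,
\]
which says that this product of $p^{m_1}$ conjugates of $b$ is again, section by section, a conjugate of $b$; this is exactly what makes $b$ propagate coherently to the lower levels and supplies the room to keep the two sets $\Sigma$ separated layer by layer, the hypothesis $n\ge 3$ ensuring that enough levels are present for the induction to close. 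The main obstacle will be this recursive separation, especially for the small primes $p\in\{2,3\}$: there $C_p\times C_p$ has too few lines for the top-level step to give anything, so the whole argument must be run at the deeper levels via the branch structure and the zero-sum identity, as is done for the $p^n$-regular trees in~\cite{DGT}.

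For part~(ii) the plan is to show that the contrary hypothesis, $\sigma:=\sum_{i}e_i\not\equiv 0\pmod p$, forces the two sets $\Sigma$ of any candidate structure on $Q_n$ to meet nontrivially. If $Q_n$ is cyclic there is nothing to do, since no nontrivial cyclic group is Beauville; so assume $Q_n/\Phi(Q_n)\cong C_p\times C_p$. Now, with $\sigma$ a unit modulo $p$, the corresponding identity reads
\[
\psi\bigl(b\,b^{a}\cdots b^{a^{\,p^{m_1}-1}}\bigr)\equiv\bigl(a^{\sigma}\bar b,\dots,a^{\sigma}\bar b\bigr)\pmod{\,G'\times\cdots\times G'\,},
\]
so that $b$ no longer descends as a conjugate of itself but drags along a nontrivial rooted power of $a$. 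Bootstrapping this down the tree, one aims to show that for every generating pair $(x,y)$ of $Q_n$ at least one of $\langle x\rangle,\langle y\rangle,\langle xy\rangle$ is forced, up to conjugacy, to contain a fixed nontrivial element lying deep in the congruence filtration and independent of the pair; this element then belongs to every such $\Sigma$, so $Q_n$ admits no Beauville structure. The main obstacle here is to identify this universal element precisely and to verify that it is captured by every generating pair — this is exactly where the non-vanishing of $\sigma$ modulo $p$ has to be used in an essential way.
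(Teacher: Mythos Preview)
Your overall framing is right, but both parts have genuine gaps where the paper's argument is substantially more concrete than your sketch.

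For part~(i), the step you yourself flag as the main obstacle --- ``push the disjointness down the filtration by induction on depth'' --- is never resolved, and the paper does not argue by recursion on the tree at all. Instead it fixes the explicit pairs $\{a^{-2},ab\}$ and $\{ab^2,b\}$ and proves three preparatory facts: (a) $b$ has order $p^{m_n}$ in $Q_n$; (b) the zero-sum hypothesis forces $\psi_1\bigl((ab^{ip^s})^{p^{m_1}}\bigr)$ to have every component a $G_1$-conjugate of $b_1^{\,ip^s}$, whence $ab^{ip^s}$ has order exactly $p^{m_n+m_1-s}$ in $Q_n$; and, crucially, (c) the unique order-$p$ subgroup $\langle (ab^{ip^s})^{p^{m_n+m_1-s-1}}\rangle$ determines $ip^s$ modulo $p^{m_2}$ up to conjugacy. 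This last rigidity statement is what separates the two $\Sigma$-sets: elements $ab^{j}$ with distinct $j$ modulo $p^{m_2}$ have non-conjugate minimal subgroups. Your displayed identity for $b\,b^{a}\cdots b^{a^{p^{m_1}-1}}$ is essentially observation~(b), but you never extract~(c) from it, and without~(c) there is no mechanism to finish. (Aside: the paper works throughout with $p$ odd, so your worry about $p=2$ does not arise; the special handling at $p=3$ is a one-line adjustment for the element $ab^3$, not a structural obstacle.)

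For part~(ii) you correctly aim for a universal element lying in every $\Sigma$, but you do not identify it, and your phrase ``up to conjugacy'' hides the point. The paper shows that when $\sigma\not\equiv 0\pmod p$, iterating your displayed congruence down $n-1$ levels gives that $(a^{i}b^{j}c)^{p^{m_1+\cdots+m_{n-1}}}$, for any $i,j$ coprime to $p$ and any $c\in Q_n'$, has every level-$(n-1)$ section congruent modulo $\st_{G_{n-1}}(1)$ to the same power of $a_{n-1}$; hence this power is \emph{central} in $Q_n$ and generates a fixed cyclic subgroup of order $p^{m_n}$, independent of $i,j,c$. Since for any generating pair $(x,y)$ of a $2$-generated $p$-group the images of $x$, $y$, $xy$ in the Frattini quotient span three distinct lines, at least one of them avoids both coordinate axes, i.e.\ lies in some coset $a^{i}b^{j}Q_n'$ with $i,j\not\equiv 0\pmod p$. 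That element's $p^{m_1+\cdots+m_{n-1}}$-th power then lands in the fixed central subgroup, and centrality makes the conjugation in the definition of $\Sigma$ irrelevant. Your sketch gestures at this but supplies neither the centrality nor the explicit exponent $m_1+\cdots+m_{n-1}$, and without centrality the ``up to conjugacy'' caveat would leave the argument incomplete.
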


\medskip

\noindent\emph{Organisation.} In Section~\ref{sec:2} we recall the preliminaries for groups acting on rooted trees of growing degree. In Section~\ref{sec:properties} we define the growing GGS-groups and establish their first properties, such as their abelianisation, branching properties, and their congruence subgroup properties. In Section~\ref{sec:maximal}, we prove our main result regarding maximal subgroups, and finally in Section~\ref{sec:Beauville} we prove Proposition~\ref{Beauville}.

\medskip

\noindent\textit{Notation.}
Throughout, we  use left-normed commutators, for example, $[x,y,z] = [[x,y],z]$.

\medskip

\noindent\textbf{Acknowledgements.} We are grateful to Khalid Bou-Rabee for his discussions with the first author a few years ago, which gave rise to the idea of studying branch groups acting on trees of growing degrees. 

This research was supported by a London Mathematical Society Grant Scheme 2 (Visits to the UK) and by the University of Lincoln's Institute of Advanced Studies. The first author was supported by the GIF, grant I-198-304.1-2015, ``Geometric exponents of random walks and intermediate growth groups", NSF grant no. DMS-2005297 ``Group Actions on Trees and Boundaries of Trees", and the European Research Council (ERC) under the European Union’s Horizon 2020 research and innovation program (grant agreement No.725773).


\section{Preliminaries} \label{sec:2} In the present section we recall
the notion of branch groups and establish prerequisites for the rest
of the paper.  For more details,
see~\cite{BarthGrigSunik,NewHorizons}.

\subsection{The trees of growing degrees and their automorphisms}
All trees in this paper will be rooted. 
A spherically homogeneous tree is a tree where, for any given $n$, all vertices of distance~$n$ from the root have the same degree.
Let $p$ be an odd prime and let $p^{\overline{m}}=(p^{m_i})_{i=1}^\infty$, where $m_1<m_2<m_3<\cdots$ is a sequence of increasing positive integers. Define $T=T_{\overline{m}}$ to be the spherically homogeneous tree with branching sequence $p^{\overline{m}}=(p^{m_i})_{i=1}^\infty$. In other words $T$ will be the tree constructed as follows: for each $i\in\mathbb{N}$, let $X_i$ be the alphabet $\{1,2,\ldots,p^{m_i}\}$ and consider the set of finite sequences $X^*= \{\varnothing\}\cup\bigcup_{n\in\mathbb{N}}\big(\prod_{i=1}^n X_i )$. If $u$ is a vertex in $\prod_{i=1}^n X_i$ then we place an edge from $u$ to $v$ whenever $v=ux_{n+1}$ for some $x_{n+1}\in X_{n+1}$. In particular, the root of~$T$ corresponds to the empty word~$\varnothing$. There is a natural length function
on $X^*$ and the words $w$ of length $|w| = n$, representing vertices that are at distance $n$ from the root, make up the \textit{$n$th layer} of the tree. Hence the set~$X^*$ can be identified with the vertices of the tree~$T$, and we will use this identification freely. The \textit{boundary} $\partial T$, consisting of all infinite simple rooted paths, is in one-to-one correspondence with the words in the infinite product $\prod_{i=1}^\infty X_i$.  

All concepts associated to~$T$ that are defined here in Section~2 hold more generally for an arbitrary spherically homogeneous tree, but for notational convenience, we will only define them for~$T$.

We write $T_u$ for the full rooted subtree of $T$ that has its root at
the vertex~$u$ and includes all vertices $v$ with $u$ a prefix of~$v$. As $T$ is spherically homogeneous, for any two vertices $u$ and $v$  of a given level, the
subtrees $T_u$ and $T_v$ are isomorphic under the map that deletes the
prefix~$u$ and replaces it by the prefix~$v$. We  write
$T_n$ to denote the subtree rooted at a generic vertex of level~$n$. Defining $\sigma^n(\overline{m})=(m_i)_{i=n+1}^\infty$ for each $n$, we see that $T_n\cong T_{\sigma^{n}(\overline{m})}$.

We observe that every automorphism of the rooted tree $T$ fixes the root, and that the
orbits of $\Aut T$ on the vertices of the tree~$T$ are precisely its
layers.  Consider an automorphism $f \in \Aut T$ and denote the image of a
vertex~$u$ under $f$ by~$u^f$.  For a vertex~$u$ at level~$n$ 
and a letter $x \in X_{n+1}$ we have $(ux)^f=u^fx'$
where $x' \in X_{n+1}$ is uniquely determined by $u$ and~$f$.  This induces
a permutation $f(u)$ of $X_{n+1}$ so that $(ux)^f = u^f x^{f(u)}$. We refer to $f(u)$ as the \emph{labelling} of $f$ at $u$.
The automorphism~$f$ is called \textit{rooted} if $f(u)=1$ for $u\ne
\varnothing$.  It is called \textit{directed}, with directed path~$\ell$ for some
$\ell \in \partial T$, if the support $\{u \mid f(u)\ne1 \}$ of its
labelling is infinite and contains only vertices at distance~$1$ from~$\ell$.
The \textit{section} of~$f$ at a vertex~$u$ at level~$n$ is the unique automorphism~$f_u$ of~$T_n$ given by the condition $(uv)^f = u^f
v^{f_u}$ for $v \in \prod_{i=n+1}^{\infty} X_i$.

\subsection{Subgroups of the automorphism group of the tree}\label{sec:subgroups-of-Aut-T}
Let $G$ be a subgroup of $\Aut T$. The \textit{vertex stabiliser} $\text{st}_G(u)$ is the subgroup  of
elements in~$G$ that fix the vertex~$u$.  For $n \in \mathbb{N}$, the
\textit{$n$th level stabiliser} $\st_G(n)= \cap_{|v|=n} \text{st}_G(v)$
is the subgroup  of automorphisms that fix all vertices at
level~$n$.  Note that elements in $\st_G(n)$ necessarily fix all vertices up to
level~$n$ and that $\st_G(n)$ has finite index and is normal in~$G$.

The full automorphism group $\Aut T$ is a profinite group, and we have 
\[
\Aut T= \varprojlim_{n\in\mathbb{N}} \Aut T_{[n]},
\]
where $T_{[n]}$ denotes the subtree of~$T$ on the finitely many
vertices up to level~$n$. The topology of $\Aut T$ is defined by the
open subgroups $\st_{\Aut T}(n)$ for $n \in \mathbb{N}$.  Now for $G\le \Aut T$, the level
stabilisers $\st_G(n)$ for $n \in \mathbb{N}$, form a natural family of
so-called \emph{principal congruence subgroups} for~$G$,  and we say the subgroup $G\le\Aut T$
has the \textit{congruence subgroup property} if the profinite
topology and the congruence topology on~$G$ coincide. Equivalently, if for
every subgroup $H$ of finite index in $G$, there exists some $n$ such
that $\st_G(n)\subseteq H$.
A weaker version of the congruence subgroup property is the $p$-\emph{congruence subgroup property} for a fixed prime $p$:  a subgroup~$G\le \Aut T$ has the $p$-congruence subgroup property if for every normal subgroup~$H$ of finite $p$-power index in~$G$, there
exists some $n$ such that $\mathrm{St}_G(n)\subseteq H$; see~\cite{pcongruence}.  Another weaker version of the congruence subgroup property is the \emph{weak congruence subgroup property} introduced by Segal,  where every finite-index subgroup contains $\text{St}_G(n)'$, for some $n$; see~\cite{Segal}.

For $n\in\mathbb{N}$, every $g\in \st_{\Aut T} (n)$ can be identified with a sequence
$g_1,\ldots,g_{p^{m_1+\cdots +m_{n}}}$ of elements of $\Aut T_n$, where $p^{m_1+\cdots +m_{n}}$ is the number of vertices at level~$n$.  Denoting these vertices by $u_1, \ldots,
u_{p^{m_1+\cdots +m_{n}}}$, there is a natural
isomorphism
\[
\st_{\Aut T}(n) \cong \prod\nolimits_{i=1}^{p^{m_1+\cdots +m_{n}}} \Aut T_{u_i}
\cong \Aut T_n \times \overset{p^{m_1+\cdots +m_{n}}}{\dots} \times \Aut T_n.
\] 
This decomposition of~
$g$ into its sections $(g_1,\ldots,g_{p^{m_1+\cdots +m_n}})$ defines the isomorphism
\[
\psi_n \colon \st_{\Aut T}(n) \longrightarrow \Aut T_n \times \overset{p^{m_1+\cdots +m_{n}}}{\dots} \times \Aut T_n.
\]    

Let  $v\in \prod_{i=1}^{n} X_i$ be a vertex of length~$n$. We further define 
\[
\varphi_v :\text{st}_{\Aut T}(v) \longrightarrow \Aut T_{v} \cong\Aut T_n
\]
to be the restriction of $\text{st}_{\Aut T}(v)$ to $T_v$.


Next, the subgroup $\text{rist}_G(u)$, consisting of all automorphisms in~$G$ that fix all vertices~$v$ of~$T$ not having $u$ as a prefix, is
called the \textit{rigid vertex stabiliser} of~$u$ in~$G$. 
For $n\in\mathbb{N}$, the
\textit{rigid $n$th level stabiliser} is the product
\[
\rist_G(n)=\prod\nolimits_{i=1}^{p^{m_1+\cdots+m_{n}}} \text{rist}_G(u_i) \trianglelefteq G
\]
of the rigid vertex stabilisers of the vertices $u_1, \ldots, u_{p^{m_1+\cdots+m_{n}}}$
at level~$n$. 

Recall that a subgroup $G\le\Aut T$ is said to act \textit{spherically transitively} if it acts transitively on every layer of~$T$.

\begin{definition}
A group~$G$ acting on $T=T_{\overline{m}}$ is a  \emph{branch group} if $G$ acts spherically transitively  and all rigid level stabilisers
$\rist_G(n)$ are of finite index in~$G$. A group $G$ acting on $T=T_{\overline{m}}$ is a  \emph{weakly branch group} if $G$ acts spherically transitively and all rigid level stabilisers
$\rist_G(n)$ are non-trivial. 
\end{definition}


\section{The growing GGS-groups and first properties}\label{sec:properties}
We now define a growing GGS-group $G$ acting on a tree $T=T_{\overline{m}}$ with branching $p$-power exponents given by $\overline{m}=(m_1,m_2,m_3,\ldots)$. For $i\in \mathbb{N}$, we will write $a_{i-1}$ to represent the rooted automorphism corresponding to the cycle $(1\,2\,\cdots\,p^{m_i})$.  Let $e_1,\ldots,e_{p^{m_1}-1}\in \{0,\pm1,\ldots,\pm (p^{m_2}-1)\}$ with not all $e_i$ being zero.  Then the \emph{growing GGS-group}~$G=G_{\mathbf{e}}$ acting on the rooted tree $T$ and defined by the vector $\mathbf{e}=(e_1,\ldots,e_{p^{m_1}-1})$, is the group generated by the rooted automorphism~$a_0$ and the directed automorphism~$b_0$ defined as follows:
\[
\psi_1(b_0)=(a_1^{\,e_1},a_1^{\,e_2},\ldots, a_1^{\,e_{p^{m_1}-1}},b_1)
\]
where recursively for $n\ge 2$,
\[
\psi_1(b_{n-1})=(a_n^{\,e_1},a_n^{\,e_2},\ldots, a_n^{\,e_{p^{m_1}-1}}, 1,\overset{p^{ m_n}-p^{m_1}}\ldots,1,b_n).
\]

Write $G_n$ for the $n$th shifted group, that is, the growing GGS-group generated by $a_{n}$ and~$b_n$ which acts on the tree with branching indices $(p^{m_{n+1}},p^{m_{n+2}},\ldots)$.  
Note that $a_n, b_n$, and $G_n$ depend on the choice of $\overline{m}$. Since we will work with arbitrary increasing sequences, to simplify notation we often omit $\overline{m}$ and write $G=G_0$, $a=a_0$ and $b=b_0$.

\begin{definition}
We say that a growing GGS-group~$G$
is \textit{$\sigma$-fractal} if, for each $n$ and vertex~$v$ at the $n$th level, the group~$\varphi_v(\text{st}_G(v))$ coincides
with~$G_n$.
Furthermore we say that the group~$G$ is \emph{strongly $\sigma$-fractal} if $\varphi_x(\text{St}_G(1))=G_1$ for every vertex $x$ at  the first level, and we say that 
the group~$G$ is \emph{super strongly $\sigma$-fractal} if, for each $n$, we have $\varphi_v(\text{St}_G(n))=G_n$ for every vertex~$v$ at the $n$th level; compare \cite[Def.~2.4]{Jone}. 
\end{definition}

We note that traditionally, the terms fractal, strongly fractal and super strongly fractal have been used only in the context of self-similar groups and  $d$-regular rooted trees, for $d\ge 2$, where a group $H \leq \Aut T$ is said to be \emph{self-similar} if for every $h\in H$ and every vertex $u$, the section~$h_u$, of $h$ at $u$, is an element of $H$.
However, we naturally extend the definitions to spherically homogeneous trees and non-self-similar actions. 

\subsection{Abelianisation of growing GGS-groups}  
Let $G$ be a growing GGS-group. To determine the abelianisation of $G$, we first consider an infinite collection of free products
\[
H_i=\langle \hat{a}_i , \hat{b}_i\mid \hat{a}_i^{p^{m_{i+1}}}=1\rangle =\langle \hat{a}_i\rangle *\langle \hat{b}_i\rangle \cong \nicefrac{\mathbb{Z}}{{p^{m_{i+1}}}\mathbb{Z}}*\mathbb{Z},
\]
for $i\in \mathbb{N}\cup\{0\}$. For each such $i$, there is a unique epimorphism $\pi_i:H_i\rightarrow G_i$,  which sends $\hat{a}_i\mapsto a_i$ and $\hat{b}_i\mapsto b_i$. The map $\pi_i$ induces an epimorphism from $\nicefrac{H_i}{H_i'}\cong \langle \hat{a}_i\rangle \times \langle \hat{b}_i\rangle\cong  \nicefrac{\mathbb{Z}}{{p^{m_{i+1}}}\mathbb{Z}}\times \mathbb{Z}$ onto $\nicefrac{G_i}{G_i'}$, which we will now show is an isomorphism. To so so, we proceed analogously to~\cite[Sec.~4.1]{AKT}.

We consider $h\in H_i$, which can be uniquely represented in the form
\begin{equation*}
h=\hat{a}_i^{s_1}\cdot \hat{b}_i^{\beta_1}\cdot \hat{a}_i^{s_2}\cdot \,\cdots\, \cdot \hat{a}_i^{s_{\ell}} \cdot\hat{b}_i^{\beta_{\ell}}\cdot \hat{a}_i^{s_{\ell+1}}
\end{equation*}
where $\ell\in\mathbb{N}\cup\{0\}$ and $s_1,\ldots,s_{\ell+1}\in \nicefrac{\mathbb{Z}}{{p^{m_{i+1}}}\mathbb{Z}}$, $\beta_1,\ldots,\beta_{\ell}\in \mathbb{Z}$ with
\[
s_j\not\equiv 0\quad\text{(mod $p^{m_{i+1}}$)}\qquad \text{for }j\in\{2,\ldots,\ell\},
\]
and $\beta_k\ne 0$ for each $k\in\{1,\ldots,\ell\}$.

We denote by $|h|=\ell$ the \emph{length} of $h$, with respect to the factor~$\hat{b}_i$. For $h_1,h_2\in H_i$ we have
\begin{equation}\label{eq:product-h1-h2}
|h_1h_2|\le |h_1|+|h_2|.
\end{equation}
Also for $h\in H_i$, we define the \textit{exponent maps}
\[
  \begin{split}
    \epsilon_{\hat a_i}(h) & =
    \sum\nolimits_{j=1}^{\ell+1} s_j \in \nicefrac{\mathbb{Z}}{{p^{m_{i+1}}}\mathbb{Z}} \quad \text{and} \quad  \epsilon_{\hat{b}_i}(h) = \sum\nolimits_{k=1}^{\ell} \beta_{k} \in
    \mathbb{Z} 
  \end{split}
\]
with respect to the generating set $\{\hat{a}_i,
\hat{b}_i\}$.

The epimorphism
\begin{equation*} 
H_i \rightarrow
  \nicefrac{\mathbb{Z}}{{p^{m_{i+1}}}\mathbb{Z}} \times \mathbb{Z}, \quad h
  \mapsto (\epsilon_{\hat a_i}(h), \epsilon_{\hat b_i}(h))
\end{equation*}
has kernel $H_i'$. 
Further the group $L(H_i) := \langle
\hat{b}_i\rangle^{H_i}$ is the kernel of the epimorphism
\[
H_i \rightarrow \nicefrac{\mathbb{Z}}{{p^{m_{i+1}}}\mathbb{Z}}, \quad h \mapsto \epsilon_{\hat a_i}(h).
\]
Here for a group~$\Gamma$ and a subset $Y \leq \Gamma$ we denote by $\langle Y \rangle^\Gamma$ the normal closure of~$Y$ in~$\Gamma$. Each element $h \in L(H_i)$ can be uniquely represented by a word of the
form
\begin{equation*} 
h= (\hat{c}_1)^{\hat{a}_i^{t_1}} \cdots
  (\hat{c}_{\ell})^{\hat{a}_i^{t_{\ell}}},
\end{equation*}
where $\ell\in \mathbb{N}\cup\{0\}$ and $t_1, \ldots, t_{\ell} \in
\nicefrac{\mathbb{Z}}{{p^{m_{i+1}}}\mathbb{Z}}$ with $t_j\not \equiv t_{j+1} \pmod{p^{m_{i+1}}}$ for
$j\in \{1,\ldots,\ell-1\}$, and for each $j\in \{1,\ldots,\ell\}$, 
\begin{equation*} 
\hat{c}_j  \in
\langle \hat{b}_i \rangle \backslash \{1\}.
\end{equation*}

Let $\alpha_{i+1}$ denote the cyclic permutation of the factors of $H_{i+1}\times
\overset{p^{m_{i+1}}}{\dots} \times H_{i+1}$ corresponding to the $p^{m_{i+1}}$-cycle $(1 \, 2
\, \cdots \, p^{m_{i+1}})$. Consider the homomorphism
\[
\Phi_i \colon L(H_i) \longrightarrow H_{i+1}\times \overset{p^{m_{i+1}}}{\dots} \times H_{i+1} \quad\text{for }i\in\mathbb{N}\cup\{0\}
\]
given by 
\[
\Phi_i(\hat{b}_i^{\,\hat{a}_i^k}) =
(\hat{a}_{i+1}^{\,e_1},\ldots,\hat{a}_{i+1}^{\,e_{p^{m_1}-1}},1,\overset{p^{m_{i+1}}-p^{m_1}}\ldots,1,\hat{b}_{i+1})^{\alpha_{i+1}^{\,k}}
\quad \text{for  $k \in
  \nicefrac{\mathbb{Z}}{{p^{m_{i+1}}}\mathbb{Z}}$.}
\]

The following result is proved exactly as in~\cite[Lem.~4.1]{AKT}.

\begin{lemma} \label{ceil} Let $H_i$ be as above, and $h \in L(H_i)$ with
  $\Phi_i(h)=(h_1,h_2,\ldots,h_{p^{m_{i+1}}})$.  Then $\sum_{j=1}^{p^{m_{i+1}}} |h_j|
  \le |h|$, and $|h_j| \le \lceil
  \frac{|h|}{2}\rceil$ for each $j \in \{1,\ldots,p^{m_{i+1}}\}$.
\end{lemma}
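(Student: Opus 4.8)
The plan is to follow the standard length--reduction argument, using the explicit normal form for elements of $L(H_i)$ and tracking coordinate by coordinate what $\Phi_i$ does to each factor. First I would write $h\in L(H_i)$ in its unique normal form
\[
h= (\hat{b}_i^{\,\gamma_1})^{\hat{a}_i^{t_1}} \cdots (\hat{b}_i^{\,\gamma_{\ell}})^{\hat{a}_i^{t_{\ell}}},
\]
with $\gamma_1,\ldots,\gamma_\ell\in\mathbb{Z}\setminus\{0\}$ and $t_j\not\equiv t_{j+1}\pmod{p^{m_{i+1}}}$; since consecutive $\hat{a}_i$-exponents in the resulting reduced word are nonzero, no two $\hat{b}_i$-blocks merge, and hence $|h|=\ell$. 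Applying the homomorphism $\Phi_i$ factorwise gives $\Phi_i(h)=\prod_{j=1}^{\ell}\Phi_i(\hat{b}_i^{\,\hat{a}_i^{t_j}})^{\gamma_j}$, and by the defining formula for $\Phi_i$ each factor equals $(\hat{a}_{i+1}^{\,e_1\gamma_j},\ldots,\hat{a}_{i+1}^{\,e_{p^{m_1}-1}\gamma_j},1,\ldots,1,\hat{b}_{i+1}^{\,\gamma_j})^{\alpha_{i+1}^{\,t_j}}$.

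The key observation is that, in a fixed coordinate $c\in\{1,\ldots,p^{m_{i+1}}\}$, this $j$-th factor is either a power of $\hat{a}_{i+1}$, or the element $\hat{b}_{i+1}^{\,\gamma_j}$ of length $1$, the latter occurring exactly when $\alpha_{i+1}^{\,t_j}$ carries the last coordinate to~$c$, i.e.\ when $t_j=\tau_c$ for the unique exponent $\tau_c\in\nicefrac{\mathbb{Z}}{p^{m_{i+1}}\mathbb{Z}}$ with that property. Set $I_c=\{j: t_j=\tau_c\}$ and $n_c=|I_c|$; since $\alpha_{i+1}$ permutes the coordinates cyclically, the sets $I_1,\ldots,I_{p^{m_{i+1}}}$ partition $\{1,\ldots,\ell\}$, so $\sum_c n_c=\ell=|h|$. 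Now $h_c$ is a product of $\ell$ elements of $H_{i+1}$, of which the $n_c$ with index in $I_c$ have length $1$ and the remaining ones lie in $\langle\hat{a}_{i+1}\rangle$ and have length $0$; by repeated use of \eqref{eq:product-h1-h2} we get $|h_c|\le n_c$. Summing over $c$ yields $\sum_{j} |h_j|\le\ell=|h|$, the first assertion.

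For the second assertion, note that if $j$ and $j+1$ both lay in $I_c$ we would have $t_j=\tau_c=t_{j+1}$, contradicting $t_j\not\equiv t_{j+1}\pmod{p^{m_{i+1}}}$. Hence $I_c$ contains no two consecutive integers, so $n_c\le\lceil \ell/2\rceil$, and therefore $|h_c|\le n_c\le\lceil |h|/2\rceil$ for every~$c$.

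I expect the only mildly delicate point to be the bookkeeping: verifying that $t\mapsto$ (position of the $\hat{b}_{i+1}$-entry in $(\cdots)^{\alpha_{i+1}^{\,t}}$) is a bijection onto the coordinates, so that the $I_c$ genuinely partition $\{1,\ldots,\ell\}$, and checking that passing to products of coordinates cannot increase the combined $\hat{b}_{i+1}$-length — both of which are handled cleanly by $\alpha_{i+1}$ being a full cycle and by the subadditivity \eqref{eq:product-h1-h2}. Everything else is the routine computation of \cite[Lem.~4.1]{AKT}, which transfers verbatim to the present setting.
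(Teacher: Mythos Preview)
Your argument is correct and is exactly the standard length-reduction argument of \cite[Lem.~4.1]{AKT} that the paper invokes; the bookkeeping you flag (bijectivity of $t\mapsto$ position of $\hat b_{i+1}$, and subadditivity of length) is handled just as you indicate. Nothing more is needed.
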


\begin{lemma}
Let $h\in L(H_i)$ and suppose $\Phi_i(h)=(h_1, h_2, \dots, h_{p^{m_{i+1}}})$. Then $\epsilon_{\hat{b}_i}(h)=\sum_{j=1}^{p^{m_{i+1}}} \epsilon_{\hat{b}_{i+1}}(h_j)$.
\end{lemma}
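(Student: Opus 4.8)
The plan is to observe that both sides of the claimed identity define group homomorphisms $L(H_i)\to\mathbb{Z}$, and then to check that they agree on a generating set. On the one hand, $\epsilon_{\hat b_i}$ is the composite of the epimorphism $H_i\to\nicefrac{\mathbb{Z}}{p^{m_{i+1}}\mathbb{Z}}\times\mathbb{Z}$ with the projection onto the second factor, so its restriction $h\mapsto\epsilon_{\hat b_i}(h)$ is a homomorphism on $L(H_i)$. On the other hand, $\Phi_i$ is a homomorphism, each coordinate projection $H_{i+1}\times\overset{p^{m_{i+1}}}{\dots}\times H_{i+1}\to H_{i+1}$ is a homomorphism, and each $\epsilon_{\hat b_{i+1}}$ is a homomorphism into the abelian group~$\mathbb{Z}$; hence $h\mapsto\sum_{j=1}^{p^{m_{i+1}}}\epsilon_{\hat b_{i+1}}(h_j)$, where $\Phi_i(h)=(h_1,\ldots,h_{p^{m_{i+1}}})$, is also a homomorphism $L(H_i)\to\mathbb{Z}$.

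Next I would recall, from the normal form for elements of $L(H_i)$ recorded above, that $L(H_i)$ is generated by the conjugates $\hat b_i^{\,\hat a_i^{k}}$ with $k\in\nicefrac{\mathbb{Z}}{p^{m_{i+1}}\mathbb{Z}}$ (indeed $L(H_i)$ is the free product $\ast_{t}\langle\hat b_i^{\,\hat a_i^{t}}\rangle$ over $t\in\nicefrac{\mathbb{Z}}{p^{m_{i+1}}\mathbb{Z}}$, since any $\hat c_j^{\,\hat a_i^{t_j}}$ appearing in the normal form is a power of $\hat b_i^{\,\hat a_i^{t_j}}$). Thus it suffices to verify the identity on these generators. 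For the left-hand side, $\epsilon_{\hat b_i}(\hat b_i^{\,\hat a_i^{k}})=\epsilon_{\hat b_i}(\hat b_i)=1$, as $\epsilon_{\hat b_i}$ kills conjugation. For the right-hand side, by definition $\Phi_i(\hat b_i^{\,\hat a_i^{k}})$ is the tuple $(\hat a_{i+1}^{\,e_1},\ldots,\hat a_{i+1}^{\,e_{p^{m_1}-1}},1,\ldots,1,\hat b_{i+1})$ with its coordinates permuted by $\alpha_{i+1}^{\,k}$; every entry other than the last is a power of $\hat a_{i+1}$ or the identity, and is therefore annihilated by $\epsilon_{\hat b_{i+1}}$, whereas the last entry contributes $\epsilon_{\hat b_{i+1}}(\hat b_{i+1})=1$. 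Since permuting the coordinates does not change the multiset of entries, we obtain $\sum_{j}\epsilon_{\hat b_{i+1}}(h_j)=1$ as well.

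As the two homomorphisms $L(H_i)\to\mathbb{Z}$ agree on the generating set $\{\hat b_i^{\,\hat a_i^{k}}\}_k$, they coincide, which is precisely the assertion of the lemma. There is no serious obstacle here; the only points requiring minor care are recording that $\Phi_i$ and the exponent maps are genuine homomorphisms, so that both sides are additive in~$h$, and noting that the coordinate permutation by $\alpha_{i+1}^{\,k}$ is irrelevant since we sum the $\hat b_{i+1}$-exponents over all coordinates.
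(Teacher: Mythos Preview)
Your proof is correct and is essentially a detailed unpacking of the paper's own one-line argument, which simply says the identity follows from the definition of $\Phi_i$ and the fact that $H_i$ is a free product. Both approaches amount to checking the identity on the generators $\hat b_i^{\,\hat a_i^{k}}$ and extending by multiplicativity.
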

\begin{proof}
This follows from the definition of $\Phi_i$ and the fact that $H_i$ is a free product.
\end{proof}

Consider the
  subgroup $K^{(0)}=\bigcup_{n=0}^{\infty} K_n^{(0)}$ of $H_0$, for  $ K_0^{(0)}=\{1\} $ and
  \[
K_n^{(0)} = \Phi_0^{-1}(K_{n-1}^{(1)} \times \overset{p^{m_1}}\dots \times K_{n-1}^{(1)})  \quad \text{ for $n\in\mathbb{N}$,}
  \]
  where $K_{n-1}^{(1)}$ is recursively defined for the group~$H_1$, and 
  \[
  K_n^{(1)} = \Phi_1^{-1}(K_{n-1}^{(2)} \times \overset{p^{m_2}}\dots \times K_{n-1}^{(2)})
  \]
  with $K_{n-1}^{(2)}$ recursively defined for the group~$H_2$, etc. Here $K_0^{(n)}=1$ for all $n\in\mathbb{N}$.
The following proposition provides a recursive presentation for a growing GGS-group.  It was proved more generally by
Rozhkov~\cite{Rozhkov}.

\begin{proposition} \label{K} Let $G$
  be a growing GGS-group, and set $H=H_0$ and $K=K^{(0)}$, as defined above. Then $K\le L(H) = \langle \hat{b}_0 \rangle
  ^H$, and $K$ is normal in~$H$.  Moreover, the epimorphism $\pi\colon
  H \rightarrow G$ given by $\hat{a}_0 \mapsto a_0$, $\hat{b}_0 \mapsto
  b_0$, has $\ker(\pi)=K$.  In particular
  $G \cong \nicefrac{H}{K}$.
\end{proposition}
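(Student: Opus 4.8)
The plan is to verify each assertion in turn: that $K \le L(H)$, that $K \trianglelefteq H$, and that $\ker(\pi) = K$, the last being the substantive point. The first two are structural and proceed by induction on the layers in the definition $K^{(0)} = \bigcup_n K_n^{(0)}$. For $K \le L(H)$, note that $K_0^{(0)} = \{1\} \le L(H)$ and, inductively, $K_n^{(0)} = \Phi_0^{-1}(K_{n-1}^{(1)} \times \cdots \times K_{n-1}^{(1)})$ lies in the domain of $\Phi_0$, which is $L(H)$ by definition; since $L(H)$ is the full preimage under $\epsilon_{\hat a_0}$ of $0$, membership in $L(H)$ is automatic for anything in $\operatorname{dom}(\Phi_0)$. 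For normality, one checks that $\Phi_0$ is $H$-equivariant in the appropriate sense: conjugation by $\hat a_0$ permutes the coordinates of $H_1 \times \cdots \times H_1$ via $\alpha_1$, and conjugation by $\hat b_0 \in L(H)$ acts within $\Phi_0(L(H))$; since each $K_{n-1}^{(1)} \trianglelefteq H_1$ by the recursive hypothesis, the product $K_{n-1}^{(1)} \times \cdots \times K_{n-1}^{(1)}$ is invariant under both coordinate permutations and coordinatewise conjugation, hence its $\Phi_0$-preimage is normal in $L(H)$; one then upgrades from $L(H)$ to $H$ using that $L(H) \trianglelefteq H$ and that $\hat a_0$-conjugation preserves $K_n^{(0)}$ as just noted. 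Taking the union over $n$ preserves both properties.

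The heart of the matter is $\ker(\pi) = K$. The inclusion $K \subseteq \ker(\pi)$ is proved by induction on $n$ for $K_n^{(0)}$: the base case $K_0^{(0)} = \{1\}$ is trivial, and for the inductive step one uses that $\psi_1 \circ \pi|_{L(H)}$ factors through $\Phi_0$ in the sense that $\psi_1(\pi(h)) = (\pi_1(h_1), \ldots, \pi_1(h_{p^{m_1}}))$ when $\Phi_0(h) = (h_1, \ldots, h_{p^{m_1}})$ — this is immediate from comparing the defining formula for $\Phi_0$ with the recursive definition of $b_0$ and $\psi_1$. Hence if $h \in K_n^{(0)}$, then each $h_j \in K_{n-1}^{(1)} \subseteq \ker(\pi_1)$ by induction, so $\psi_1(\pi(h)) = 1$, giving $\pi(h) = 1$. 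The reverse inclusion $\ker(\pi) \subseteq K$ is where the real work lies, and the natural tool is Lemma~\ref{ceil}: the length function $|\cdot|$ with respect to $\hat b_0$ is the induction parameter. Given $h \in \ker(\pi)$ with $\pi(h) = 1$, one first reduces to the case $h \in L(H)$ — since $\epsilon_{\hat a_0}$ and $\epsilon_{\hat b_0}$ descend to the abelianisation $\nicefrac{G}{G'}$, which is already known to surject the right way, the full argument needs the parallel fact that $\pi$ restricted to $L(H)$ has kernel exactly $K$, and an element of $\ker \pi$ lies in $L(H)$ automatically since $\pi(\hat a_0)$ has order exactly $p^{m_1}$; then writing $\Phi_0(h) = (h_1, \ldots, h_{p^{m_1}})$, each $h_j \in \ker(\pi_1) = K^{(1)}$ by the recursive statement for $H_1$, so that $h \in \Phi_0^{-1}(K^{(1)} \times \cdots \times K^{(1)})$; since $K^{(1)} = \bigcup_m K_m^{(1)}$ and all the $h_j$ have bounded length (bounded by $|h|$ via Lemma~\ref{ceil}), finitely many layers suffice, so $h \in K_n^{(0)}$ for suitable $n$.

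The main obstacle I expect is making the recursion across the infinitely many distinct groups $H_i$ rigorous: unlike the classical GGS-setting there is a single group, here the branching indices grow, so "$K^{(1)}$" is an object for a genuinely different ambient group, and one must be careful that the recursive definitions of $K_n^{(i)}$ terminate correctly and that the length bound from Lemma~\ref{ceil} really does force an element of $\ker(\pi) \cap L(H)$ into some finite stage $K_n^{(0)}$ rather than only into the union. This is exactly the point where one invokes that $\sum_j |h_j| \le |h|$ and $|h_j| \le \lceil |h|/2 \rceil$: the second bound gives that after $\lceil \log_2 |h| \rceil$ levels of descent all sections have length $\le 1$, and length-$\le 1$ elements of the kernel are handled by a direct base computation, so the recursion bottoms out in finitely many steps. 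Since the statement is attributed to Rozhkov~\cite{Rozhkov} in full generality, it would also be legitimate to simply cite that reference and only indicate the above as the shape of the argument; I would include the sketch for the reader's convenience and defer the full verification to~\cite{Rozhkov}.
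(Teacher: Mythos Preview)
The paper does not actually prove this proposition: immediately before the statement it records that the result ``was proved more generally by Rozhkov'' and simply cites~\cite{Rozhkov}. Your proposal therefore goes well beyond what the paper does. The sketch you give is correct and captures the standard argument: the compatibility $\psi_1\circ\pi_0|_{L(H_0)}=(\pi_1\times\cdots\times\pi_1)\circ\Phi_0$, the length contraction from Lemma~\ref{ceil}, and the observation that $\ker(\pi)\subseteq L(H)$ because $a_0$ genuinely has order $p^{m_1}$ (it is a rooted $p^{m_1}$-cycle) are exactly the ingredients required.

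One point worth making explicit if you choose to include the sketch: the induction for $\ker(\pi_i)\subseteq K^{(i)}$ must be run \emph{simultaneously over all indices $i$}, since the sections of an element of $\ker(\pi_0)$ land in $\ker(\pi_1)$, then in $\ker(\pi_2)$, and so on through genuinely different ambient groups. Your phrase ``by the recursive statement for $H_1$'' suggests you see this, but the cleanest formulation is: prove by induction on $\ell$ that for every $i$ and every $h\in\ker(\pi_i)\cap L(H_i)$ with $|h|\le\ell$ one has $h\in K^{(i)}$. The base case $\ell\le 1$ reduces to showing each $b_i$ has infinite order, which follows because its section at depth $k$ along the directed path produces a power $a_{i+k}^{e_j}$ of order tending to infinity with $k$; the inductive step uses Lemma~\ref{ceil} to pass to $\ker(\pi_{i+1})$ with strictly smaller length. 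With that caveat, your closing suggestion---include the sketch for the reader and defer full details to~\cite{Rozhkov}---is precisely what the paper does, only with more content.
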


Analogous to~\cite[Prop.~4.3]{AKT}, we obtain the abelianisation of a growing GGS-group.

\begin{proposition}
  \label{abelianization}
Let~$G$ be a growing GGS-group. Then the map $H \rightarrow
  \nicefrac{\mathbb{Z}}{{p^{m_1}}\mathbb{Z}} \times \mathbb{Z}$ factors through $\nicefrac{G}{G'}$.  Consequently,
  \[
  \nicefrac{G}{G'} \cong \nicefrac{H}{H'} \cong \nicefrac{\mathbb{Z}}{{p^{m_1}}\mathbb{Z}}\times \mathbb{Z}.
  \]
\end{proposition}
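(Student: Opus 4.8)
The goal is to show $\nicefrac{G}{G'}\cong\nicefrac{\mathbb{Z}}{p^{m_1}\mathbb{Z}}\times\mathbb{Z}$, and by Proposition~\ref{K} we have $G\cong\nicefrac{H}{K}$ with $H=H_0=\langle\hat a_0\rangle*\langle\hat b_0\rangle$. The epimorphism $\theta\colon H\to\nicefrac{\mathbb{Z}}{p^{m_1}\mathbb{Z}}\times\mathbb{Z}$, $h\mapsto(\epsilon_{\hat a_0}(h),\epsilon_{\hat b_0}(h))$, has kernel exactly $H'$ (stated in the excerpt). So it suffices to prove that $\theta$ factors through $\nicefrac{G}{G'}$, i.e. that $K\subseteq H'$: then $\theta$ induces a surjection $\nicefrac{G}{G'}\to\nicefrac{\mathbb{Z}}{p^{m_1}\mathbb{Z}}\times\mathbb{Z}$, while the abelianisation of $G$ is in any case a quotient of $\nicefrac{H}{H'}\cong\nicefrac{\mathbb{Z}}{p^{m_1}\mathbb{Z}}\times\mathbb{Z}$ (as $G$ is a quotient of $H$), forcing equality. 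This is exactly the strategy of \cite[Prop.~4.3]{AKT}.

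\medskip

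The heart of the matter is therefore the inclusion $K\subseteq H'$. First I would reduce this to showing $K_n^{(0)}\subseteq H_0'$ for all $n$, since $K=K^{(0)}=\bigcup_n K_n^{(0)}$. I would argue by induction on $n$, and in fact prove the stronger simultaneous statement that $K_n^{(i)}\subseteq H_i'$ for every $i\in\mathbb{N}\cup\{0\}$. The base case $K_0^{(i)}=\{1\}$ is trivial. For the inductive step, take $h\in K_n^{(i)}=\Phi_i^{-1}(K_{n-1}^{(i+1)}\times\cdots\times K_{n-1}^{(i+1)})\subseteq L(H_i)$, so $\Phi_i(h)=(h_1,\ldots,h_{p^{m_{i+1}}})$ with each $h_j\in K_{n-1}^{(i+1)}\subseteq H_{i+1}'$ by induction. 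Now I need a mechanism to transport "each coordinate lies in $H_{i+1}'$" back up through $\Phi_i$ to conclude $h\in H_i'$. The natural device is the preceding lemma: it says $\epsilon_{\hat b_i}(h)=\sum_j\epsilon_{\hat b_{i+1}}(h_j)$; since each $h_j\in H_{i+1}'$ we get $\epsilon_{\hat b_{i+1}}(h_j)=0$, hence $\epsilon_{\hat b_i}(h)=0$. This handles the $\mathbb{Z}$-component. For the $\nicefrac{\mathbb{Z}}{p^{m_1}\mathbb{Z}}$-component, note $h\in L(H_i)=\langle\hat b_i\rangle^{H_i}$ already lies in the kernel of $h\mapsto\epsilon_{\hat a_i}(h)$ (stated in the excerpt), so $\epsilon_{\hat a_i}(h)=0$ automatically. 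Thus $\theta_i(h)=(0,0)$, which by the kernel computation means $h\in H_i'$, completing the induction.

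\medskip

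I expect the main obstacle to be making the exponent bookkeeping through $\Phi_i$ fully rigorous — in particular confirming that the lemma relating $\epsilon_{\hat b_i}(h)$ to $\sum_j\epsilon_{\hat b_{i+1}}(h_j)$ genuinely suffices and that no contribution from the $\hat a_{i+1}^{e_k}$ entries of $\Phi_i(\hat b_i^{\hat a_i^k})$ is lost. Since those entries are powers of $\hat a_{i+1}$, they contribute nothing to $\epsilon_{\hat b_{i+1}}$, so the argument goes through cleanly; the defining vector $\mathbf{e}$ plays no role in the abelianisation, exactly as in the classical GGS case. One should also check the trivial-but-necessary point that $\nicefrac{G}{G'}$ surjects onto $\nicefrac{\mathbb{Z}}{p^{m_1}\mathbb{Z}}\times\mathbb{Z}$ and is a quotient of it, so both maps being surjective homomorphisms between isomorphic (hence, for the $\mathbb{Z}$-factor, Hopfian after tensoring considerations — or more simply, by comparing the two finitely generated abelian groups directly) groups yields the isomorphism. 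Concretely: $\nicefrac{G}{G'}$ is a quotient of $\nicefrac{H}{H'}\cong\nicefrac{\mathbb{Z}}{p^{m_1}\mathbb{Z}}\times\mathbb{Z}$ and surjects onto $\nicefrac{\mathbb{Z}}{p^{m_1}\mathbb{Z}}\times\mathbb{Z}$; any surjective endomorphism of $\nicefrac{\mathbb{Z}}{p^{m_1}\mathbb{Z}}\times\mathbb{Z}$ factoring through $\nicefrac{G}{G'}$ forces $\nicefrac{G}{G'}\cong\nicefrac{\mathbb{Z}}{p^{m_1}\mathbb{Z}}\times\mathbb{Z}$, and the composite $\nicefrac{H}{H'}\twoheadrightarrow\nicefrac{G}{G'}\twoheadrightarrow\nicefrac{\mathbb{Z}}{p^{m_1}\mathbb{Z}}\times\mathbb{Z}$ being the identity (it is $\theta$ modulo $H'$) pins down both arrows as isomorphisms.
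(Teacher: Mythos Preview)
Your argument is correct and is precisely the approach the paper defers to: the paper gives no explicit proof, only citing \cite[Prop.~4.3]{AKT}, and the argument there is exactly the induction you describe---show $K_n^{(i)}\subseteq H_i'$ for all $i$ by using the unnumbered lemma $\epsilon_{\hat b_i}(h)=\sum_j\epsilon_{\hat b_{i+1}}(h_j)$ together with $L(H_i)\subseteq\ker\epsilon_{\hat a_i}$. One minor simplification: once you have $K\subseteq H'$, you can bypass the Hopfian discussion entirely, since then $G/G'=(H/K)/(H'K/K)=H/H'K=H/H'$ directly.
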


\medskip

As above, let $G=\langle a,b\rangle$ be a growing GGS-group, and let $\pi \colon H \rightarrow G$ be the natural epimorphism
with $H$ as above.  The \textit{length} of $g \in G$ is
\[
|g| = \min \{ |h| \mid h \in \pi^{-1}(g) \}.
\]
With reference to Equation~\eqref{eq:product-h1-h2}, we have that for $g_1,g_2
\in G$,
\[
  |g_1 g_2| \leq |g_1| + |g_2|.
\]
Also, using Proposition~\ref{abelianization} we define
$\epsilon_a(g)\in \nicefrac{\mathbb{Z}}{p^{m_1}\mathbb{Z}}$ and $\epsilon_{b}(g) \in
\mathbb{Z}$ via any pre-image $h \in \pi^{-1}(g)$:
\[
  \epsilon_a(g) =
  \epsilon_{\hat a}(h) \quad\text{and}\quad \epsilon_{b}(g)=\epsilon_{\hat b}(h).
\]

The following is a direct consequence of Lemma~\ref{ceil}.

\begin{lemma} \label{shortening} 
Let $G$ be a growing GGS-group and let $g \in \st_G(1)$ with  $\psi_1(g)=(g_1,\ldots,g_{p^{m_1}})$.
  Then $\sum_{j=1}^{p^{m_1}} |g_j| \le |g|$, and
  $|g_j| \le \lceil \frac{|g|}{2}\rceil$ for each
  $j\in \{1,\ldots,p^{m_1}\}$.

  In particular, if $|g|>1$ then $|g_j| < |g|$
  for every $j \in \{1,\ldots,p^{m_1}\}$.
\end{lemma}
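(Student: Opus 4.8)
The plan is to deduce Lemma~\ref{shortening} directly from Lemma~\ref{ceil} by pulling back along the epimorphism $\pi\colon H\to G$ and using compatibility of the sectioning maps with $\Phi_0$ and $\psi_1$. First I would pick a length-realising preimage: given $g\in\st_G(1)$, choose $h\in\pi^{-1}(g)$ with $|h|=|g|$. I claim $h$ can be taken in $L(H_0)=\langle\hat b_0\rangle^{H_0}$; this is because $g\in\st_G(1)$ forces $\epsilon_a(g)=\epsilon_{\hat a_0}(h)=0$ (the first-level permutation of $g$ is trivial, and $\epsilon_a$ reads off that permutation), and $L(H_0)$ is exactly the kernel of $h\mapsto\epsilon_{\hat a_0}(h)$ by the discussion preceding Lemma~\ref{ceil}. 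Strictly one should check that a minimal-length preimage already lies in $L(H_0)$, or else exhibit another preimage of the same length in $L(H_0)$; since $\pi^{-1}(g)=hK$ with $K\le L(H_0)$, and $h\in L(H_0)$ as soon as $\epsilon_{\hat a_0}(h)=0$, any preimage of $g$ lies in $L(H_0)$, so there is nothing extra to do.

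Next I would compare sections. By Proposition~\ref{K} we have $G\cong H/K$ with $K=K^{(0)}$ defined via the maps $\Phi_i$, and the construction is set up precisely so that $\psi_1\circ\pi = (\pi_1\times\cdots\times\pi_1)\circ\Phi_0$ on $L(H_0)$, where $\pi_1\colon H_1\to G_1$ is the analogous epimorphism. Concretely, writing $\Phi_0(h)=(h_1,\ldots,h_{p^{m_1}})$, we get $\psi_1(g)=(\pi_1(h_1),\ldots,\pi_1(h_{p^{m_1}}))$, so each section $g_j=\pi_1(h_j)$ and therefore $|g_j|\le|h_j|$ by definition of length in $G_1$. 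Now apply Lemma~\ref{ceil} with $i=0$: $\sum_{j=1}^{p^{m_1}}|h_j|\le|h|$ and $|h_j|\le\lceil|h|/2\rceil$ for each $j$. Combining,
\[
\sum_{j=1}^{p^{m_1}}|g_j|\le\sum_{j=1}^{p^{m_1}}|h_j|\le|h|=|g|,
\qquad
|g_j|\le|h_j|\le\Bigl\lceil\tfrac{|h|}{2}\Bigr\rceil=\Bigl\lceil\tfrac{|g|}{2}\Bigr\rceil.
\]
For the final assertion, if $|g|>1$ then $\lceil|g|/2\rceil<|g|$, giving $|g_j|<|g|$ for all $j$.

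The one genuine point requiring care — the part I would call the main obstacle, though it is more bookkeeping than difficulty — is justifying the intertwining identity $\psi_1\circ\pi=(\pi_1\times\cdots\times\pi_1)\circ\Phi_0$ on the nose, i.e.\ checking that the recursive definition of $b_0$ via $\psi_1$ matches the definition of $\Phi_0$ on the generator $\hat b_0^{\,\hat a_0^k}$ and hence (by multiplicativity, since both sides are homomorphisms on $L(H_0)$ and $\st_{\Aut T}(1)$ respectively) on all of $L(H_0)$. This is immediate from comparing the displayed formula $\psi_1(b_0)=(a_1^{e_1},\ldots,a_1^{e_{p^{m_1}-1}},b_1)$ together with $\psi_1(b_{n-1})$ for $n\ge2$ against the formula defining $\Phi_0$, after applying $\pi_1$ coordinatewise; the cyclic shift $\alpha_1^{\,k}$ in $\Phi_0$ corresponds to conjugating by $a_0^k$, which is how $b_0^{\,a_0^k}$ distributes its sections. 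Once this is in place the lemma is, as the statement says, a direct consequence of Lemma~\ref{ceil}, and no further computation is needed.
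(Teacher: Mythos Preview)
Your argument is correct and is exactly the intended route: the paper simply states that the lemma ``is a direct consequence of Lemma~\ref{ceil}'', and you have written out precisely the details of that deduction --- lift $g$ to a length-realising $h\in L(H_0)$, use the intertwining $\psi_1\circ\pi=(\pi_1\times\cdots\times\pi_1)\circ\Phi_0$, and transport the inequalities from Lemma~\ref{ceil} down to~$G$. There is nothing to add.
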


\subsection{Branching properties}
In this subsection, we prove a series of lemmas that allow us to determine when certain growing GGS-groups are branch (cf. Theorem~\ref{lem:branch-gamma-3}). We begin with the following two simple but useful results.
As in the case of the GGS-groups acting on the $p$-regular rooted tree, for~$G$ a growing GGS-group acting on $T=T_{\overline{m}}$, we have $\st_G(1)=\langle b\rangle^G$.

\begin{lemma}\label{lem:first-over-derived}
Let $G$ be a growing GGS-group. 
Then $\st_G(1)/G'\cong \mathbb{Z}$, $\st_G(1)/\st_G(1)'\cong \mathbb{Z}^{p^{m_1}}$ and $G'/\gamma_3(G)\cong  
C_{p^{m_1}}$.
\end{lemma}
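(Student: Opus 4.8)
The plan is to compute the three quotients in order, exploiting the abelianisation result (Proposition~\ref{abelianization}) and the recursive structure of $\st_G(1)$ via $\psi_1$.

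First I would treat $\st_G(1)/G'$. Since $\st_G(1)=\langle b\rangle^G$ and $G/G'\cong \nicefrac{\mathbb{Z}}{p^{m_1}\mathbb{Z}}\times\mathbb{Z}$ with $b$ mapping to a generator of the free $\mathbb{Z}$-factor, the image of $\st_G(1)$ in $G/G'$ is exactly the subgroup generated by $\epsilon_b$, i.e.\ the $\mathbb{Z}$-factor; and $G'\le \st_G(1)$. So $\st_G(1)/G'$ is the kernel of $G/G'\to \nicefrac{\mathbb{Z}}{p^{m_1}\mathbb{Z}}$ (the $\epsilon_a$ map), which is $\mathbb{Z}$. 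This is essentially immediate from Proposition~\ref{abelianization}.

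Next, $\st_G(1)/\st_G(1)'$. Here I would use that $\psi_1$ embeds $\st_G(1)$ into $G_1\times\cdots\times G_1$ ($p^{m_1}$ copies), and post-compose each coordinate with $\epsilon_b\colon G_1\to\mathbb{Z}$ to get a homomorphism $\st_G(1)\to\mathbb{Z}^{p^{m_1}}$. Since the abelianisation of each $G_1$ has only a $\mathbb{Z}$ as its free part and $\st_G(1)'$ clearly lies in the kernel, the real work is to show this map is surjective and that its kernel is exactly $\st_G(1)'$. Surjectivity follows by exhibiting, for each coordinate $j$, an element of $\st_G(1)$ whose $\psi_1$-image has nonzero $\epsilon_b$ only in position $j$: the generator $b$ itself has $\psi_1(b)=(a^{e_1},\dots,a^{e_{p^{m_1}-1}},b)$, contributing in the last coordinate, and conjugating by powers of $a$ permutes the coordinates cyclically, while suitable products kill the unwanted $\epsilon_b$ contributions (which all come from the single $b_1$ appearing once); more carefully, $[b,a^k]$-type elements and the commutator trick isolate single coordinates up to elements of $\st_G(1)'$. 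For the kernel, one argues that $\st_G(1)/\st_G(1)'$ is abelian of rank at most $p^{m_1}$ (it embeds via $\psi_1$ into a product of $p^{m_1}$ abelian groups each of $\mathbb{Q}$-rank $1$ after tensoring) and torsion-free (any torsion would have to survive in some $\epsilon_b$ coordinate or come from the $\nicefrac{\mathbb{Z}}{p^{m_1}\mathbb{Z}}$-parts, but elements of $\st_G(1)$ have $\epsilon_a=0$ in every coordinate of $G_1$... this needs the strongly $\sigma$-fractal property or a direct length argument). I expect this surjectivity/kernel bookkeeping to be the main obstacle, since it is the only place where one must genuinely use the recursive definition of $b$ and control the interaction between the $a$-exponents and $b$-exponents of the sections.

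Finally, $G'/\gamma_3(G)$. Since $G/G'\cong C_{p^{m_1}}\times\mathbb{Z}$, the quotient $G'/\gamma_3(G)$ is generated by the single commutator $[a,b]\gamma_3(G)$ (bilinearity of the commutator map $G/G'\times G/G'\to G'/\gamma_3(G)$ and the fact that $\mathbb{Z}\wedge\mathbb{Z}=0$, $C_{p^{m_1}}\wedge C_{p^{m_1}}=0$, so only the mixed term $C_{p^{m_1}}\otimes\mathbb{Z}$ survives, giving a cyclic group of order dividing $p^{m_1}$). To get that the order is exactly $p^{m_1}$, I would show $[a,b]$ has order $p^{m_1}$ modulo $\gamma_3(G)$: the upper bound is $p^{m_1}$ because $a^{p^{m_1}}=1$; for the lower bound, compute $\psi_1$ of $[a,b]^{p^{m_1-1}}$ (or use the known structure of $\st_G(1)/\st_G(1)'\cong\mathbb{Z}^{p^{m_1}}$ just established, together with $\gamma_3(G)\le\st_G(1)'$ in the relevant sense, or a direct comparison of $b^{-1}b^a$ with its sections) and verify it is not in $\gamma_3(G)$. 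A clean way: $[a,b]\equiv b^{-1}b^{a}\pmod{\gamma_3(G)}$ and its $\psi_1$-image differs from that of higher powers in a way detectable by the $\epsilon_b$-coordinate map already analysed. I'd present this last part as a short bilinearity argument plus a one-line order computation, citing Proposition~\ref{abelianization} for the $G/G'$ input.
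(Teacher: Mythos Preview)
Your treatment of $\st_G(1)/G'$ is correct and matches the paper. The other two parts have genuine gaps.

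For $\st_G(1)/\st_G(1)'$, you are working much harder than necessary, and your torsion-freeness argument contains an error: it is \emph{not} true that sections of elements of $\st_G(1)$ have $\epsilon_{a_1}=0$ (indeed $\varphi_i(b)=a_1^{e_i}$). The clean argument, which is what the paper does, is simply that $\st_G(1)=\langle b,b^a,\ldots,b^{a^{p^{m_1}-1}}\rangle$ is generated by $p^{m_1}$ elements, so its abelianisation is a quotient of $\mathbb{Z}^{p^{m_1}}$; on the other hand the map $g\mapsto\big(\epsilon_{b_1}(\varphi_1(g)),\ldots,\epsilon_{b_1}(\varphi_{p^{m_1}}(g))\big)$ factors through $\st_G(1)/\st_G(1)'$ (since $\psi_1(\st_G(1)')\le G_1'\times\cdots\times G_1'$) and is onto $\mathbb{Z}^{p^{m_1}}$ because the $b^{a^i}$ map to standard basis vectors. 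A surjection $\mathbb{Z}^{p^{m_1}}\twoheadrightarrow \st_G(1)/\st_G(1)'\twoheadrightarrow\mathbb{Z}^{p^{m_1}}$ forces both to be isomorphisms. No kernel computation or torsion analysis is needed.

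For $G'/\gamma_3(G)$, your upper bound is fine, but your lower-bound sketch rests on the inclusion $\gamma_3(G)\le\st_G(1)'$, which is false; the correct (and easily checked) inclusion is the opposite one, $\st_G(1)'\le\gamma_3(G)$, since $[b,b^{a^i}]=[b,b[b,a^i]]\equiv 1\pmod{\gamma_3(G)}$. The paper exploits this direction: modulo $\st_G(1)'$ the group $\gamma_3(G)$ is generated by the conjugates $[a,b,a]^{a^i}$ (note $[a,b,b]\in\st_G(1)'$), and one then works in the free abelian quotient $\st_G(1)/\st_G(1)'\cong\mathbb{Z}^{p^{m_1}}$ via the $b_1$-exponent vector. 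The image of $\gamma_3(G)$ there is the span $\mathbf{V}$ of the cyclic shifts of $(-1,2,-1,0,\ldots,0)$, while $[a^{p^{m_1-1}},b]$ has $b_1$-exponent vector $(0,\ldots,0,-1,0,\ldots,0,1)$ with the $-1$ in position $p^{m_1-1}$; a direct linear-algebra check shows this vector is not in $\mathbf{V}$. Your proposed ``one-line order computation'' does not survive without this concrete comparison, so you should replace the vague bilinearity remark with this explicit $b_1$-exponent argument.
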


\begin{proof}
As mentioned  above, we have \[
\st_G(1)=\langle b,b^a,\ldots,b^{a^{p^{m_1}-1}}\rangle.
\]
As $b^{a^i}\equiv b^{a^j} \pmod {G'}$ for $i,j\in \mathbb{Z}/p^{m_1}\mathbb{Z}$, it then follows from Proposition~\ref{abelianization} that
$\st_G(1)/G'=\langle b\rangle G'/G' \cong \mathbb{Z}$,
as required.

Also, since $\psi_1(\st_G(1)')\le G_1'\times \overset{p^{m_1}}\dots\times G_1'$ we have  $\st_G(1)/\st_G(1)'\cong \mathbb{Z}^{p^{m_1}}$.

For the third statement, we observe that $G'/\gamma_3(G)=\langle [a,b]\rangle\gamma_3(G)/\gamma_3(G)$. Since $$
1=[a^{p^{m_1}},b]\equiv [a,b]^{p^{m_1}} \pmod {\gamma_3(G)},
$$
it follows that $|G':\gamma_3(G)|\mid p^{m_1}$. To show equality, it suffices to show that $[a^{p^{m_1-1}},b]\notin \gamma_3(G)$. First we note that $[b,b^{a^i}]=[b,b[b,a^i]]\equiv [b,b] \pmod {\gamma_3(G)}$ for any $i\in \mathbb{Z}/p^{m_1}\mathbb{Z}$, and hence $\st_G(1)'\le \gamma_3(G)$. 
We have 
\begin{align*}
    \psi_1([a^{p^{m_1-1}},b])&=\psi_1\big(\big((b^{-1})^{a^{p^{m_1-1}}}b\big)\big)\\
    &=(a_1^{\,*},\overset{p^{m_1-1}-1}\ldots, a_1^{\,*}, b_1^{\,-1}a_1^{\,e_{p^{m_1-1}}},a_1^{\,*},\ldots, a_1^{\,*}, a_1^{-e_{p^{m_1}-p^{m_1-1}}} b_1),
\end{align*}
where $*$ represents unspecified exponents. Let $\mathbf{v}=(0,\overset{p^{m_1-1}-1}\ldots,0,-1,0,\ldots,0,1)$ denote the vector encoding the total exponents of~$b_1$ in each of the components of $\psi_1\big(\big((b^{-1})^{a^{p^{m_1-1}}}b\big)\big)$. Writing $\mathbf{V}$ for the span of such $b_1$-exponent vectors of elements in~$\gamma_3(G)$, we claim that $\mathbf{v}\notin \mathbf{V}$. To see this, it suffices to consider the $b_1$-exponent vectors of the elements $[a,b,a]$, $[a,b,a]^a$, $\ldots$, $[a,b,a]^{a^{p^{m_1}-1}}$, since the images of these elements  generate $\gamma_3(G)/\st_G(1)'$. We see that 
\[
\mathbf{V}=\langle (-1,2,-1,0,\ldots,0), (0,-1,2,-1,0,\ldots,0),\,\ldots\,,(2,-1,0,\ldots,0,-1)\rangle,
\]
and as a direct check shows that $(0,\overset{p^{m_1-1}-1}\ldots,0,-1,0,\ldots,0,1)\notin\mathbf{V}$, the result follows.
\end{proof}

\begin{notation}
Following the notation in~\cite{DFG}, let $\mathcal{F}$ 
be the set of defining vectors that are non-zero modulo~$p$. 
\end{notation}

\begin{lemma}\label{lem:fractal}
Let~$G$ be a growing GGS-group with defining vector $\mathbf{e}\in \mathcal{F}$. Then the group~$G$ acts spherically transitively on~$T$ and~$G$ is both $\sigma$-fractal and strongly $\sigma$-fractal.
\end{lemma}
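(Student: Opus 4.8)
The plan is to prove the three assertions --- spherical transitivity, $\sigma$-fractality, and strong $\sigma$-fractality --- in that order, each one building on the previous, mirroring the classical GGS-group arguments adapted to the growing setting. For spherical transitivity, I would argue by induction on the level~$n$. The rooted automorphism $a$ acts as the full cycle $(1\,2\,\cdots\,p^{m_1})$ on the first level, so $G$ is transitive there. Assuming transitivity up to level~$n$, it suffices to show $\mathrm{st}_G(v)$ acts transitively on the children of some (hence every) vertex $v$ at level~$n$; by the inductive hypothesis it is enough to treat a single convenient~$v$, namely the vertex $1\cdots 1$ on the leftmost branch. There the section of $b$ is again a directed automorphism $b_n$ together with the rooted $a_n$ lying above it in $G_n$, and since $\mathbf{e}\in\mathcal{F}$ some $e_j$ is a unit modulo~$p$, so the section $a_{n}^{e_j}$ generates $\langle a_n\rangle$, giving the full cycle on the $p^{m_{n+1}}$ children. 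This is the standard "some entry of the defining vector is invertible" trick; the only new wrinkle is bookkeeping the shifted groups $G_n$ and the varying degrees $p^{m_{n+1}}$.

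For $\sigma$-fractality I need $\varphi_v(\mathrm{st}_G(v)) = G_n$ for every vertex $v$ at level~$n$. By spherical transitivity all vertices at a given level are equivalent under $G$, so it suffices to do this for one vertex per level, and by iterating (composing the maps $\varphi$ down one level at a time, using $T_v \cong T_n \cong T_{\sigma^n(\overline m)}$) it reduces to the case $n=1$ at each stage with the group replaced by the appropriate shift $G_{n-1}$. So the core claim is: $\varphi_x(\mathrm{st}_G(x))$ contains both $a_1$ and $b_1$ for a suitable first-level vertex~$x$. Taking $x$ to be the last letter $p^{m_1}$, the section of $b$ at $x$ is exactly $b_1$, so $b_1 \in \varphi_x(\mathrm{st}_G(x))$. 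To get $a_1$, I would use that $\psi_1(b_0)$ has an entry $a_1^{e_j}$ with $e_j$ a unit mod~$p$; picking the element $b \cdot b^{a^{k}}$ or a suitable commutator $[b, a^k]$ lying in $\mathrm{st}_G(1)$ whose $x$-section is a nontrivial power of $a_1$ that generates $\langle a_1 \rangle$ — here I may need to combine two such sections and invoke that the relevant exponents are units — yields $a_1 \in \varphi_x(\mathrm{st}_G(x))$. Reverse inclusion is immediate since sections of elements of $G$ in $\langle a, b\rangle$ land in $\langle a_1, b_1\rangle = G_1$.

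For strong $\sigma$-fractality the requirement is stronger: $\varphi_x(\mathrm{St}_G(1)) = G_1$ for \emph{every} first-level vertex~$x$, not just one. The containment $\subseteq$ is clear. For $\supseteq$, fix an arbitrary $x$. As above, $b$ itself gives $b_1$ in the section at the last vertex; to obtain $b_1$ in the section at an arbitrary vertex $x$ I would conjugate by a rooted power $a^k$ carrying $x$ to the last vertex and use that $\mathrm{St}_G(1)$ is normal in $G$, so $b^{a^k} \in \mathrm{St}_G(1)$ has the desired section. To obtain $a_1$ in the section at $x$: produce an element of $\mathrm{St}_G(1)$ all of whose sections are trivial except at $x$, where it is a generating power of $a_1$ --- this is where I expect the main obstacle to lie. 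The natural candidates are commutators $[b, b^{a^k}]$ or products $b^{a^i} (b^{a^j})^{-1}$, whose $\psi_1$-images have $a_1$-powers in several coordinates; one must take an appropriate combination so that the $b_1$-parts cancel in every coordinate while the $a_1$-parts cancel in all coordinates except~$x$, and then use the invertibility of some $e_j$ mod~$p$ (possibly together with the fact that consecutive differences $e_j - e_{j'}$ span a nontrivial subgroup, or a clearing argument over $\mathbb{Z}/p^{m_2}\mathbb{Z}$) to conclude the surviving coordinate generates $\langle a_1\rangle$. I would structure this as: first reduce mod the $a_1$-exponent-vector lattice exactly as in the proof of Lemma~\ref{lem:first-over-derived}, exhibiting that the standard vector $e_i$ is in the $\mathbb{Z}$-span of the vectors $(e_1,\dots,e_{p^{m_1}-1},0,\dots)$ and its cyclic shifts modulo the obvious relations, using $\gcd(e_j,p)=1$; then lift back. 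Once $a_1$ and $b_1$ are both in $\varphi_x(\mathrm{St}_G(1))$ for every $x$, strong $\sigma$-fractality follows, and $\sigma$-fractality is then immediate (or can be cited from the $n=1$ case plus induction). The delicate point throughout is that the "extra" trivial coordinates $1,\overset{p^{m_n}-p^{m_1}}{\dots},1$ in $\psi_1(b_{n-1})$ do not interfere --- they actually help, since they make many sections trivial automatically.
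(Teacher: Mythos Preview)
Your plan for spherical transitivity and $\sigma$-fractality is essentially correct and matches the paper's approach, though you make it slightly harder than necessary by working at the last vertex $p^{m_1}$: the paper instead fixes an index $i$ with $e_i\not\equiv 0\pmod p$ and observes directly that $\varphi_i(b)=a_1^{e_i}$ generates $\langle a_1\rangle$ while $\varphi_i(b^{a^i})=b_1$, so no products or commutators are needed.

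For strong $\sigma$-fractality, however, you have set yourself a much harder task than the lemma requires. You write that you must ``produce an element of $\mathrm{St}_G(1)$ all of whose sections are trivial except at $x$, where it is a generating power of $a_1$'' --- but this is a rigid-stabiliser-type statement, not what strong $\sigma$-fractality asks for. The condition $\varphi_x(\mathrm{St}_G(1))=G_1$ only demands that for each generator of $G_1$ there exist \emph{some} element of $\mathrm{St}_G(1)$ whose section at $x$ equals that generator; the other sections may be anything at all. With this in mind the argument collapses: since $\mathrm{St}_G(1)=\langle b,b^a,\ldots,b^{a^{p^{m_1}-1}}\rangle$ and the tuple $\psi_1(b^{a^k})$ is the cyclic shift by $k$ of $(a_1^{e_1},\ldots,a_1^{e_{p^{m_1}-1}},b_1)$, for any vertex $x$ one simply chooses $k$ so that $\varphi_x(b^{a^k})=b_1$ and chooses $j$ so that $\varphi_x(b^{a^j})=a_1^{e_i}$ with $e_i$ a unit modulo~$p$. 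Both $b^{a^k}$ and $b^{a^j}$ lie in $\mathrm{St}_G(1)$, and you are done. Your proposed lattice/commutator argument is therefore unnecessary, and as you yourself flag, it is also where the proposal becomes vague; the clearing argument you sketch would in fact be proving something closer to Lemma~\ref{lem:G'} or Theorem~\ref{lem:branch-gamma-3}, which genuinely require extra hypotheses on~$\mathbf{e}$.
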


\begin{proof}
For the first statement, since $\overline{m}$ is arbitrary and $\mathbf{e}\in \mathcal{F}$, it suffices to show that~$G$ acts transitively at the first level and that $\varphi_v(\text{st}_G(v))=G_1$ for $v\in X_1$. Clearly the group~$G$ acts transitively at the first level since $a$ does. The rest of the statement follows from the fact that for some $i$ we have
\[
\varphi_i(b)=a_1^{e_i}\quad\text{and}\quad\varphi_i(b^{a^i})=b_1,
\]
with $e_i\not\equiv 0 \pmod p$ since $\mathbf{e}\in \mathcal{F}$.

The second statement follows similarly, using  $\st_G(1)=\langle b,b^a,b^{a^2},\ldots,b^{a^{p^{m_1}-1}}\rangle$.
\end{proof}

With regards to the above result, it is also clear that if $\mathbf{e}\notin \mathcal{F}$, then the group~$G$ does not act spherically transitively on~$T$ and similarly $G$ is not $\sigma$-fractal.

\begin{lemma}\label{lem:super-strongly-fractal}
Let $G$ be a growing GGS-group such that for all $n$,
\[
\psi_1(\gamma_3(G_{n-1}))\ge \gamma_3(G_n)\times \overset{p^{m_n}}\dots\times \gamma_3(G_n).
\]
Then $\mathbf{e}\in \mathcal{F}$ and $\mathbf{e}$ is non-constant. Furthermore $G$ is super strongly $\sigma$-fractal.

\end{lemma}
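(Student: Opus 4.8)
The plan is to extract the two numerical conclusions ($\mathbf{e}\in\mathcal{F}$ and $\mathbf{e}$ non-constant) directly from the hypothesis by evaluating $\psi_1$ on suitable elements of $\gamma_3(G_{n-1})$ and projecting onto abelian quotients, and then to bootstrap super strong $\sigma$-fractality from strong $\sigma$-fractality (Lemma~\ref{lem:fractal}) via an induction on the level together with the given containment of $\gamma_3$'s. First I would observe that the hypothesis in particular forces $G$ to be spherically transitive: if $\mathbf{e}\notin\mathcal{F}$ then, as remarked after Lemma~\ref{lem:fractal}, the first-level action is intransitive and one checks $\gamma_3(G)$ cannot surject (in the relevant coordinate sense) onto a full product $\gamma_3(G_1)\times\cdots\times\gamma_3(G_1)$; more concretely, I would compute the $b_1$-exponent vectors of the generators $[a,b,a]^{a^k}$ of $\gamma_3(G)/\st_G(1)'$ — these are the cyclic shifts of $(-1,2,-1,0,\ldots,0)$ exactly as in the proof of Lemma~\ref{lem:first-over-derived} — and note that the span $\mathbf{V}$ of these vectors, reduced modulo~$p$, has codimension controlled by $\gcd$-type conditions on~$\mathbf{e}$; requiring that $\psi_1(\gamma_3(G))$ contain $\gamma_3(G_1)\times\overset{p^{m_1}}\dots\times\gamma_3(G_1)$ is only possible when $\mathbf{e}$ is non-zero mod~$p$. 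For non-constancy, if $\mathbf{e}=(e,e,\ldots,e)$ were constant, then every section $\varphi_i(b)=a_1^{e}$ for $i<p^{m_1}$ would be the same power of $a_1$, and I would show the resulting $b_1$-exponent vectors coming from $\gamma_3(G)$ all lie in the subspace of vectors with equal coordinates in the first $p^{m_1}-1$ slots, which cannot fill out a full product of copies of $\gamma_3(G_1)$.

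Next, granted $\mathbf{e}\in\mathcal{F}$, Lemma~\ref{lem:fractal} gives that $G$ (and each $G_n$) is strongly $\sigma$-fractal, i.e.\ $\varphi_x(\st_G(1))=G_1$ for every first-level vertex~$x$. To upgrade to super strong $\sigma$-fractality I would argue by induction on~$n$, the case $n=1$ being strong $\sigma$-fractality. For the inductive step, take a vertex $v$ at level~$n$, write $v=xw$ with $x\in X_1$ and $w$ a vertex of level $n-1$ in the shifted tree~$T_1$; then $\varphi_v(\st_G(n))=\varphi_w\big(\varphi_x(\st_G(n))\big)$ and it suffices to show $\varphi_x(\st_G(n))\supseteq\st_{G_1}(n-1)$, after which the inductive hypothesis applied to $G_1$ finishes the job. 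The point where the $\gamma_3$-hypothesis enters is precisely here: one knows $\varphi_x(\st_G(1))=G_1$ and that $\st_G(n)\trianglelefteq\st_G(1)$, so $\varphi_x(\st_G(n))$ is a normal subgroup of $G_1$, and I would identify it as containing $\st_{G_1}(n-1)$ by showing it contains enough of $\gamma_3(G_1)$'s lower terms — this is where $\psi_1(\gamma_3(G_{n-1}))\ge\gamma_3(G_n)\times\cdots\times\gamma_3(G_n)$ lets one produce, inside sections of $\st_G(n)$, arbitrary prescribed elements of $\gamma_3(G_m)$ in single coordinates at deeper levels, and then combine with strong $\sigma$-fractality to recover all of $\st_{G_1}(n-1)$.

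The main obstacle I anticipate is the inductive step, specifically proving that $\varphi_x(\st_G(n))$ is \emph{all} of $\st_{G_1}(n-1)$ rather than merely a large normal subgroup: strong $\sigma$-fractality only hands us $G_1$ from $\st_G(1)$, and passing to the level-$n$ stabiliser loses information about the top two ``abelian'' layers ($G_1/\gamma_3(G_1)$), which must be recovered by hand. The $\gamma_3$-hypothesis is tailored to restore exactly those layers: it guarantees that conjugates and commutators of $b$-type elements lying in $\st_G(n)$ have sections realising any element of $\gamma_3(G_m)$ in a chosen coordinate, and I would combine this with the identity $\st_{G_1}(n-1)=\big(\st_{G_1}(n-1)\cap\gamma_3(G_1)\big)\cdot(\text{rooted/directed part})$ and an argument that the rooted and directed generators' powers needed are already visible in $\varphi_x(\st_G(n))$ because $a_1,b_1$ themselves have suitable powers fixing level $n-1$. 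Carefully tracking which powers of $a_n,b_n$ appear and checking the coordinate bookkeeping for the shifted groups $G_n$ (whose branching indices differ at each level) will be the routine but lengthy part; the conceptual content is the reduction just described.
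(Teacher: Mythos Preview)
Your $b_1$-exponent-vector strategy for establishing $\mathbf{e}\in\mathcal{F}$ and non-constancy does not work. The span $\mathbf{V}$ of the cyclic shifts of $(-1,2,-1,0,\ldots,0)$ is a fixed circulant subspace depending only on $p^{m_1}$, not on $\mathbf{e}$; it therefore cannot detect whether $\mathbf{e}\equiv 0\pmod p$ or whether $\mathbf{e}$ is constant. (Moreover, the first-level action is always transitive, since $a$ is a full $p^{m_1}$-cycle; the remark after Lemma~\ref{lem:fractal} only says that \emph{spherical} transitivity fails when $\mathbf{e}\notin\mathcal{F}$, i.e.\ at some deeper level.) Your claim that for constant $\mathbf{e}$ the $b_1$-exponent vectors of elements of $\gamma_3(G)$ lie in the subspace with equal first $p^{m_1}-1$ coordinates is false: the $b_1$-exponent vector of $[a,b,a]$ is a cyclic shift of $(2,-1,0,\ldots,0,-1)$ regardless of~$\mathbf{e}$, and in any case every element of $\gamma_3(G_1)\times\cdots\times\gamma_3(G_1)$ has $b_1$-exponent vector zero, so this invariant imposes no obstruction to the containment in the hypothesis. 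The paper argues directly: if every $e_i\equiv 0\pmod p$ then all first-level sections of elements of $G$ lie in $\langle a_1^{\,p},b_1\rangle$, whence $\gamma_3(G_1)=\langle[a_1,b_1,a_1],[a_1,b_1,b_1]\rangle^{G_1}$ is out of reach; and if $\mathbf{e}=(c,\ldots,c)$ is constant, the paper computes the images $\psi_1([b,b^{a^i},b^{a^j}])$ modulo $\gamma_4(G_1)\times\cdots\times\gamma_4(G_1)$ and observes that their nontrivial coordinates always occur in linked pairs across two positions, so $([a_1^{\,c},b_1,a_1^{\,c}],1,\ldots,1)$ cannot lie in the image.

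For super strong $\sigma$-fractality, your inductive plan targets $\varphi_x(\st_G(n))\supseteq\st_{G_1}(n-1)$, which you rightly flag as the obstacle and do not resolve; in fact it is stronger than needed and the paper bypasses it. Iterating the hypothesis yields, for every level-$(n-1)$ vertex $u$, preimages in $\gamma_3(G)$ of arbitrary tuples in $\gamma_3(G_{n-1})\times\cdots\times\gamma_3(G_{n-1})$; since $\gamma_3(G_{n-1})\le\st_{G_{n-1}}(1)$, these preimages already lie in $\st_G(n)$, so $\varphi_u(\st_G(n))\supseteq\gamma_3(G_{n-1})$. It therefore suffices that $\psi_1(\gamma_3(G_{n-1}))$ be a \emph{subdirect} product of $G_n\times\cdots\times G_n$, and this the paper obtains from the single computation of $\psi_1([b_{n-1},a_{n-1},a_{n-1}])$: its last coordinate is $b_n$, and since $\mathbf{e}\in\mathcal{F}$ some other coordinate contains a generating power of $a_n$. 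This sidesteps any need to recover the abelian layers $\st_{G_1}(n-1)/\gamma_3(G_1)$ by hand.
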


\begin{proof}
The statement $\mathbf{e}\in\mathcal{F}$ is clear since $\gamma_3(G_1)$ equals $\langle [a_1,b_1,a_1],[a_1,b_1,b_1]\rangle^{G_1}$, and not $\langle [a_1^{\,p},b_1,a_1^{\,p}],[a_1^{\,p},b_1,b_1]\rangle^{G_1}$. Suppose that $\mathbf{e}$ is a constant vector $(c,\overset{p^{m_1}-1}\ldots,c)$ for some $c\not\equiv 0 \pmod p$. We proceed to show that
$([a_1^{\,c},b_1, a_1^{\,c}],1,\ldots,1)\not\in \psi_1\big(\gamma_3(\st_G(1))\big)$. Since   by assumption
$\psi_1\big(\gamma_3(\st_G(1))\big)=\gamma_3(G_1)\times \overset{p^{m_1}}\dots\times \gamma_3(G_1)$, this yields the desired contradiction. 

To this end, observe that we may without loss of generality work modulo $\gamma_4(G_1)\times \overset{p^{m_1}}\dots\times \gamma_4(G_1)$. Note that $\psi_1\big(\gamma_3(\st_G(1))\big)$ is normally generated by the elements $[b,b^{a^i},b^{a^j}]$ for $i,j\in\{0,1,\ldots,p^{m_1}-1\}$ with $i\ne 0$, and modulo $\gamma_4(G_1)\times \overset{p^{m_1}}\dots\times \gamma_4(G_1)$ we have 
\begin{align*}
\psi_1( [b,b^{a^{i}},b])&\equiv(1,\overset{i-1}\ldots, 1, [a_1,b_1,a_1]^{2c},1,\ldots, 1,[a_1,b_1,b_1]^{-c})\\
\psi_1( [b,b^{a^{i}},b^{a^{i}}])&\equiv(1,\overset{i-1}\ldots, 1, [a_1,b_1,b_1]^c,1,\ldots, 1,[a_1,b_1,a_1]^{-2c})
\end{align*}
and 
\[
\psi_1( [b,b^{a^{i}},b^{a^{j}}])\equiv(1,\overset{i-1}\ldots, 1, [a_1,b_1,a_1]^{2c},1,\ldots, 1,[a_1,b_1,a_1]^{-2c})
\]
for $j\ne 0,i$. From the form of these generators, it follows that 
$([a_1^{\,c},b_1, a_1^{\,c}],1,\ldots,1)\not\in \psi_1\big(\gamma_3(\st_G(1))\big)$, as wanted.

For the final statement, it is clear that $\varphi_x(\st_G(1))=G_1$ for every $x\in X_1$. Then note that for $G_1$ the defining vector is $\mathbf{e}_1:=(e_1,\ldots,e_{p^{m_1}-1},1,\overset{p^{m_2}-p^{m_1}}\ldots,1)$. Using the fact that $\overline{m}$ is arbitrary and that $G_1$ is $\sigma$-fractal, it suffices to show that $\psi_1(\gamma_3(G_1))$ is a subdirect product of $G_2 \times \overset{p^{m_2}}\dots\times G_2$. Observe that 
\begin{align*}
\psi_1([b_1,a_1,a_1])&=(b_2^{-1}a_2^{e_1}b_2^{-1}, a_2^{e_2-2e_1}b_2, a_2^{e_1-2e_2+e_3},\ldots, a_2^{e_{p^{m_1}-3}-2e_{p^{m_1}-2}+e_{p^{m_1}-1}}, \\
&\qquad a_2^{e_{p^{m_1}-2}-2e_{p^{m_1}-1}},a_2^{e_{p^{m_1}-1}},1,\ldots, 1,b_2).
\end{align*}
From this it is clear that $\psi_1(\gamma_3(G_1))$ is a subdirect product of $G_2 \times \overset{p^{m_2}}\dots\times G_2$. 
\end{proof}

Now we determine  cases when the branching properties in the above lemma hold.

\begin{notation} 
Again following~\cite{DFG}, for $\mathbf{e}\in\mathcal{F}$, we define
\[
Y(\mathbf{e})=\{i\in\{1,\ldots, p^{m_1}-1\}\,\mid \,e_i\not\equiv 0\pmod p\}.
\]
For simplicity, we will just write just $Y$ in the sequel.
\end{notation}

The next result enables us to show that a large subfamily of growing GGS-groups are weakly branch.

\begin{lemma}\label{lem:G'}
Let $G$ be a growing GGS-group defined by $\mathbf{e}\in \mathcal{F}$, and suppose there exists $i\in Y$   such that $p^{m_1}-i\notin Y$. Then for all $n$,
    \[
\psi_1(\st_{G_{n-1}}(1)')= G_n'\times \overset{p^{m_n}}\dots\times G_n'. 
\]
\end{lemma}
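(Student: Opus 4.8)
The plan is to prove the inclusion $\psi_1(\st_{G_{n-1}}(1)') \supseteq G_n' \times \overset{p^{m_n}}\dots \times G_n'$, since the reverse inclusion is immediate from the fact that $\psi_1(\st_{G_{n-1}}(1)) \le G_n \times \overset{p^{m_n}}\dots \times G_n$ (as $b_{n-1}$ acts trivially on the first level) together with $\psi_1$ being a homomorphism. Because $\overline{m}$ is arbitrary and the shifted group $G_{n-1}$ is itself a growing GGS-group (with defining vector $\mathbf{e}_{n-1} = (e_1,\ldots,e_{p^{m_1}-1},1,\overset{p^{m_n}-p^{m_1}}\dots,1)$, still in $\mathcal{F}$, and still satisfying the hypothesis that some $i \in Y$ has $p^{m_1} - i \notin Y$ — note $Y$ depends only on the original $\mathbf{e}$), it suffices to treat the case $n = 1$, i.e.\ to show $\psi_1(\st_G(1)') \supseteq G_1' \times \overset{p^{m_1}}\dots \times G_1'$.

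First I would record that $\st_G(1)' $ contains all commutators $[b^{a^r}, b^{a^s}]$ for $r, s \in \{0, \ldots, p^{m_1}-1\}$, and more generally their conjugates by $a$, which cyclically permute the coordinates. Computing $\psi_1([b^{a^r}, b^{a^s}])$ for various choices of $r,s$ produces tuples that are trivial in most coordinates and equal to $[a_1^{e_j}, a_1^{e_k}] = 1$ or, crucially, to a commutator involving $b_1$ in the coordinates where one of $b^{a^r}, b^{a^s}$ has its directed letter $b_1$ and the other has $a_1^{e_j}$. The key computation is: in the coordinate (say coordinate $p^{m_1}$, up to an $a$-shift) where $b$ contributes $b_1$ and $b^{a^{-i}}$ contributes $a_1^{e_i}$, the commutator $[b, b^{a^{-i}}]$ has section $[b_1, a_1^{e_i}] = [b_1, a_1]^{e_i} \cdot (\text{higher terms})$, while in the coordinate where the roles are reversed (coordinate $i$) one gets the section $[a_1^{e_i}, b_1]$, i.e.\ the inverse up to conjugation; all other coordinates are trivial. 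So from a single such commutator I get a tuple that, abelianised in each coordinate, looks like $(\ldots, 1, [a_1,b_1]^{e_i}, 1, \ldots, 1, [a_1,b_1]^{-e_i}, 1, \ldots)$ modulo $G_1''$ or modulo higher terms.

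The hypothesis "$i \in Y$ with $p^{m_1} - i \notin Y$" is exactly what is needed to decouple these two coordinates: by choosing $r, s$ so that one of the two "active" coordinates lands at a position where the other generator contributes $a_1^{e_j}$ with $e_j \equiv 0 \pmod p$, one can arrange a tuple with only a single non-trivial coordinate modulo $\st_G(1)''$ — or, more carefully, one runs the standard GGS-type argument: show first that $\psi_1(\st_G(1)')$ surjects onto each coordinate (this is where $i \in Y$ enters, giving $b_1$ and $a_1^{e_i}$ with $e_i$ a unit, hence $\langle [a_1,b_1], \gamma_3 \rangle = G_1'$ in that coordinate), then show it is a subdirect product that is "full" by exhibiting tuples supported on a single coordinate. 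I would follow the template of Lemma 3.10 (the $\gamma_3$ computation) and of the analogous GGS-group lemmas in \cite{AKT, DFG}: reduce modulo $\gamma_3(G_1) \times \cdots \times \gamma_3(G_1)$ first to get the $G_1'/\gamma_3(G_1) \cong C_{p^{m_1}}$ part in each coordinate, using that the relevant exponent vectors span because of the asymmetry between $i$ and $p^{m_1}-i$; then upgrade by induction along the lower central series, using that $\st_G(1)'$ maps onto $\gamma_k(G_1)$ in each coordinate once it maps onto $\gamma_{k-1}(G_1)$, via commutating the single-coordinate tuples already obtained with suitable elements of $\st_G(1)$ whose sections generate $G_1$ (here $\sigma$-fractality from Lemma~\ref{lem:fractal} is used).

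The main obstacle I anticipate is the step showing the image is a \emph{full} direct product rather than merely a subdirect one — i.e.\ producing tuples supported on a single coordinate. Without the hypothesis on $i$ and $p^{m_1}-i$, the commutators $[b^{a^r}, b^{a^s}]$ always seem to come in "dipole" pairs (non-trivial in two coordinates with opposite $b_1$-exponent contributions, as in the vector $\mathbf{v}$ appearing in the proof of Lemma~\ref{lem:first-over-derived}), and one cannot separate them; the asymmetry $p^{m_1} - i \notin Y$ is precisely the device that breaks the dipole, because then the "return" coordinate $p^{m_1}-i$ carries $a_1^{e_{p^{m_1}-i}}$ with $e_{p^{m_1}-i} \equiv 0 \bmod p$, killing that half of the dipole modulo higher terms of the lower central series. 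Making this separation rigorous — tracking exactly which power of $p$ is needed and confirming the single-coordinate tuples generate all of $G_1'$ and not just a proper subgroup — will be the technical heart of the argument, but it is a finite linear-algebra-over-$\mathbb{Z}/p^{m_1}\mathbb{Z}$ computation of the same flavour as the one already carried out in Lemma~\ref{lem:first-over-derived}.
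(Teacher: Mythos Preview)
Your intuition is correct: the hypothesis is there to break the two-coordinate ``dipole'' in $\psi_1([b, b^{a^i}])$, and the reduction to $n=1$ together with the easy inclusion are as you describe. However, the mechanism you propose --- reduce modulo $\gamma_3(G_1)\times\cdots\times\gamma_3(G_1)$, span by linear algebra, then ``upgrade by induction along the lower central series'' --- does not close. The group $G_1$ is not nilpotent, so that induction never terminates; you would only ever obtain tuples that are single-coordinate modulo $\gamma_k(G_1)\times\cdots\times\gamma_k(G_1)$ for each $k$, and passing from that to an \emph{exact} single-coordinate element would require something like residual nilpotence plus closedness of the image, which is neither established here nor needed. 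Even your first step is not as immediate as you suggest: modulo $\gamma_3(G_1)$ the unwanted coordinate is $[b_1, a_1^{\lambda p^d}] \equiv [a_1,b_1]^{-\lambda p^d}$, which is nonzero in $G_1'/\gamma_3(G_1)\cong C_{p^{m_2}}$ whenever $d<m_2$; the dipole is attenuated, not killed.

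The paper bypasses all of this by constructing an exact single-coordinate element directly. Writing $e_i=k$ (a unit) and $e_{p^{m_1}-i}=\lambda p^d$ with $d\ge 1$, one forms a telescoping product of conjugated commutators of the shape $[b^{\mu^r\lambda^r p^{rd}}, (b^{a^i})^{\mu^{r'}\lambda^{r'} p^{r'd}}]^{a^{-ji}}$, arranged so that each successive factor cancels the previous unwanted coordinate and replaces it by a new one carrying an extra factor of $p^d$ in the $a_1$-exponent. After finitely many steps --- the minimal $\ell$ with $(\ell+1)d\ge m_2$ --- that exponent reaches the order of $a_1$, the final unwanted term is literally trivial, and one is left with $(1,\ldots,1,[a_1^{k},b_1],1,\ldots,1)\in\psi_1(\st_G(1)')$. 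Strong $\sigma$-fractality and spherical transitivity then give the full $G_1'$ in every coordinate. The key idea you were missing is to exploit the finite order $p^{m_2}$ of $a_1$ to make the unwanted term vanish exactly, rather than pushing it down a non-terminating filtration.
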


\begin{proof}
Since $\overline{m}$ is arbitrary and $G$ is $\sigma$-fractal,  it suffices to consider the case $n=1$. Clearly $\psi_1(\st_{G}(1)')\le G_1'\times \overset{p^{m_1}}\dots\times G_1'$. As~$G$ is spherically transitive, it suffices to show that $1\times\overset{j}\dots \times 1\times G_1'\times 1\times\dots \times 1\le \psi_1(\st_{G}(1)')$ for some $j\in\{0,\ldots,p^{m_1}-1\}$.
To this end, we consider
\begin{align*}
    \psi_1([b, b^{a^i}])&=(1,\overset{i-1}\ldots, 1,[a_1^{\, e_{i}},b_1],1,\ldots,1,[b_1,a_1^{\, e_{p^{m_1}-i}}]).
\end{align*}
If $e_{p^{m_1}-i}=0$, we are done. So we suppose that $e_{p^{m_1}-i}\ne 0$. 
Write $k:=e_i$ and let $\mu\in\{1,\ldots, p^{m_2}-1\}$ be such that $k\mu\equiv 1 \pmod {p^{m_2}}$. Further, let $e_{p^{m_1}-i}=\lambda p^d$, for some $d\in\{1,\ldots,m_2-1\}$ with $\lambda\not\equiv 0 \pmod p$. 
Then, as
\begin{align*}
    \psi_1\big([b, b^{a^i}]\big)&=(1,\overset{i-1}\ldots, 1,[a_1^{\,k},b_1],1,\overset{p^{m_1}-i-1}\ldots,1,[b_1,a_1^{\,\lambda p^d}]),\\
     \psi_1\big([b^{\mu\lambda p^d}, b^{a^i}]^{a^{-i}}\big)&=(1,
     \ldots,1,[b_1^{\,\mu\lambda p^d},a_1^{\,\lambda p^d}],1,\overset{i-1}\ldots,1,[a_1^{\,\lambda p^d},b_1]),\\
     \psi_1\big([b^{\mu\lambda p^d}, (b^{a^i})^{\mu\lambda p^d}]^{a^{-2i}}\big)&= (1,
     \ldots,1,[b_1^{\,\mu\lambda p^d},a_1^{\,\mu\lambda^2 p^{2d}}],1,\overset{i-1}\ldots,1,[a_1^{\,\lambda p^d},b_1^{\,\mu\lambda p^d}],1,\overset{i-1}\ldots,1),\\
       \psi_1\big([b^{\mu^2\lambda^2 p^{2d}}, (b^{a^i})^{\mu\lambda p^d}]^{a^{-3i}}\big)&= (1, \ldots,1,[b_1^{\,\mu^2\lambda^2 p^{2d}},a_1^{\,\mu\lambda^2 p^{2d}}],1,\overset{i-1}\ldots,1,[a_1^{\,\mu\lambda^2 p^{2d}},b_1^{\,\mu\lambda p^{d}}],1,\overset{2i-1}\ldots,1),\\
     &\,\,\,\vdots\\
       \psi_1\big([b^{\mu^{\ell}\lambda^{\ell} p^{\ell d}}, (b^{a^i})^{\mu^{\ell}\lambda^{\ell} p^{\ell d}}]^{a^{-2\ell i}}\big)&=(1, \ldots,1,[b_1^{\,\mu^\ell\lambda^{\ell} p^{\ell d}},a_1^{\,\mu^{\ell}\lambda^{\ell+1} p^{(\ell+1) d}}],1,\overset{i-1}\ldots,1,\\
       &\qquad\qquad\qquad\qquad\qquad\qquad  [a_1^{\,\mu^{\ell -1}\lambda^\ell p^{\ell d}},b_1^{\,\mu^\ell\lambda^\ell p^{\ell d}}],1,\overset{(2\ell-1) i-1}\ldots,1),
       \end{align*}
 where $\ell$ is the minimal integer such that $m_2$ divides $(\ell+1) d$, we see that
\begin{align*}
   &\psi_1\big([b, b^{a^i}]\cdot [b^{\mu\lambda p^d}, b^{a^i}]^{a^{-i}} \cdot [b^{\mu\lambda p^d}, (b^{a^i})^{\mu\lambda p^d}]^{a^{-2i}}  \cdots  [b^{\mu^{\ell}\lambda^{\ell} p^{\ell d}}, (b^{a^i})^{\mu^{\ell}\lambda^{\ell} p^{\ell d}}]^{a^{-2\ell i}}\big)=\\ &\qquad\qquad\qquad\qquad\qquad\qquad\qquad\qquad\qquad\qquad\qquad\qquad\qquad\qquad (1,\overset{i-1}\ldots, 1,[a_1^{\, k},b_1],1,\ldots,1).
\end{align*}
The result then follows. 
\end{proof}

\begin{theorem}
Let $G$ be a growing GGS-group, defined by $\mathbf{e}\in\mathcal{F}$. Then 
for all $n$,
    \[
\psi_1(G_{n-1}'')\ge G_n''\times \overset{p^{m_n}}\dots\times G_n''. \]
In particular, the group $G$ is weakly branch. 
\end{theorem}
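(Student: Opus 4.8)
The plan is to reduce to the first level and then argue directly. Since each shifted group $G_{n-1}$ is itself a growing GGS-group with defining vector in $\mathcal{F}$, and the branching sequence is arbitrary, it suffices to prove $\psi_1(G'')\ge G_1''\times\overset{p^{m_1}}\dots\times G_1''$ for an arbitrary such $G$. Moreover, as $G$ acts spherically transitively on the first level by Lemma~\ref{lem:fractal} and $G''\trianglelefteq G$, it is enough to exhibit a single coordinate $c$ in which all of $G_1''$ is realised while the other coordinates stay trivial; conjugating by powers of $a$ (which cyclically permutes the coordinates) and multiplying then yields the full direct product inside $\psi_1(G'')$.

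If $|Y|=1$, say $Y=\{i_0\}$, then $p^{m_1}-i_0\neq i_0$ because $p$ is odd, so $p^{m_1}-i_0\notin Y$ and Lemma~\ref{lem:G'} gives $\psi_1(\st_G(1)')=G_1'\times\overset{p^{m_1}}\dots\times G_1'$. Since $\st_G(1)'\le G'$ we have $\st_G(1)''\le G''$, so $\psi_1(G'')\supseteq\psi_1(\st_G(1)'')=[\psi_1(\st_G(1)'),\psi_1(\st_G(1)')]=G_1''\times\overset{p^{m_1}}\dots\times G_1''$, settling this case.

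Now assume $|Y|\ge 2$ and fix distinct $i,j\in Y$. Choose integers $\mu,\nu$ with $\mu e_i\equiv 1$ and $\nu e_j\equiv 1\pmod{p^{m_2}}$, possible since $e_i,e_j$ are coprime to $p$. Using the computation of $\psi_1([b,b^{a^k}])$ from the proof of Lemma~\ref{lem:G'} one obtains
\[
\psi_1([b^{\mu},b^{a^i}])=(1,\overset{i-1}\dots,1,[a_1,b_1],1,\dots,1,[b_1^{\mu},a_1^{\,e_{p^{m_1}-i}}]),
\]
the only possibly-nontrivial entries being the ``clean'' entry $[a_1,b_1]$ in coordinate $i$ and $[b_1^{\mu},a_1^{\,e_{p^{m_1}-i}}]$ in coordinate $p^{m_1}$; likewise for $[b^{\nu},b^{a^j}]$, whose clean entry $[a_1,b_1]$ sits in coordinate $j$. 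Pick $t$ arbitrary and set $s=t+i-j$, so that after conjugating the first element by $a^{t}$ and the second by $a^{s}$ their clean coordinates coincide, $c:=i+t=j+s$, while the remaining (dirty) coordinates $t$ and $s$ are distinct from each other and from $c$. Hence for all $w,w'\in\st_G(1)$ the element
\[
\big[\,([b^{\mu},b^{a^i}])^{a^{t}w},\ ([b^{\nu},b^{a^j}])^{a^{s}w'}\,\big]\in\st_G(1)''\le G''
\]
has $\psi_1$-image supported only in coordinate $c$, where it equals $[[a_1,b_1]^{g_1},[a_1,b_1]^{g_2}]$ with $g_1,g_2$ the $c$-th sections of $w,w'$; by strong $\sigma$-fractality (Lemma~\ref{lem:fractal}) these range independently over all of $G_1$. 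Taking products over $g_1,g_2$ places $1\times\dots\times N\times\dots\times 1$ inside $\psi_1(G'')$, where $N=\langle[[a_1,b_1]^{g_1},[a_1,b_1]^{g_2}]\mid g_1,g_2\in G_1\rangle$. Since $G_1/\langle[a_1,b_1]\rangle^{G_1}$ is abelian we have $\langle[a_1,b_1]\rangle^{G_1}=G_1'$, and then a routine commutator-calculus argument — expanding $[u,v]$ for $u,v\in G_1'$ written as products of conjugates of $[a_1,b_1]$, and using that $N$ contains every $[[a_1,b_1]^{g_1},[a_1,b_1]^{g_2}]$ — shows $N=G_1''$. Together with the first paragraph this gives $\psi_1(G'')\ge G_1''\times\overset{p^{m_1}}\dots\times G_1''$.

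I expect the only delicate points to be the support bookkeeping in the displayed double commutator — verifying that the two shifted elements overlap in exactly the clean coordinate and nowhere else — and the final identification $N=G_1''$; the reductions and the case $|Y|=1$ are immediate given Lemma~\ref{lem:G'}. Finally, for the weakly branch assertion: applying the inclusion just proved to every $G_{n-1}$ and iterating shows that for each vertex $v$ at level $n$ the map $\varphi_v$ sends $\rist_G(v)$ onto a subgroup containing $G_n''$; since $G_n''\neq 1$ (growing GGS-groups are not metabelian, which is easily checked), every $\rist_G(n)$ is nontrivial, and together with spherical transitivity this shows that $G$ is weakly branch.
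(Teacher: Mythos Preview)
Your proof is correct and follows essentially the same strategy as the paper: reduce to level one, then use strong $\sigma$-fractality to realise double commutators $[[a_1,b_1]^{g_1},[a_1,b_1]^{g_2}]$ in a single coordinate and identify the subgroup they generate with $G_1''$. The only differences are cosmetic --- the paper splits into the case ``$\exists\, i\in Y$ with $p^{m_1}-i\notin Y$'' (where Lemma~\ref{lem:G'} already gives $G_1'\times\cdots\times G_1'$) versus its complement, and in the latter uses the pair $[b,b^{a^i}]$, $[b^{a^i},b^{a^{2i}}]$ with a single index~$i$ (supports $\{i,p^{m_1}\}$ and $\{i,2i\}$, meeting only at~$i$) rather than your two distinct indices $i,j$ with shifts --- but the mechanism is identical.
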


\begin{proof} As usual, since $\overline{m}$ is arbitrary, it suffices to prove the result for $n=1$.

We follow the proof of \cite[Thm.~1(i)]{DFG}.
First 
we suppose that there exists an $i\in Y$ such that $p^{m_1}-i\notin Y$. The result then follows from Lemma~\ref{lem:G'}.
So suppose that for all $i\in Y$ we have $p^{m_1}-i\in Y$. We fix some $i\in Y$. 

Next, observe that $\text{St}_G(1)=\text{st}_G(x)$ for any $x\in X_1$. Then from Lemma~\ref{lem:fractal}, for any $g_1,g_2\in G_1$, there exist $h_1,h_2\in \text{St}_G(1)$ such that
\begin{align*}
    \psi_1(h_1)&=(*,\overset{i-1}\ldots,*,g_1,*,\ldots,*)\\
    \psi_1(h_2)&=(*,\overset{i-1}\ldots,*,g_2,*,\ldots,*),
\end{align*}
where $*$ are unspecified elements. One computes that
\[
\psi_1\Big(\big[[b,b^{a^i}]^{h_1},[b^{a^i},b^{a^{2i}}]^{h_2}\big]\Big)=(1,\overset{i-1}\ldots,1,\big[[a_1^{e_i},b_1]^{g_1},[b_1,a_1^{e_{p^{m_1}-i}}]^{g_2}\big],1,\ldots,1).
\]
As 
\[
G_1'=\langle [a_1^{\,e_i},b_1]^{g_1}\mid g_1\in G_1\rangle=\langle [b_1,a_1^{\,e_{p^{m_1}-i}} ]^{g_2}\mid g_2\in G_1\rangle,
\]
the result follows.
\end{proof}

\begin{lemma}\label{lem:branch_gamma_3}
Let $G$ be a growing GGS-group defined by a non-constant $\mathbf{e}\in \mathcal{F}$ and let $i\in Y$. 
If there exists $k$ 
such that $e_k^2-e_{k-i}e_{k+i}\not\equiv 0 \pmod p$, then
for all~$n$,
    \[
\psi_1\big(\gamma_3(\st_{G_{n-1}}(1))\big)= \gamma_3(G_n)\times \overset{p^{m_n}}\dots\times \gamma_3(G_n). \]
\end{lemma}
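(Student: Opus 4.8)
The plan is to reduce to the case $n=1$ (using that $\overline{m}$ is arbitrary and that $G$ is $\sigma$-fractal by Lemma~\ref{lem:fractal}, which applies since $\mathbf{e}\in\mathcal{F}$), and then to show that $\psi_1\big(\gamma_3(\st_G(1))\big)$ contains the ``full-support-in-one-coordinate'' subgroups $1\times\cdots\times\gamma_3(G_1)\times\cdots\times 1$; spherical transitivity of $G$ then promotes this to the full direct product, and the reverse inclusion $\psi_1\big(\gamma_3(\st_G(1))\big)\le\gamma_3(G_1)\times\overset{p^{m_1}}\cdots\times\gamma_3(G_1)$ is immediate from $\psi_1(\st_G(1))\le G_1\times\cdots\times G_1$ together with naturality of $\gamma_3$. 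So the heart of the argument is the single inclusion $1\times\overset{j}\cdots\times 1\times\gamma_3(G_1)\times 1\times\cdots\times 1\le\psi_1\big(\gamma_3(\st_G(1))\big)$ for some $j$.

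To obtain this I would work in the quotient by $\gamma_4(G_1)\times\overset{p^{m_1}}\cdots\times\gamma_4(G_1)$, exactly as in the proof of Lemma~\ref{lem:super-strongly-fractal}, where the relevant abelian group $\gamma_3(G_1)/\gamma_4(G_1)$ is (essentially) spanned by the images of $[a_1,b_1,a_1]$ and $[a_1,b_1,b_1]$. Using $\st_G(1)=\langle b,b^a,\ldots,b^{a^{p^{m_1}-1}}\rangle$, the group $\psi_1\big(\gamma_3(\st_G(1))\big)$ is normally generated by the images of the elements $[b^{a^r},b^{a^{r+i}},b^{a^{r+j}}]$; by spherical transitivity it suffices to analyse the coordinate-block supported starting at position $r$, where (writing $e_k$ at position $k$ via $\varphi$) the $b$-letters contribute $a_1^{e_k}$-type entries. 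Computing $\psi_1$ of the basic triple commutators $[b,b^{a^i},b]$, $[b,b^{a^i},b^{a^i}]$, and $[b,b^{a^i},b^{a^j}]$ (as already done in Lemma~\ref{lem:super-strongly-fractal} for the constant case, and more generally here) one reads off, modulo $\gamma_4$, linear combinations of $[a_1,b_1,a_1]$ and $[a_1,b_1,b_1]$ whose coefficients are built from the products $e_k^2$, $e_{k-i}e_{k+i}$, etc. The point of the hypothesis $e_k^2-e_{k-i}e_{k+i}\not\equiv 0\pmod p$ is precisely that it makes the relevant $2\times 2$ coefficient determinant a unit mod $p$, so that from the available generators one can solve for both $([a_1,b_1,a_1],1,\ldots,1)$ and $([a_1,b_1,b_1],1,\ldots,1)$ in a single coordinate (possibly after translating by a power of $a$ to move the nonzero coordinate to a fixed position and after inverting $e_i$ mod $p^{m_2}$ as in Lemma~\ref{lem:G'}); this yields $1\times\cdots\times\gamma_3(G_1)\times\cdots\times 1$ modulo $\gamma_4(G_1)$ in that block. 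One then removes the modulo-$\gamma_4$ caveat by a standard descent: since $\gamma_4(G_1)=[\gamma_3(G_1),G_1]$ and we now control $\gamma_3(G_1)$ in one coordinate modulo $\gamma_4(G_1)$, conjugating by suitable $h\in\st_G(1)$ with prescribed first-level sections (available by Lemma~\ref{lem:fractal}, as in the weakly branch proof above) shows the coordinatewise subgroup is closed under commutation with $G_1$ in that coordinate, hence contains all of $\gamma_3(G_1)$ there.

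The main obstacle I expect is the bookkeeping in the linear-algebra step: one must exhibit, among the triple commutators $[b^{a^r},b^{a^{r+i}},b^{a^{r+j}}]$ available in $\gamma_3(\st_G(1))$, a collection whose $\psi_1$-images have $(\,[a_1,b_1,a_1],[a_1,b_1,b_1]\,)$-coordinates in a single fixed block forming a matrix of rank $2$ over $\mathbb{F}_p$, and the entries of that matrix are quadratic expressions in the $e_k$'s (of the shape $2e_k$, $-e_k$, products like $e_{k-i}e_{k+i}$), so identifying the precise combination that has determinant $\pm(e_k^2-e_{k-i}e_{k+i})$ up to a unit requires care — this is where the specific indexing $k,k\pm i$ in the hypothesis comes from, and where an explicit but slightly delicate computation is unavoidable. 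The reduction to $n=1$, the passage from one coordinate to the full product via transitivity, and the modulo-$\gamma_4$ descent are all routine and parallel to arguments already carried out in Lemmas~\ref{lem:G'} and~\ref{lem:super-strongly-fractal} and the weakly branch theorem above.
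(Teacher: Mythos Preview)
Your overall plan (reduce to $n=1$, prove a single-coordinate inclusion, use spherical transitivity) matches the paper. But the core mechanism is different, and your ``standard descent'' from the modulo-$\gamma_4$ statement to the exact statement has a genuine gap.

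The paper never works modulo $\gamma_4(G_1)$. Instead it builds, directly in $\st_G(1)$, an element
\[
g_k=(b^{a^{p^{m_1}-k+1}})^{e_k}(b^{a^{p^{m_1}-k-i+1}})^{-e_{k-i}}
\]
whose $\psi_1$-image has $a_1^{\,e_k^2-e_{k-i}e_{k+i}}$ in position~$1$ and \emph{exactly} $1$ in position $p^{m_1}-i+1$; the hypothesis makes the first entry a unit power of $a_1$. Similarly one writes down an element with $b_1 a_1^{\,*}$ in position~$1$ and $1$ in position $p^{m_1}-i+1$. Since $[\,(b^{a^{-(i-1)}})^\mu,\,b^a\,]$ has support precisely at positions $1$ and $p^{m_1}-i+1$, commutating it with these elements kills the second coordinate on the nose and yields $([a_1,b_1,a_1],1,\ldots,1)$ and $([a_1,b_1,b_1 a_1^{\,*}],1,\ldots,1)$ exactly. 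No descent is needed.

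Your route via triple commutators $[b^{a^r},b^{a^{r+i}},b^{a^{r+j}}]$ can indeed be pushed to give a single-coordinate element modulo $\gamma_4(G_1)^{p^{m_1}}$ (e.g.\ the combination $[b^{a^r},b^{a^{r+i}},b^{a^{r-k}}]^{e_{k-i}}[b^{a^r},b^{a^{r+i}},b^{a^{r-k+i}}]^{-e_k}$ has position-$r$ entry trivial mod $\gamma_4$ and position-$(r+i)$ entry $[a_1,b_1,a_1]^{-e_i(e_k^2-e_{k-i}e_{k+i})}$). But the descent you sketch does not close. From $L\cdot\gamma_4(G_1)^{p^{m_1}}=\gamma_3(G_1)^{p^{m_1}}$ and normality of $L$ in the subdirect product $\psi_1(\st_G(1))$ you only get $L\cdot\gamma_n(G_1)^{p^{m_1}}=\gamma_3(G_1)^{p^{m_1}}$ for all $n$; passing to $L=\gamma_3(G_1)^{p^{m_1}}$ would require $L$ (equivalently $\gamma_3(\st_G(1))$) to be closed in the pro-nilpotent topology, which you have not established and which is not ``standard''. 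Strong $\sigma$-fractality lets you prescribe one section of a conjugator, not force the others to be trivial, so you cannot simply commutate away the $\gamma_4$-garbage in the second coordinate. The paper's trick of engineering an exact $1$ in the unwanted coordinate of the \emph{third} commutator argument is precisely what circumvents this.
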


\begin{proof}
As usual, since $\overline{m}$ is arbitrary and $G$ is $\sigma$-fractal,  it suffices to consider the case $n=1$. Clearly $\psi_1\big(\gamma_3(\st_G(1))\big)\le \gamma_3(G_1)\times \overset{p^{m_1}}\dots\times \gamma_3(G_1)$, 
so as before it suffices to show that $1\times\overset{j}\dots \times 1\times \gamma_3(G_1)\times 1\times\dots \times 1\le \psi_1\big(\gamma_3(\st_G(1))\big)$ for some $j\in\{0,\ldots,p^{m_1}-1\}$. We proceed as in \cite[Lem.~3.2]{FAZR2}.

Let $i\in Y$. 
Setting
\[
g_k=(b^{a^{p^{m_1}-k+1}})^{e_k}(b^{a^{p^{m_1}-k-i+1}})^{-e_{k-i}},
\]
we have
\begin{align*}
    \psi_1(g_k)&=(a_1^{\,e_k^{\,2}-e_{k-i}e_{k+i}},*,\ldots,*, a_1^{\,e_{k-i}e_k-e_ke_{k-i}},*,\overset{i-1}\ldots,*)\\
    &=(a_1^{\,e_k^{\,2}-e_{k-i}e_{k+i}},*,\ldots,*, 1,*,\overset{i-1}\ldots,*),
\end{align*}
where $*$ represents unspecified elements. Now we can find a suitable power~$g$ of~$g_k$ such that
\[
\psi_1(g)=(a_1,*,\ldots,*, 1,*,\overset{i-1}\ldots,*).
\]
Let $\mu\in\{1,\ldots,p^{m_2}-1\}
$ be such that $\mu e_i\equiv 1 \pmod {p^{m_2}}$. Then
\[
\psi_1\big(b^a(b^{a^{p^{m_1}-2i+1}})^{-\mu e_{p^{m_1}-i}} \big)= (b_1a_1^{\,-\mu e_{2i}e_{p^{m_1}-i}}
,*,\ldots,*,1,*,\overset{i-1}\ldots,*),
\]
and it is straightforward now to show that $\gamma_3(G_1)\times 1\times \cdots \times 1\le \psi_1\big(\gamma_3(\st_G(1))\big)$,
as \begin{align*}
\psi_1\big([(b^{a^{-(i-1)}})^\mu, b^a,g]\big)&= ([a_1,b_1,a_1]
,1,\ldots,1),\\
\psi_1\big([(b^{a^{-(i-1)}})^\mu, b^a,b^a(b^{a^{p^{m_1}-2i+1}})^{-\mu e_{p^{m_1}-i}} ]\big)&= ([a_1,b_1,b_1a_1^{\,-\mu e_{2i}e_{p^{m_1}-i}}
]
,1,\ldots,1).\qedhere
\end{align*}
\end{proof}

\begin{lemma}\label{lem:branching-subgroup}
Let $G$ be a growing GGS-group satisfying the conditions given in
Lemma~\ref{lem:branch_gamma_3}. If further $e_i\not\equiv e_{p^{m_1}-i} \pmod p$, then for all $n$,
    \[
\psi_1(\st_{G_{n-1}}(1)')= G_n'\times \overset{p^{m_n}}\dots\times G_n'. 
\]
\end{lemma}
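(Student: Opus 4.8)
The plan is to follow the same reduction as in the preceding branching lemmas: since $\overline{m}$ is arbitrary and $G$ is $\sigma$-fractal, it suffices to prove the case $n=1$, namely $\psi_1(\st_G(1)')=G_1'\times\overset{p^{m_1}}\dots\times G_1'$. The inclusion ``$\subseteq$'' is immediate from strong $\sigma$-fractality, since $\varphi_x(\st_G(1)')=\varphi_x(\st_G(1))'=G_1'$ for each $x\in X_1$, so only the reverse inclusion requires work. The decisive input here is Lemma~\ref{lem:branch_gamma_3}, which gives that $\gamma_3(G_1)\times\overset{p^{m_1}}\dots\times\gamma_3(G_1)=\psi_1(\gamma_3(\st_G(1)))$ is already contained in $\psi_1(\st_G(1)')$, using $\gamma_3(\st_G(1))\le\st_G(1)'$. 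It therefore remains to show that $\psi_1(\st_G(1)')$ surjects onto the quotient $\big(G_1'/\gamma_3(G_1)\big)\times\overset{p^{m_1}}\dots\times\big(G_1'/\gamma_3(G_1)\big)$. By Lemma~\ref{lem:first-over-derived} applied to the growing GGS-group $G_1$, this quotient is isomorphic to $(\mathbb{Z}/p^{m_2}\mathbb{Z})^{p^{m_1}}$, each coordinate generated by the class of $[a_1,b_1]$; so I would work additively in $(\mathbb{Z}/p^{m_2}\mathbb{Z})^{p^{m_1}}$, with the $j$th coordinate recording the exponent of $[a_1,b_1]$ modulo $\gamma_3(G_1)$.

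Next I would identify the vectors available. Fix the index $i\in Y$ supplied by Lemma~\ref{lem:branch_gamma_3}. For each $s$ the commutator $[b^{a^s},b^{a^{s+i}}]=[b,b^{a^i}]^{a^s}$ lies in $\st_G(1)'$ (which is normal in $G$), so its image lies in the image of $\st_G(1)'$. From the identity $\psi_1([b,b^{a^i}])=(1,\overset{i-1}\dots,1,[a_1^{\,e_i},b_1],1,\dots,1,[b_1,a_1^{\,e_{p^{m_1}-i}}])$ recorded above, together with $[a_1^{\,e},b_1]\equiv[a_1,b_1]^e\pmod{\gamma_3(G_1)}$ and the fact that conjugation by $a$ shifts the coordinates cyclically, the image of $[b^{a^s},b^{a^{s+i}}]$ is the vector $e_i\,\mathbf{u}_{s+i}-e_{p^{m_1}-i}\,\mathbf{u}_s$, where $\mathbf{u}_j$ is the $j$th standard basis vector and indices are read modulo $p^{m_1}$. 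The task thus comes down to showing that these vectors, as $s$ ranges over $\mathbb{Z}/p^{m_1}\mathbb{Z}$, already span $(\mathbb{Z}/p^{m_2}\mathbb{Z})^{p^{m_1}}$.

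This spanning claim is the crux, and the step I expect to cause the main difficulty, as it is precisely where the growing-degree setting departs from the classical prime-degree GGS-groups. Since the target is a finite abelian $p$-group, it suffices to span after reduction modulo $p$, that is, to span $\mathbb{F}_p^{p^{m_1}}$ by the vectors $\bar{e}_i\,\mathbf{u}_{s+i}-\bar{e}_{p^{m_1}-i}\,\mathbf{u}_s$ (bars denoting reduction mod $p$). One cannot telescope directly as when the degree is prime, because $d:=\gcd(i,p^{m_1})$ may exceed $1$. Instead I would split $\mathbb{Z}/p^{m_1}\mathbb{Z}$ into the $d$ cosets of the subgroup $\langle i\rangle$, each of size $\ell=p^{m_1}/d$, observe that $\ell$ is again a power of $p$, and argue on each orbit $O$ separately. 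A linear functional $\phi$ vanishing on all the above vectors supported on $O$ must satisfy $\bar{e}_i\,\phi(\mathbf{u}_{s+i})=\bar{e}_{p^{m_1}-i}\,\phi(\mathbf{u}_s)$ for $s\in O$; as $i\in Y$ forces $\bar{e}_i\ne 0$, this yields $\phi(\mathbf{u}_{s+i})=(\bar{e}_{p^{m_1}-i}/\bar{e}_i)\,\phi(\mathbf{u}_s)$, and running once around the cyclic orbit $O$ of length $\ell$ forces $\big((\bar{e}_{p^{m_1}-i}/\bar{e}_i)^{\ell}-1\big)\phi(\mathbf{u}_s)=0$. The point is that, $\ell$ being a power of $p$, we have $\xi^{\ell}=\xi$ for every $\xi\in\mathbb{F}_p$ by Fermat's little theorem, so $(\bar{e}_{p^{m_1}-i}/\bar{e}_i)^{\ell}-1=(\bar{e}_{p^{m_1}-i}-\bar{e}_i)/\bar{e}_i$ is nonzero exactly because $e_i\not\equiv e_{p^{m_1}-i}\pmod p$; hence $\phi$ vanishes on $O$. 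As $O$ was arbitrary, $\phi=0$, so these vectors span $\mathbb{F}_p^{p^{m_1}}$, and therefore span $(\mathbb{Z}/p^{m_2}\mathbb{Z})^{p^{m_1}}$.

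Assembling the pieces, $\psi_1(\st_G(1)')$ surjects onto $\big(G_1'/\gamma_3(G_1)\big)^{p^{m_1}}$ and contains $\gamma_3(G_1)^{p^{m_1}}$, hence equals $G_1'\times\overset{p^{m_1}}\dots\times G_1'$, which settles $n=1$ and so the lemma. If one prefers the more hands-on style of the earlier lemmas, the linear-algebra step can be replaced by writing down, along a single $\langle i\rangle$-orbit, an explicit product of conjugates of $[b,b^{a^i}]$ whose $\psi_1$-image is, modulo $\gamma_3(G_1)^{p^{m_1}}$, supported in one coordinate and equal there to $[a_1,b_1]^{e_i-e_{p^{m_1}-i}}$; since $e_i-e_{p^{m_1}-i}$ is a unit modulo $p^{m_2}$ and Lemma~\ref{lem:branch_gamma_3} lets one absorb the $\gamma_3(G_1)$-errors in the remaining coordinates, this produces $G_1'$ in that coordinate, after which strong $\sigma$-fractality and spherical transitivity distribute it over all coordinates.
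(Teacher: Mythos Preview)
Your proof is correct and follows essentially the same route as the paper: reduce modulo $\gamma_3(G_1)^{p^{m_1}}$ via Lemma~\ref{lem:branch_gamma_3}, then use the conjugates $[b,b^{a^i}]^{a^s}$ together with $e_i\not\equiv e_{p^{m_1}-i}\pmod p$ and the fact that the $\langle i\rangle$-orbit length in $\mathbb{Z}/p^{m_1}\mathbb{Z}$ is a $p$-power (so Fermat applies). The paper carries out the explicit telescoping product you sketch at the end (after normalising $e_i=1$), whereas you package the same computation as a dual linear-algebra argument; the content is the same.
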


\begin{proof}
As in the previous proof, it suffices to show that $(1,\overset{j}\ldots,1,[a_1,b_1],1,\ldots,1)\in \psi_1(\st_G(1)')$ for some $j\in\{0,\ldots,p^{m_1}-1\}$. To this end, we consider
\begin{align*}
    \psi_1([b, b^{a^i}])&=(1,\overset{i-1}\ldots, 1,[a_1^{\, e_{i}},b_1],1,\ldots,1,[b_1,a_1^{\, e_{p^{m_1}-i}}]).
\end{align*}

In view of Lemma~\ref{lem:G'}, we may 
suppose that  $p^{m_1}-i\in Y$. 
Further by Lemmata~\ref{lem:first-over-derived}  and~\ref{lem:branch_gamma_3}, we may assume without loss of generality that $e_i=1$.
Now write $\mu:=e_{p^{m_1}-i}\not\equiv 1 \pmod p$, and 
suppose $i=\lambda p^{\tau}$ for some $\lambda\not\equiv 0 \pmod p$ and $\tau\in\{0,1,\ldots, m_1-1\}$.
Then, working modulo $\gamma_3(G_1)\times\overset{p^{m_1}}\ldots\times \gamma_3(G_1)$, we obtain
\begin{align*}
    \psi_1\big([b, b^{a^i}]\big)&\equiv (1,\overset{i-1}\ldots, 1,[a_1,b_1],1,\ldots,1,[b_1,a_1]^{\mu})\\
     \psi_1\big(([b, b^{a^i}]^{a^{p^{m_1}-i}})^{\mu}\big)&\equiv (1,\overset{p^{m_1}-i-1}\ldots,1,[b_1,a_1]^{\mu^2},1,\overset{i-1}\ldots, 1,[a_1,b_1]^{\mu})\\
      \psi_1\big(([b, b^{a^i}]^{a^{p^{m_1}-2i}})^{\mu^2}\big)&\equiv (1,\overset{p^{m_1}-2i-1}\ldots,1,[b_1,a_1]^{\mu^3},1,\overset{i-1}\ldots, 1,[a_1,b_1]^{\mu^2},1,\overset{i}\ldots, 1)\\
      &\,\,\,\vdots\\
       \psi_1\big(([b, b^{a^i}]^{a^{i}})^{\mu^{\kappa p^{m_1-\tau}-1}}\big)&\equiv (1,\overset{i-1}\ldots,1,[b_1,a_1]^{\mu^{\kappa p^{m_1-\tau}}},1,\overset{i-1}\ldots, 1,[a_1,b_1]^{\mu^{\kappa p^{m_1-\tau}-1}},1,\overset{p^{m_1}-2i}\ldots, 1)
\end{align*}
where $\kappa\in\mathbb{Z}/p^{m_1}\mathbb{Z}$ is such that $\kappa\lambda\equiv 1 \pmod {p^{m_1}}$, and thus
\begin{align*}
&\psi_1\big( [b, b^{a^i}]([b, b^{a^i}]^{a^{p^{m_1}-i}})^{\mu} ([b, b^{a^i}]^{a^{p^{m_1}-2i}})^{\mu^2}\cdots ([b, b^{a^i}]^{a^{i}})^{\mu^{\kappa p^{m_1-\tau}-1}}\big)\equiv\\
&\qquad\qquad\qquad(1,\overset{i-1}\ldots,1, [b_1,a_1]^{\mu^{\kappa p^{m_1-\tau}-1}}, 1,\ldots, 1) \pmod {\gamma_3(G_1)\times\overset{p^{m_1}}\ldots\times \gamma_3(G_1)}.
\end{align*}
As $\mu\not\equiv 0 \pmod p$, it follows that $\mu^{\kappa p^{m_1-\tau}-1}\not\equiv 0 \pmod p$. The result now follows making use of Lemma~\ref{lem:branch_gamma_3}.
\end{proof}

As seen in the proof of Lemma~\ref{lem:first-over-derived}, we have $\st_G(1)'\le \gamma_3(G)$ for $G$ a growing GGS-group. The following is immediate. 

\begin{lemma}
\label{lem:derived-stab-in-gamma-3}
Let $G$ be a growing GGS-group such that for all $n$,
\[
\psi_1(G_{n-1}')\ge \psi_1(\st_{G_{n-1}}(1)')= G_n'\times \overset{p^{m_n}}\dots\times G_n'.
\]
Then $\psi_1(\gamma_3(G_{n-1}))\ge G_n'\times \overset{p^{m_n}}\dots\times G_n'$  for all $n$.
\end{lemma}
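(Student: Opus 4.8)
The plan is to obtain this as an immediate consequence of the containment $\st_{G_{n-1}}(1)' \le \gamma_3(G_{n-1})$ together with the hypothesis. First I would record that, exactly as in the proof of Lemma~\ref{lem:first-over-derived} but applied to the shifted group $G_{n-1}$ in place of $G$, one has $\st_{G_{n-1}}(1)' \le \gamma_3(G_{n-1})$ for every $n$: since $\st_{G_{n-1}}(1)$ is generated by the conjugates $b_{n-1}^{\,a_{n-1}^{\,j}}$ of $b_{n-1}$, and modulo $\gamma_3(G_{n-1})$ each commutator $[b_{n-1}^{\,a_{n-1}^{\,i}},b_{n-1}^{\,a_{n-1}^{\,j}}]$ collapses to $[b_{n-1},b_{n-1}]=1$, the derived subgroup $\st_{G_{n-1}}(1)'$ lies in $\gamma_3(G_{n-1})$.

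Next I would observe that $\psi_1$ is defined on all of $\gamma_3(G_{n-1})$: indeed $\gamma_3(G_{n-1}) \le G_{n-1}' \le \st_{G_{n-1}}(1)$, the second inclusion because $\st_{G_{n-1}}(1)/G_{n-1}' \cong \mathbb{Z}$ by Lemma~\ref{lem:first-over-derived}. Since the restriction of $\psi_1$ to $\st_{\Aut T}(1)$ is the isomorphism onto the direct product of copies of $\Aut T_1$, it is in particular an injective, order-preserving map on subgroups of $\st_{G_{n-1}}(1)$. Applying it to the chain $\st_{G_{n-1}}(1)' \le \gamma_3(G_{n-1})$ and invoking the hypothesis then gives
\[
\psi_1\big(\gamma_3(G_{n-1})\big) \ge \psi_1\big(\st_{G_{n-1}}(1)'\big) = G_n' \times \overset{p^{m_n}}\dots\times G_n',
\]
which is the claim.

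I expect no genuine obstacle here: the substantive input — that $\psi_1(\st_{G_{n-1}}(1)')$ is the \emph{full} direct product of copies of $G_n'$ rather than merely a subdirect one — has already been built into the hypothesis, so the lemma reduces to the elementary containment $\st_{G_{n-1}}(1)' \le \gamma_3(G_{n-1})$ above together with monotonicity of $\psi_1$.
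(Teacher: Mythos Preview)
Your proposal is correct and is exactly the approach taken in the paper: the paper simply records, just before stating the lemma, that $\st_G(1)'\le\gamma_3(G)$ (as shown in the proof of Lemma~\ref{lem:first-over-derived}) and then declares the lemma ``immediate''. Your write-up merely spells out this immediacy, including the observation that $\gamma_3(G_{n-1})\le\st_{G_{n-1}}(1)$ so that $\psi_1$ applies.
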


For $\mathbf{e}\in\mathcal{F}$, recall the set
\[
Y=Y(\mathbf{e})=\{i\in\{1,\ldots, p^{m_1}-1\}\,\mid \,e_i\not\equiv 0\pmod p\}
\]
and, as in~\cite{DFG}, we define
\[
t(\mathbf{e})=\max\{s\ge 0\,\mid  \,i \equiv 0 \pmod {p^s}\quad\text{for all }i\in Y\}.
\]
Similarly,  we  write  $t=t(\mathbf{e})$ for brevity.
Next, we set
\[
\mathcal{E}=\{\mathbf{e}\in\mathcal{F}\,\mid \, e_{ip^t}\equiv e_{jp^t} \pmod p \quad\text{for all } i,j\in\{1,\ldots, p^{m_1-t}-1\}\}.
\]

\begin{theorem}\label{thm:branching-gamma-3}
Let $G$ be a growing GGS-group defined by $\mathbf{e}\in\mathcal{F}\backslash \mathcal{E}$, and suppose that  $i\in Y$ if and only if $p^{m_1}-i\in Y$. Then 
for all $n$,
    \[
\psi_1\big(\gamma_3(\st_{G_{n-1}}(1))\big)= \gamma_3(G_n)\times \overset{p^{m_n}}\dots\times \gamma_3(G_n). \]
\end{theorem}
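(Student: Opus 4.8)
The plan is to reduce, exactly as in the preceding lemmas, to the case $n=1$, using that $\overline{m}$ is arbitrary and that $G$ is $\sigma$-fractal by Lemma~\ref{lem:fractal}; so it suffices to prove $\psi_1\big(\gamma_3(\st_G(1))\big)=\gamma_3(G_1)\times\overset{p^{m_1}}\dots\times\gamma_3(G_1)$. The inclusion ``$\subseteq$'' is immediate from $\psi_1(\st_G(1))\le G_1\times\cdots\times G_1$ together with the fact that $\psi_1$ is a homomorphism. For ``$\supseteq$'', spherical transitivity of $G$ reduces matters to placing a copy of $\gamma_3(G_1)$ in a single coordinate: once one has an element $(x,1,\dots,1)\in\psi_1\big(\gamma_3(\st_G(1))\big)$ with $x\in\gamma_3(G_1)$, conjugating by $\st_G(1)$ and using $\varphi_1(\st_G(1))=G_1$ spreads $x$ over all of $\gamma_3(G_1)$ in that coordinate, and then spherical transitivity does the rest.

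So everything reduces to verifying the hypothesis of Lemma~\ref{lem:branch_gamma_3} in the present setting: that there exist $i\in Y$ and an \emph{admissible} index $k$, meaning $k\not\equiv 0$, $k-i\not\equiv 0$ and $k+i\not\equiv 0\pmod{p^{m_1}}$ (so that the auxiliary element $g_k$ of that proof has a pure $a_1$-power in its first coordinate and is trivial in the coordinate carrying $b_1$), such that $e_k^2-e_{k-i}e_{k+i}\not\equiv 0\pmod p$. The symmetry hypothesis $i\in Y\iff p^{m_1}-i\in Y$ serves here both to rule out, via Lemma~\ref{lem:G'}, the complementary already-settled case, and to constrain $Y$ in the degenerate analysis below. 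To establish the condition of Lemma~\ref{lem:branch_gamma_3} I would argue by contradiction: assume $e_k^2\equiv e_{k-i}e_{k+i}\pmod p$ for every $i\in Y$ and every admissible $k$. Fix $i_0\in Y$ of minimal $p$-adic valuation $t=t(\mathbf{e})$, write $i_0=ap^t$ with $p\nmid a$, put $N=p^{m_1-t}$, and restrict to the indices divisible by $p^t$: writing $f_j=e_{jp^t}\bmod p$ with $j$ read modulo $N$, the relation becomes $f_j^2\equiv f_{j-a}f_{j+a}$ for all but two values of $j$, and reindexing by multiplication by $a^{-1}$ modulo $N$ turns it into $h_l^2\equiv h_{l-1}h_{l+1}$ for $l\in\{2,\dots,N-2\}$. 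If none of $h_1,\dots,h_{N-1}$ vanishes these relations force $h_l\equiv h_1Q^{l-1}$ for a common ratio $Q\in\mathbb{F}_p^\times$; feeding this back into the relations that involve a wrap-around modulo $p^{m_1}$ (or, equivalently, comparing with a second member of $Y$) forces $Q$ to have $p$-power order in $\mathbb{F}_p^\times$, hence $Q=1$ because $\gcd(p^{m_1-t},p-1)=1$; but then all the $f_j$ are congruent, i.e.\ $\mathbf{e}\in\mathcal{E}$, a contradiction. If instead some $h_l$ vanishes, the relation propagates the zero and forces $f_j\equiv 0$ for every $j$, contradicting $\mathbf{e}\in\mathcal{F}$. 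A short list of extremal configurations of $Y$ escapes both horns of this dichotomy — the prominent one being $Y=\{i_0,p^{m_1}-i_0\}$ with $p^{m_1-t}$ small — and for these I would argue directly, exhibiting explicit weight-three commutators in $b,b^a,b^{a^2},\dots$ whose $\psi_1$-images together with conjugation by $\st_G(1)$ generate a coordinate copy of $\gamma_3(G_1)$; here $\mathbf{e}\notin\mathcal{E}$ is exactly what makes the relevant exponents units modulo $p$. This last part follows the corresponding argument in~\cite{DFG}. Once Lemma~\ref{lem:branch_gamma_3} is seen to apply, it yields $1\times\cdots\times\gamma_3(G_1)\times\cdots\times 1\le\psi_1\big(\gamma_3(\st_G(1))\big)$, and the reduction of the first paragraph finishes the proof.

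The main obstacle I anticipate is not any single computation but the interplay of two things: the bookkeeping around the unique $\psi_1$-coordinate carrying $b_1$, which is what renders certain indices $k$ inadmissible and thereby spawns the extremal cases lying outside the scope of Lemma~\ref{lem:branch_gamma_3}; and making rigorous the step that a ``locally multiplicative'' residue sequence over $\mathbb{Z}/p^{m_1-t}\mathbb{Z}$ must be constant, which is elementary but fiddly, since one controls the defining relation only away from a couple of indices and must combine the information coming from different members of $Y$ (it is precisely the odd prime power $p^{m_1}$, with $\gcd(p^{m_1},p-1)=1$, that closes the argument).
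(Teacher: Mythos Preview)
Your reduction to $n=1$ and the easy inclusion are fine, and so is the plan of placing a copy of $\gamma_3(G_1)$ in a single coordinate. The gap is in the attempt to reduce everything to Lemma~\ref{lem:branch_gamma_3}. You assume $e_k^2\equiv e_{k-i}e_{k+i}\pmod p$ for every $i\in Y$ and admissible $k$, derive the geometric progression $h_l\equiv h_1Q^{l-1}$, and then assert that wrap-around relations, or a second member of $Y$, force $Q$ to have $p$-power order in $\mathbb{F}_p^\times$. Neither source of constraint exists. There is no wrap-around: the indices $k-i,k,k+i$ in Lemma~\ref{lem:branch_gamma_3} must all lie in $\{1,\dots,p^{m_1}-1\}$, and nothing is read modulo $p^{m_1}$. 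And for any other $i'\in Y$ the corresponding relation $h_l^2\equiv h_{l-s}h_{l+s}$ (for the appropriate step $s$) is automatically satisfied by \emph{any} geometric progression, so it imposes no new condition on $Q$. Hence $Q$ may be an arbitrary nonidentity element of $\mathbb{F}_p^\times$, and the hypothesis of Lemma~\ref{lem:branch_gamma_3} genuinely fails.

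This situation is not extremal in your sense: it is precisely the case $Y=\{p^t,2p^t,\dots,p^{m_1}-p^t\}$ with $e_{np^t}\equiv\lambda^{n-1}e_{p^t}\pmod p$ and $\lambda\not\equiv 1$, which is Case~1 of the paper's proof. The paper does not try to recover Lemma~\ref{lem:branch_gamma_3} here; instead it constructs explicit triple commutators in $\st_G(1)$ whose $\psi_1$-image is supported in a single coordinate, using an auxiliary element with unit $a_1$-exponent coming from $e_{p^t}e_{p^{m_1}-3p^t}-e_{p^{m_1}-p^t}^2\equiv\lambda^{p^{m_1-t}-4}e_{p^t}^2(1-\lambda)\not\equiv 0\pmod p$ (so the relevant nonvanishing is $1-\lambda$, not any $p$-power-order phenomenon). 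When $Y$ is a proper subset (Case~2), the paper uses a different telescoping product of triple commutators, exploiting that some $e_{hp^t}\equiv 0\pmod p$ makes the associated $a_1$-power nilpotent. Your zero-propagation argument in that case is also incomplete: the recurrence $h_l^2\equiv h_{l-1}h_{l+1}$ holds only at interior indices, so $h_1$ and $h_{N-1}$ are never forced to vanish, and no contradiction with $\mathbf{e}\in\mathcal{F}$ follows.
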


\begin{proof}
Let $i\in Y$. 
By Lemma~\ref{lem:branch_gamma_3}, we may assume that  $e_k^2-e_{k-i}e_{k+i}\equiv 0 \pmod p$ for all~$k$ such that the terms $e_{k-i},e_k,e_{k+i}$ are defined.

\underline{Case 1:} Suppose $Y=\{p^t,2p^t,\ldots,p^{m_1}-p^t\}$. We proceed as in \cite[Thm.~2.11]{DFG}, by first observing that the  statement is true if $t=m_1-1$ and $p=3$. In this case, since $\mathbf{e}\notin\mathcal{E}$, we have either $e_{3^{m_1-1}}\equiv 1 \pmod 3$ and $e_{2\cdot 3^{m_1-1}}\equiv 2 \pmod 3$, or vice versa. We will assume that  $e_{3^{m_1-1}}\equiv 1 \pmod 3$ and $e_{2\cdot 3^{m_1-1}}\equiv 2 \pmod 3$; the other case is similar. For convenience, write $\alpha:=e_{ 3^{m_1-1}}$ and $\beta:=e_{2\cdot 3^{m_1-1}}$. By abuse of notation, let $\alpha^{-1},\beta^{-1}\in\{1,\ldots,3^{m_2}-1\}$ be the respective inverses of $\alpha$ and $\beta$ modulo~$3^{m_2}$. Consider
\begin{align*}
    \psi_1\big([b^{a^{2\cdot 3^{m_1-1}}},b, b(b^{a^{ 3^{m_1-1}}})^{-\alpha^{-1}\beta}]\big)&=(1,\ldots,1,[a_1^{\,\alpha},b_1,b_1a_1^{\, -\alpha^{-1}\beta^2}]),\\
     \psi_1\big([(b^{a^{ 3^{m_1-1}}})^{\beta^{-1}\alpha},b,(b^{a^{2\cdot 3^{m_1-1}}})^{\beta^{-1}\alpha} b^{-1}]\big)&=(1,\ldots,1,[a_1^{\,\alpha},b_1,a_1^{\, \beta^{-1}\alpha^2} b_1^{-1}]).
\end{align*}
Note that $\beta^{-1}\alpha^2-\alpha^{-1}\beta^2\not\equiv 0\pmod 3$. Indeed, if $\beta^{-1}\alpha^2\equiv\alpha^{-1}\beta^2 \pmod 3$, then we obtain $\alpha\equiv \beta \pmod 3$, a contradiction.
Therefore  $$
\langle b_1a_1^{\, -\alpha^{-1}\beta^2}, a_1^{\, \beta^{-1}\alpha^2} b_1^{-1}\rangle=\langle b_1a_1^{\, -\alpha^{-1}\beta^2},a_1^{\, \beta^{-1}\alpha^2-\alpha^{-1}\beta^2}\rangle=G_1,
$$
and it follows that $1\times\cdots\times 1\times \gamma_3(G_1)\le \psi_1\big(\gamma_3(\st_{G}(1))\big)$. Thus, the result follows in this case.

So we suppose that $(p,t)\ne (3,m_1-1)$.
Recall our assumption that $e_k^2-e_{k-i}e_{k+i}\equiv 0 \pmod p$ for all~$k$ such that the terms $e_{k-i},e_k,e_{k+i}$ are defined. Then, writing $\lambda:=e_{2p^t}e_{p^t}^{\,-1} \in(\mathbb{Z}/p^{m_2}\mathbb{Z})^*$, this inductively implies
\begin{equation}\label{eq:lambda}
e_{np^t}\equiv \lambda ^{n-1}e_{p^t} \pmod p
\end{equation}
for $n\in\{1,\ldots,p^{m_1-t}-1\}$. Note that $\lambda\not\equiv 1 \pmod p$ as $\mathbf{e}\notin\mathcal{E}$. 

Let $\mu\in\{1,\ldots,p^{m_2}-1\}$ be such that $\mu e_{p^t}\equiv 1\pmod{p^{m_2}}$. Observe that
\[
\psi_1\big([b,b^{a^{p^t}},b^{a^{p^t}} (b^{a^{-p^t}})^{-\mu e_{p^{m_1}-p^t}} ]\big)=(1,\overset{p^t-1}\ldots,1,[a_1^{\,e_{p^t}},b_1,b_1a_1^{\,-\mu e_{2p^t}e_{p^{m_1}-p^t}}],1,\ldots,1)
\]
and also
\[
\psi_1\big(b^{e_{p^{m_1}-3p^t}}(b^{a^{2p^t}})^{-e_{p^{m_1}-p^t}}\big)=(*,\overset{p^t-1}\ldots,*, a_1^{\,e_{p^t}e_{p^{m_1}-3p^t}\,-\,e_{p^{m_1}-p^t}^{\,2}},*,\ldots,*,1,*,\overset{p^t-1}\ldots,*).
\]
For convenience, we write
$g:=b^{e_{p^{m_1}-3p^t}}(b^{a^{2p^t}})^{-e_{p^{m_1}-p^t}}$. Then, viewing $\lambda^{-1}$ as an integer between 1 and $p^{m_2}-1$, we obtain
\[
\psi_1\big([(b^{a^{-p^t}})^{\lambda^{-1}},b^{a^{p^t}},g]\big)=(1,\overset{p^t-1}\ldots,1,[a_1^{\,e_{p^t}},b_1,a_1^{\,e_{p^t}e_{p^{m_1}-3p^t}\,-\,e_{p^{m_1}-p^t}^{\,2}}],1,\ldots,1).
\]
From Equation~\eqref{eq:lambda}, we have
\begin{align*}
e_{p^t}e_{p^{m_1}-3p^t}\,-\,e_{p^{m_1}-p^t}^{\,2} &\equiv \lambda^{p^{m_1-t}-4}e_{p^t}^{\,2}\,-\,\lambda^{2(p^{m_1-t}-2)} e_{p^t}^{\,2} \pmod p,\\
&\equiv \lambda^{p^{m_1-t}-4}e_{p^t}^{\,2}(1\,-\,\lambda^{p^{m_1-t}}) \quad\pmod p.
\end{align*}
As none of these three factors is congruent to 0 modulo $p$, it follows that
\[
e_{p^t}e_{p^{m_1}-3p^t}\,-\,e_{p^{m_1}-p^t}^{\,2}\not\equiv 0 \pmod p.
\]
Hence 
\[
G_1=\langle a_1^{\,e_{p^t}e_{p^{m_1}-3p^t}\,-\,e_{p^{m_1}-p^t}^{\,2}}, b_1a_1^{\,-\mu e_{2p^t}e_{p^{m_1}-p^t}}\rangle,
\]
and since
\[
G_1'=\langle [a_1^{\,e_{p^t}},b_1]\rangle^{G_1},
\]
we obtain
\[
\gamma_3(G_1)=\big\langle  [a_1^{\,e_{p^t}},b_1,a_1^{\,e_{p^t}e_{p^{m_1}-3p^t}\,-\,e_{p^{m_1}-p^t}^{\,2}}], [a_1^{\,e_{p^t}},b_1,b_1a_1^{\,-\mu e_{2p^t}e_{p^{m_1}-p^t}}]\big\rangle^{G_1}.
\]
Thus we have
\[
1\times\overset{p^t-1}\dots \times 1\times \gamma_3(G_1)\times 1\times\dots \times 1\le \psi_1\big(\gamma_3(\st_G(1))\big).
\]

\underline{Case 2:} Suppose $Y\subset\{p^t,2p^t,\ldots,p^{m_1}-p^t\}$ is a proper subset.  We proceed analogously to the proof of \cite[Thm.~2.9]{DFG}. Let $h$ and $\ell$ be such that $hp^t\notin Y$ but  $\ell p^t, (h-\ell)p^t\in Y$; here $-kp^t$ stands for $p^{m_1}-kp^t$, for a positive integer~$k$. For convenience, we write 
$$
\lambda:=e_{\ell p^t},\quad q:=e_{p^{m_1}-hp^t},\quad z:=e_{p^{m_1}-\ell p^t},\quad  y:=e_{p^{m_1}-(h-\ell)p^t}.
$$
{\color{red} }
Note that $q\equiv 0\pmod p$ and by our hypothesis $y,z\not\equiv 0\pmod p$. Also it is possible that $y=z$.
As $q\equiv 0\pmod p$ and $a_1$ has order~$p^{m_2}$, there is a minimal positive integer $r$ such that $a_1^{\,q^r}=1$.

For an integer $n\ge 0$, set
\begin{align*}
g_{2n}&:=\big[b^{\lambda^{-n}z^n},(b^{a^{\ell p^t}})^{\lambda^{-n}z^n}, (b^{a^{hp^t}})^{q^{2n}y^{-2n-1}}\big]^{a^{-2n\ell p^t}}
\end{align*}
and
\begin{align*}
k_{2n+1}&:=\big[(b^{a^{\ell p^t}})^{\lambda^{-n}z^n},b^{\lambda^{-n-1}z^{n+1}}, (b^{a^{hp^t}})^{q^{2n+1}y^{-2n-2}}\big]^{a^{-3n\ell p^t}}.
\end{align*}
Then
\begin{align*}
    \psi_1(g_{2n})&=(1,\ldots,1,
    [b_1^{\,\lambda^{-n}z^n},a_1^{\,\lambda^{-n} z^{n+1}},a_1^{\,q^{2n+1}y^{-2n-1}}]
,1,\overset{\ell p^t-1}\ldots,1,\\
&\qquad\qquad\qquad\qquad\qquad\qquad\qquad\qquad [a_1^{\, \lambda^{-n+1}z^n},b_1^{\,\lambda^{-n}z^n},a_1^{\,q^{2n}y^{-2n}}],1,\overset{(2n-1)\ell p^t-1}\ldots,1),\\
    \psi_1(k_{2n+1})&=(1,\ldots,1,[a_1^{\,\lambda^{-n}z^{n+1}},b_1^{\,\lambda^{-n-1}z^{n+1}},a_1^{\,q^{2n+2}y^{-2n-2}}],1,\overset{\ell p^t-1}\ldots,1,\\
   &\qquad\qquad\qquad\qquad\qquad\qquad\qquad\qquad 
    [b_1^{\,\lambda^{-n}z^n},a_1^{\, \lambda^{-n}z^{n+1}},a_1^{\,q^{2n+1}y^{-2n-1}}],1,\overset{2n\ell p^t-1}\ldots,1).
\end{align*}
Note that the $\lambda^{-1}$ (respectively $y^{-1}$) in the exponent of $b$ and $b_1$ is viewed as an integer, which upon reduction modulo~$p^{m_2}$ equals the inverse of $\lambda$ (respectively $y$) in $\mathbb{Z}/p^{m_2}\mathbb{Z}$.

Define
\[
g:=\begin{cases}
g_0k_1^{-1}g_2k_3^{-1}\cdots k_{r-1}^{-1} &\text{if $r$ is even},\\
g_0k_1^{-1}g_2k_3^{-1}\cdots g_{r-1} &\text{if $r$ is odd}.
\end{cases}
\]
A direct computation yields
\[
g:=\psi_1^{-1}\big((1,\overset{\ell p^t-1}\ldots,1,[a_1^{\,\lambda},b_1,a_1],1,\ldots,1)\big)\in\gamma_3(G).
\]
Hence, writing $N=\langle [a_1^{\,\lambda},b_1,a_1]\rangle^{G_1}$,  we have by conjugation that
\[
N\times\overset{p^{m_1}}{\ldots}\times N\le\psi_1(\gamma_3(G)).
\]

Finally, we consider
\begin{align*}
    \psi_1\Big(\big[b,b^{a^{\ell p^t}}, b^{a^{\ell p^t}}(b^{a^{-\ell p^t}})^{\lambda^{-1}z}\big]\Big)
    &=(1,\overset{\ell p^t-1}\ldots,1,[a_1^{\,\lambda},b_1,b_1a_1^{\,\lambda^{-1}ze_{2\ell p^t}}],1,\ldots,1
    ).
\end{align*}
Since $G_1=\langle a_1,b_1a_1^{\,\lambda^{-1}ze_{2\ell p^t}}\rangle$, we obtain $\gamma_3(G_1)=\langle [a_1^{\,\lambda},b_1,a_1],[a_1^{\,\lambda},b_1,b_1a_1^{\,\lambda^{-1}ze_{2\ell p^t}}]\rangle^{G_1}$, and the result follows for $n=1$. 

\smallskip

For $n>1$, the result follows in a straightforward manner from the definition of $b_1$, in particular as $\psi_1(b_1)=(a^*,\ldots,a^*,a^{e_{p^t}},a^*,\ldots,a^{e_{p^{m_1}-1}},1,\ldots,1,b_2)$, so the component in position $p^{m_2}-p^t$ is trivial.
\end{proof}

\begin{corollary}\label{cor:periodic}
Let $G$ be a growing GGS-group with defining vector $\mathbf{e}$ satisfying~\eqref{eq:periodicity}.
Then 
for all $n$,
    \[
\psi_1\big(\gamma_3(\st_{G_{n-1}}(1))\big)= \gamma_3(G_n)\times \overset{p^{m_n}}\dots\times \gamma_3(G_n). \]
\end{corollary}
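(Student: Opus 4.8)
The plan is to deduce Corollary~\ref{cor:periodic} from Lemma~\ref{lem:branch_gamma_3} and Theorem~\ref{thm:branching-gamma-3} by verifying that any defining vector~$\mathbf{e}$ satisfying the torsion-type condition~\eqref{eq:periodicity} falls into (at least) one of the cases those results cover. First I would record the easy structural consequences of~\eqref{eq:periodicity}: the case $i=0$ of~\eqref{eq:periodicity} reads $\sum_{k=1}^{p^{m_1}-1} e_k \equiv 0 \pmod p$, and since not all $e_i$ vanish mod~$p$ this forces $\mathbf{e}$ to be non-constant modulo~$p$; in particular $\mathbf{e}\in\mathcal{F}$ and $\mathbf{e}$ is non-constant. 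Next I would observe that~\eqref{eq:periodicity} is symmetric under $i\mapsto p^{m_1}-i$ in the sense that $e_j\equiv 0\pmod p$ forces nothing directly, but the conclusion we actually need — that $i\in Y$ if and only if $p^{m_1}-i\in Y$ — must be handled together with the alternative hypothesis of Lemma~\ref{lem:branch_gamma_3}.

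The core of the argument is a dichotomy. Fix $i\in Y$. \emph{Either} there exists $k$ with $e_k^2 - e_{k-i}e_{k+i}\not\equiv 0\pmod p$ (indices taken where defined), in which case Lemma~\ref{lem:branch_gamma_3} applies directly and we are done; \emph{or} $e_k^2\equiv e_{k-i}e_{k+i}\pmod p$ for all admissible~$k$. In the latter case I would first argue that $Y\subseteq p^t\mathbb{Z}$ automatically (by definition of~$t$), and then show that the recursion $e_k^2\equiv e_{k-i}e_{k+i}$, combined with~\eqref{eq:periodicity}, forces $\mathbf{e}\notin\mathcal{E}$ and forces the symmetry $i\in Y \Leftrightarrow p^{m_1}-i\in Y$, so that Theorem~\ref{thm:branching-gamma-3} applies. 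For the symmetry: if $i\in Y$ but $p^{m_1}-i\notin Y$, then running the multiplicative recursion $e_{np^t}\equiv \lambda^{n-1}e_{p^t}\pmod p$ (as in Equation~\eqref{eq:lambda}, valid precisely because the quadratic relations hold) shows every $e_{np^t}$ is a unit mod~$p$, contradicting $p^{m_1}-i = (p^{m_1-t}-i/p^t)p^t\notin Y$ when $i=\lambda' p^t$; I would need to check the edge indices carefully. For $\mathbf{e}\notin\mathcal{E}$: if $\mathbf{e}\in\mathcal{E}$, all $e_{ip^t}$ are congruent mod~$p$, so $\sum_{k} e_{kp^t}\equiv (p^{m_1-t}-1)e_{p^t}\pmod p$; matching this against the $i=t$ instance of~\eqref{eq:periodicity}, namely $\sum_{k=1}^{p^{m_1-t}-1}e_{kp^t}\equiv 0\pmod{p^{t+1}}$, hence mod~$p$, and using $p\nmid e_{p^t}$, gives $p^{m_1-t}-1\equiv 0\pmod p$, i.e.\ $p\mid p^{m_1-t}-1$, which is impossible since $m_1-t\ge 1$. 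So $\mathbf{e}\notin\mathcal{E}$, and Theorem~\ref{thm:branching-gamma-3} applies.

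Finally, I would assemble these pieces: given~\eqref{eq:periodicity}, fix $i\in Y$; if the quadratic nonvanishing condition of Lemma~\ref{lem:branch_gamma_3} holds for some~$k$, invoke that lemma; otherwise invoke Theorem~\ref{thm:branching-gamma-3} via the symmetry and the fact $\mathbf{e}\in\mathcal{F}\setminus\mathcal{E}$ just established. Either way we get $\psi_1(\gamma_3(\st_{G_{n-1}}(1))) = \gamma_3(G_n)\times\overset{p^{m_n}}\dots\times\gamma_3(G_n)$ for all~$n$. The main obstacle I anticipate is the bookkeeping in the ``otherwise'' branch: carefully propagating the relation $e_k^2\equiv e_{k-i}e_{k+i}$ across all admissible indices to pin down both the $\mathcal{E}$-exclusion and the $Y$-symmetry, since the indices only range over $\{1,\ldots,p^{m_1}-1\}$ and one must be attentive to which products $e_{k-i}e_{k+i}$ are actually defined — exactly the kind of edge-index care taken in Case~1 and Case~2 of the proof of Theorem~\ref{thm:branching-gamma-3}.
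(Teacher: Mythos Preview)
Your overall strategy and your $\mathcal{E}$-exclusion argument match the paper's exactly. But you are missing one branch of the case analysis, and this omission is precisely what creates the gap you yourself flag in the symmetry step.

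The paper also puts Lemma~\ref{lem:G'} into the dichotomy: if some $j\in Y$ has $p^{m_1}-j\notin Y$, then Lemma~\ref{lem:G'} gives $\psi_1(\st_{G_{n-1}}(1)') = G_n'\times\cdots\times G_n'$, and combining this with strong $\sigma$-fractality (Lemma~\ref{lem:fractal}) one gets $\gamma_3(G_n)\times\cdots\times\gamma_3(G_n)\le\psi_1\big([\st_{G_{n-1}}(1)',\st_{G_{n-1}}(1)]\big)=\psi_1\big(\gamma_3(\st_{G_{n-1}}(1))\big)$ directly. In the remaining case the $Y$-symmetry hypothesis of Theorem~\ref{thm:branching-gamma-3} holds \emph{for free}, as it is exactly the negation of Lemma~\ref{lem:G'}'s hypothesis; the only thing left to check is $\mathbf{e}\notin\mathcal{E}$, which you do correctly.

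You instead try to extract the $Y$-symmetry from the quadratic relations, and this argument is flawed as written. The geometric-progression formula $e_{np^t}\equiv\lambda^{n-1}e_{p^t}$ of~\eqref{eq:lambda} is derived (in the proof of Theorem~\ref{thm:branching-gamma-3}) specifically for $i=p^t$ and under the standing assumption that all $e_{kp^t}$ are units; if some intermediate $e_{kp^t}\equiv 0\pmod p$ the chain of relations $e_k^2\equiv e_{k-i}e_{k+i}$ does not propagate a unit, so nothing forces $p^{m_1}-i\in Y$. Moreover, Theorem~\ref{thm:branching-gamma-3} requires the symmetry for \emph{every} element of~$Y$, whereas your argument only addresses the single $i$ you fixed. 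Invoking Lemma~\ref{lem:G'} removes all of this bookkeeping.

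A minor circularity: you assert ``since not all $e_i$ vanish mod~$p$'' in order to deduce $\mathbf{e}\in\mathcal{F}$, but that assertion \emph{is} the statement $\mathbf{e}\in\mathcal{F}$; condition~\eqref{eq:periodicity} alone does not force it. The paper simply treats $\mathbf{e}\in\mathcal{F}$ as an implicit standing assumption (all the lemmas invoked require it).
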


\begin{proof}
This is akin to  \cite[Cor.~2.12]{DFG}. 
Indeed, if any of the conditions in Lemmas~\ref{lem:G'} and \ref{lem:branch_gamma_3} are satisfied, we are done. So we suppose otherwise. Then by Theorem~\ref{thm:branching-gamma-3}, it suffices to show that $\mathbf{e}\notin\mathcal{E}$. Suppose on the contrary that $\mathbf{e}\in\mathcal{E}$. Then $e_{ip^t}\equiv e_{jp^t} \pmod p$ for all $1\le i,j\le p^{m_1-t}-1$. However, since $\mathbf{e}$ satisfies \eqref{eq:periodicity}, it follows from
\[
\sum_{k=1}^{p^{m_1-t}-1} e_{kp^t} \equiv (p^{m_1-t}-1)e_{p^t}\equiv 0 \pmod p
\]
that $e_{p^t}\equiv 0 \pmod p$, which contradicts the definition of $Y$ and $t$. Hence the result.
\end{proof}

\begin{theorem}\label{lem:branch-gamma-3}
Let $G$ be a growing GGS-group such that for all $n$,
\[
\psi_1(\gamma_3(G_{n-1}))\ge \gamma_3(G_n)\times \overset{p^{m_n}}\dots\times \gamma_3(G_n).
\]
Then for all $n$,
\[
\psi_n(\rist_G(n))\ge\langle b_n^{\,p^{m_{n+1}}}\rangle^{G_n} \gamma_3(G_n)\times \overset{p^{m_1+\cdots+m_n}}\dots \times \langle b_n^{\,p^{m_{n+1}}}\rangle^{G_n} \gamma_3(G_n).
\]
In particular, the group~$G$ is branch. 

Further if
$
\psi_1(G_{n-1}')\ge G_n'\times \overset{p^{m_n}}\dots\times G_n'$ for all  $n$,
then 
\[
\psi_n(\rist_G(n))\ge\langle b_n^{\,p^{m_{n+1}}}\rangle G_n'\times \overset{p^{m_1+\cdots+m_n}}\dots \times \langle b_n^{\,p^{m_{n+1}}}\rangle G_n'
\]
for all $n$.
\end{theorem}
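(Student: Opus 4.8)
The plan is to run the argument of the first assertion with $\gamma_3(G_n)$ replaced throughout by $G_n'$, invoking the stronger hypothesis $\psi_1(G_{n-1}')\ge G_n'\times\overset{p^{m_n}}\dots\times G_n'$ in place of the weaker one. The first step is the usual reduction to a pointwise statement about rigid \emph{vertex} stabilisers: since $\rist_G(n)=\prod_{|u|=n}\rist_G(u)$ is an internal direct product and, for $g\in\rist_G(u)$ with $|u|=n$, the tuple $\psi_n(g)$ is trivial in every coordinate except the one indexed by~$u$, where it equals $\varphi_u(g)$, one has
\[
\psi_n(\rist_G(n))=\prod\nolimits_{|u|=n}\varphi_u(\rist_G(u))
\]
as a Cartesian product of subgroups living on pairwise disjoint coordinates. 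Hence it suffices to show $\varphi_u(\rist_G(u))\supseteq\langle b_n^{\,p^{m_{n+1}}}\rangle G_n'$ for every vertex $u$, where $n=|u|$. Since $\langle b_n^{\,p^{m_{n+1}}}\rangle G_n'$ is a subgroup of $G_n$ (as $G_n'\trianglelefteq G_n$), this in turn splits into: (a) $\varphi_u(\rist_G(u))\supseteq G_n'$, and (b) $\varphi_u(\rist_G(u))$ contains an element congruent to $b_n^{\,p^{m_{n+1}}}$ modulo~$G_n'$; granting both, $\varphi_u(\rist_G(u))$ contains $\langle b_n^{\,p^{m_{n+1}}}\rangle G_n'$.

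For (a) I would induct on $|u|$. When $|u|=1$, the hypothesis with $n=1$ gives $\psi_1(G')\supseteq G_1'\times\overset{p^{m_1}}\dots\times G_1'$, so each $g\in G_1'$ has a preimage $h\in G'$ with $\psi_1(h)=(g,1,\dots,1)$; as $G'\le\st_G(1)$ by Lemma~\ref{lem:first-over-derived}, such an $h$ fixes every vertex outside the first first-level subtree, so $h\in\rist_G(x_1)$ with $\varphi_{x_1}(h)=g$, and conjugating by powers of the rooted automorphism~$a$ (whose sections are trivial) spreads this to $\varphi_x(\rist_G(x))\supseteq G_1'$ for every $x\in X_1$. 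For the inductive step, assume $\varphi_v(\rist_G(v))\supseteq G_k'$ for all $|v|=k$, and fix such a $v$ together with a child~$vy$. The restriction $\varphi_v|_{\rist_G(v)}$ is an isomorphism onto $\varphi_v(\rist_G(v))$ compatible with the tree below~$v$, so $\varphi_v(\rist_G(vy))=\rist_{\varphi_v(\rist_G(v))}(y)\supseteq\rist_{G_k'}(y)$ by monotonicity of rigid vertex stabilisers. Repeating the preimage argument one level down inside~$G_k$ — the hypothesis with $n=k+1$ gives $\psi_1(G_k')\supseteq G_{k+1}'\times\overset{p^{m_{k+1}}}\dots\times G_{k+1}'$, and $G_k'\le\st_{G_k}(1)$ with $G_k'\trianglelefteq G_k$, so conjugation by~$a_k$ moves one first-level vertex of $T_k$ to any other — yields $\varphi_y(\rist_{G_k'}(y))\supseteq G_{k+1}'$. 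Since $\varphi_{vy}=\varphi_y\circ\varphi_v$ on $\st_G(vy)$, this gives $\varphi_{vy}(\rist_G(vy))\supseteq\varphi_y(\rist_{G_k'}(y))\supseteq G_{k+1}'$, closing the induction.

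For (b), one checks directly — exactly as in the first assertion, nothing here touching $\gamma_3$ — that $b_0^{\,p^{m_{n+1}}}$ is rigidly supported below the level-$n$ vertex $(p^{m_1},\dots,p^{m_n})$ with section $b_n^{\,p^{m_{n+1}}}$ there, since $a_j$ has order $p^{m_{j+1}}$ dividing $p^{m_{n+1}}$ for $1\le j\le n$; conjugating this element around the $n$-th layer by lifts of rooted automorphisms — these exist because $G$ is $\sigma$-fractal (the hypothesis forces $\mathbf{e}\in\mathcal{F}$, so Lemma~\ref{lem:fractal} applies), and their sections vanish so the section at the moved vertex is unchanged — places a copy of $b_n^{\,p^{m_{n+1}}}$ inside $\varphi_u(\rist_G(u))$ for every $u$ of level~$n$. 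Combining (a) and (b) and substituting into the product formula of the first paragraph then gives the asserted inclusion. The step I expect to be the crux is the inductive step of (a): one must be sure that the derived subgroup genuinely passes through the restriction map $\varphi_v$, even though $\varphi_v(\rist_G(v))$ is in general a proper, and possibly much smaller, subgroup of $G_{|v|}$. What makes it work is precisely that the hypothesis is formulated for the derived subgroup and that $G_k'$ already lies inside the first-level stabiliser $\st_{G_k}(1)$, so that the preimages handed over by the hypothesis automatically land in a rigid vertex stabiliser \emph{of $G_k'$ itself}; a secondary subtlety is the tree-compatibility of $\varphi_v|_{\rist_G(v)}$, which is what allows the identification of $\varphi_v(\rist_G(vy))$ with $\rist_{\varphi_v(\rist_G(v))}(y)$ exactly, and not merely with a subgroup of it.
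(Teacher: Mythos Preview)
Your overall strategy is correct and matches what the paper does (the paper's own argument is the single sentence ``proved similarly''): reduce to the vertexwise inclusion $\varphi_u(\rist_G(u))\supseteq\langle b_n^{\,p^{m_{n+1}}}\rangle G_n'$, get $G_n'$ by induction via the branching hypothesis, and get the $b_n^{\,p^{m_{n+1}}}$-part from the explicit element $b_0^{\,p^{m_{n+1}}}$. Your inductive step~(a) is carefully argued and the identification $\varphi_v(\rist_G(vy))=\rist_{\varphi_v(\rist_G(v))}(y)$ is exactly right.

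There is one genuine inaccuracy in your justification of~(b). You assert that the conjugating elements, obtained as ``lifts of rooted automorphisms'' via $\sigma$-fractality, have vanishing sections, so that the section of the moved element at the target vertex is literally $b_n^{\,p^{m_{n+1}}}$. That is not what $\sigma$-fractality gives: a preimage $h\in\st_G(v)$ of $a_k$ under $\varphi_v$ will typically have non-trivial sections at siblings of~$v$. What you actually obtain, for any $g\in G$ with $u_0^{\,g}=u$, is $(b_0^{\,p^{m_{n+1}}})^g\in\rist_G(u)$ with $\varphi_u\big((b_0^{\,p^{m_{n+1}}})^g\big)=(b_n^{\,p^{m_{n+1}}})^{g_{u_0}}$ for the section $g_{u_0}\in G_n$. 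This is a $G_n$-conjugate, not $b_n^{\,p^{m_{n+1}}}$ itself. Fortunately this is precisely enough for your stated~(b): a conjugate differs from $b_n^{\,p^{m_{n+1}}}$ by an element of $G_n'$, and since you already have $G_n'\subseteq\varphi_u(\rist_G(u))$ from~(a), the subgroup contains $b_n^{\,p^{m_{n+1}}}$ as well. Alternatively, you can bypass this by observing that the $\gamma_3$-branching hypothesis (still in force for the ``Further'' clause) makes $G$ super strongly $\sigma$-fractal by Lemma~\ref{lem:super-strongly-fractal}, whence $\varphi_u(\rist_G(u))\trianglelefteq\varphi_u(\st_G(n))=G_n$; normality then upgrades ``contains a conjugate'' to ``contains the element''. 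Either patch closes the gap; the rest of your argument stands.
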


\begin{proof}
For $n=1$, this follows from the definition of~$b_1$, in particular using the fact that the order of~$a_1$ is~$p^{m_2}$. 
For $n\ge 2$, as $\varphi_{u}(\rist_G(n))\le \rist_{G_{n-1}}(1)$ for any $(n-1)$st-level vertex~$u$, the result follows from the $n=1$ case, since $\overline{m}$ is arbitrary.
The fact that $\rist_G(n)$ has finite index in~$G$ follows from  Lemma~\ref{lem:first-over-derived}. Indeed, we have that $|G:\langle b^{p^{m_{n+1}}}\rangle G'|<\infty$ and $|\langle b^{p^{m_{n+1}}}\rangle G':\langle b^{p^{m_{n+1}}}\rangle^G \gamma_3(G)|<\infty$, and hence $|G:\langle b^{p^{m_{n+1}}}\rangle^G \gamma_3(G)|<\infty$. The result then follows from the inequality 
$|\psi_n(\st_G(n)):\psi_n(\rist_G(n))|\le |G_n:\langle b_n^{\,p^{m_{n+1}}}\rangle^{G_n} \gamma_3(G_n)|^{p^{m_1+\cdots+m_n}}$.

The final statement  is proved similarly.
\end{proof}

\begin{corollary}
\label{cor:saturated}
Let $G$ be a growing GGS-group, defined by $\mathbf{e}\in\mathcal{F}$ which is non-constant modulo~$p$, and such that for all $n$,
\[
\psi_1(G_{n-1}')\ge G_n'\times \overset{p^{m_n}}\dots\times G_n'.
\]
Then $\textup{Aut }G=N_{\textup{Aut }T}(G)$.
\end{corollary}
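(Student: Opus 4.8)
The plan is to show $\Aut G = N_{\Aut T}(G)$ by proving that every abstract automorphism of $G$ is induced by conjugation inside $\Aut T$. The standard route, going back to the rigidity theorems for branch groups (see \cite{BarthGrigSunik, NewHorizons}), is to establish that $G$ is a \emph{saturated} subgroup of $\Aut T$ in the sense that $G$ has a characteristic chain of finite-index subgroups whose intersection is trivial and which is cofinal with the level stabilisers, and then to invoke the general principle that for such branch groups $\Aut G$ embeds into $\Aut T$ via its action on the boundary. Concretely, I would first record that $G$ is branch (this is Theorem~\ref{lem:branch-gamma-3}, whose hypothesis is exactly our assumption $\psi_1(G_{n-1}')\ge G_n'\times\cdots\times G_n'$, which forces $\psi_1(\gamma_3(G_{n-1}))\ge\gamma_3(G_n)\times\cdots\times\gamma_3(G_n)$ by Lemma~\ref{lem:derived-stab-in-gamma-3}), and that $G$ is $\sigma$-fractal and acts spherically transitively (Lemma~\ref{lem:fractal}, since $\mathbf{e}\in\mathcal{F}$).

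The key steps, in order, are as follows. First I would show the level stabilisers $\st_G(n)$ form a characteristic series: since $G$ is branch and $\mathbf{e}$ is non-constant modulo $p$, the group $G$ is \emph{not} regular branch over a congruence subgroup in the naive sense, but one can still argue that $\st_G(n)$ is characteristic. The cleanest way is to identify $\st_G(n)$ intrinsically; for instance $\st_G(1)$ should be recoverable from the group structure because $G/\st_G(1)\cong C_{p^{m_1}}$ is the \emph{unique} quotient of that isomorphism type sitting appropriately inside $G/G'\cong C_{p^{m_1}}\times\mathbb{Z}$ (Proposition~\ref{abelianization}), together with a minimality property of $\st_G(1)$ among subgroups of that index coming from the branch structure; then $\st_G(n)$ is built from $\st_{G_{n-1}}(1)$ via $\sigma$-fractality, so characteristicity propagates. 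Second, given $\alpha\in\Aut G$, since $\alpha$ permutes the (characteristic) level stabilisers trivially, $\alpha$ induces a compatible family of automorphisms of the finite quotients $G/\st_G(n)$ and hence acts on the tree $T$, i.e.\ $\alpha$ extends to some $\tilde\alpha\in\Aut T$; the non-constancy of $\mathbf e$ modulo $p$ is used to rule out "diagonal" ambiguities in reconstructing the action on each level (this is the role of the hypothesis in \cite{DFG}-type arguments and is why constant vectors are excluded). Third, $\tilde\alpha$ normalises $G$ because it induces $\alpha$ on $G$, so $\alpha\in N_{\Aut T}(G)$ modulo the centraliser $C_{\Aut T}(G)$; but $G$ acts spherically transitively and is weakly branch, so $C_{\Aut T}(G)=1$, giving $\Aut G\hookrightarrow N_{\Aut T}(G)$. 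The reverse inclusion $N_{\Aut T}(G)\le\Aut G$ is immediate since conjugation by a normalising element is an automorphism of $G$.

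The main obstacle I anticipate is the first step: proving that $\st_G(1)$ (equivalently the whole level-stabiliser chain) is characteristic in $G$. For the classical $p$-regular GGS-groups one uses that $\st_G(1)=\langle b\rangle^G$ together with torsion/congruence arguments, but here $b$ has \emph{infinite} order and $G$ does not have the congruence subgroup property, so one cannot simply say "finite-index subgroups contain a level stabiliser." Instead I expect one must exploit the branch structure directly: an automorphism of a branch group permutes the rigid vertex stabilisers, hence (after controlling the spherically-transitive action on levels) stabilises each $\rist_G(n)$ up to the tree action, and one then pins down $\st_G(n)$ as $\bigcap_{|v|=n}N_G(\rist_G(v))$ or a similar canonical recipe. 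Carefully checking that the defining condition $\mathbf e\in\mathcal{F}$ non-constant modulo $p$ suffices to make this recipe unambiguous — i.e.\ that there is no exceptional automorphism swapping subtrees in a way not realised in $\Aut T$ — is where the real work lies, and this is exactly the point at which one needs $\psi_1(G_{n-1}')\ge G_n'\times\cdots\times G_n'$ (to guarantee $\rist_G(n)$ is large enough) and non-constancy of $\mathbf e$ (to guarantee the first-level labelling is reconstructible). Once characteristicity of the level stabilisers is in hand, the remaining steps are routine and follow the template in \cite{BarthGrigSunik}.
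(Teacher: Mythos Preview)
Your overall framework—rigidity of branch groups via a characteristic chain cofinal with the level stabilisers—is the right one, but you make the problem harder than necessary and leave the central step unfilled. You aim to show that each $\St_G(n)$ is itself characteristic, you correctly flag this as the main obstacle, and your sketch for it (recovering $\St_G(n)$ as an intersection of normalisers of rigid stabilisers, or from the shape of $G/G'$) is neither carried out nor obviously workable here without essentially reproving the rigidity theorem you are trying to deduce.

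The paper sidesteps this entirely by choosing a different characteristic chain. It sets $L_0=G$ and $L_{n+1}=L_n'$, i.e.\ the derived series $L_n=G^{(n)}$, which is characteristic \emph{for free}. One then checks, by a one-line induction, that $L_n\le\St_G(n)$ and that $\varphi_v(L_n)=G_n$ for every level-$n$ vertex~$v$: since $L_{n-1}\le\St_G(n-1)$, the map $\varphi_v$ is a homomorphism on $L_{n-1}$, so $\varphi_v(L_n)=\varphi_v(L_{n-1})'=G_{n-1}'$ for $v$ at level $n-1$, and then the projection of $G_{n-1}'$ to each first-level coordinate is all of~$G_n$. This last subdirectness is precisely where the non-constancy of $\mathbf{e}$ modulo~$p$ is used—it guarantees that some consecutive difference $e_{i+1}-e_i$ is a unit mod~$p$, so the components of $[a,b]$ and its $a$-conjugates generate~$G_1$—and not in ``ruling out diagonal ambiguities'' as you suggest. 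With $L_n$ in hand the paper simply invokes \cite[Thm.~7.5]{LN}; the hypothesis $\psi_1(G_{n-1}')\ge G_n'\times\cdots\times G_n'$ enters only to put $G$ in the branch setting (via Lemma~\ref{lem:derived-stab-in-gamma-3} and Theorem~\ref{lem:branch-gamma-3}) required for that citation.

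In short: you are effectively trying to reprove the Lavreniuk--Nekrashevych theorem while simultaneously choosing the hardest possible characteristic series. The paper cites the theorem and picks the easiest one.
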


\begin{proof}
By~\cite[Thm.~7.5]{LN}, it suffices to show that, for any $n$ there exists a subgroup $L_n\le \text{St}_G(n)$ that is characteristic in $G$ and $\varphi_v(L_n)$ acts spherically transitively on $T_v$ for all  $n$th level vertices $v$. Set $L_0=G$ and $L_{n+1}=L_n'$. Note that the restriction of $G'$ on the first-level vertices of the tree is the whole group $G_1$. Hence it follows by induction that each restriction $\varphi_v(L_n)$  is the whole group $G_n$ and thus acts spherically transitively on $T_v$.
\end{proof}

\subsection{Congruence subgroup properties}\label{sec:CSP}

Here we investigate various congruence subgroup properties for growing GGS-groups.

\begin{proposition}\label{pro:no-CSP}
Let $G$ be a growing GGS-group.
Then~$G$ does not have the congruence subgroup property.
\end{proposition}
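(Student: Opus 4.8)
The plan is to produce one finite-index subgroup of $G$ that contains no principal congruence subgroup $\st_G(n)$. The obstruction comes from the $\mathbb{Z}$-direction of the abelianisation: by Proposition~\ref{abelianization} the homomorphism $\epsilon_b\colon G\to\mathbb{Z}$ (the composition $G\twoheadrightarrow\nicefrac{G}{G'}\cong\nicefrac{\mathbb{Z}}{p^{m_1}\mathbb{Z}}\times\mathbb{Z}$ followed by the projection onto the second factor, so that $\epsilon_b(a)=0$ and $\epsilon_b(b)=1$) is surjective. Fix a prime $q\ne p$; since $p$ is odd we may take $q=2$. Set $H=\ker\!\big(G\xrightarrow{\epsilon_b}\mathbb{Z}\to\nicefrac{\mathbb{Z}}{q\mathbb{Z}}\big)$, a normal subgroup of index~$q$ in~$G$.

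The heart of the argument is the claim that $b^{\,p^{m_n}}\in\st_G(n)$ for every $n\ge 1$, where $b=b_0$. I would prove this by induction on~$n$; the case $n=1$ is trivial since $b\in\st_G(1)$. For $n\ge 2$ we use that $a_1$ has order $p^{m_2}$ and that $m_2\le m_n$, so that $p^{m_2}\mid p^{m_n}$; applying $\psi_1$ to the relation $\psi_1(b_0)=(a_1^{\,e_1},\ldots,a_1^{\,e_{p^{m_1}-1}},b_1)$ gives $\psi_1(b_0^{\,p^{m_n}})=(1,\ldots,1,b_1^{\,p^{m_n}})$. Now $G_1$ is again a growing GGS-group, acting on the tree with branching exponent sequence $(m_2,m_3,\ldots)$, whose $(n-1)$st term is $m_n$; so the inductive hypothesis applied to $G_1$ yields $b_1^{\,p^{m_n}}\in\st_{G_1}(n-1)$, whence $b_0^{\,p^{m_n}}\in\st_G(n)$.

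Granting the claim, the proof finishes at once: $\epsilon_b(b^{\,p^{m_n}})=p^{m_n}$ is odd because $p$ is odd, hence nonzero modulo $q=2$, so $b^{\,p^{m_n}}\in\st_G(n)\setminus H$. Therefore no level stabiliser $\st_G(n)$ is contained in the finite-index subgroup~$H$, which means the congruence topology on~$G$ is strictly coarser than the full profinite topology; that is, $G$ does not have the congruence subgroup property. In fact the same computation shows $\st_G(n)\not\le\ker(G\to\nicefrac{\mathbb{Z}}{q\mathbb{Z}})$ for every prime $q\ne p$ and every~$n$.

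The only step that needs genuine care is the inductive claim $b^{\,p^{m_n}}\in\st_G(n)$: one must take exactly the power $p^{m_n}$ — not a smaller one — to annihilate all the $a_j$-labelled sections down to level~$n$, since the orders $p^{m_{j+1}}$ of the $a_j$ increase, and one must line up the shifted group $G_1$ (together with its shifted branching sequence) correctly with the induction. Everything else is immediate from the structure of the abelianisation recorded in Proposition~\ref{abelianization}.
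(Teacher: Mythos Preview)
Your proof is correct and rests on the same underlying idea as the paper's: the $\mathbb{Z}$-factor in the abelianisation (Proposition~\ref{abelianization}) yields a finite quotient of order coprime to~$p$, and no level stabiliser can map onto such a quotient. The paper dispatches the second step in one line by observing that every congruence quotient $G/\st_G(n)$ is a $p$-group (the tree has $p$-power branching, so $\Aut T_{[n]}$ is a finite $p$-group), whereas you instead explicitly produce a witness $b^{\,p^{m_n}}\in\st_G(n)\setminus H$ via an induction on~$n$; your route is more hands-on but proves slightly more (an explicit element), while the paper's is shorter and uses only the ambient pro-$p$ structure.
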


\begin{proof}
It follows from Proposition~\ref{abelianization} that we have quotients of non-$p$-power order. Hence the result follows.
\end{proof}

The following lemma highlights another difference compared to the GGS-groups acting on the $p$-regular rooted tree.
\begin{lemma}
\label{lem:second-stab}
Let~$G$ be a growing GGS-group. Then $\st_G(n)\not\le G'$ for all $n$.
\end{lemma}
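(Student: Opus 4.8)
The plan is to exhibit, for each $n$, an explicit element of $\st_G(n)$ lying outside $G'$. Recall from Proposition~\ref{abelianization} and the discussion preceding Lemma~\ref{shortening} that the exponent map $\epsilon_b \colon G \to \mathbb{Z}$ is a well-defined homomorphism which factors through $\nicefrac{G}{G'}$, and hence vanishes on $G'$, and which satisfies $\epsilon_b(b) = 1$. Therefore any power of $b$ with nonzero exponent automatically fails to lie in $G'$, and it suffices to locate such a power that also lies in $\st_G(n)$.

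The key claim is that $b^{\,p^{m_n}} \in \st_G(n)$ for every $n$, which I would prove by induction on~$n$. For $n = 1$ this is immediate since $b \in \st_G(1)$. For the inductive step, recall
\[
\psi_1(b^{N}) = (a_1^{\,N e_1}, \ldots, a_1^{\,N e_{p^{m_1}-1}}, b_1^{\,N}).
\]
When $N = p^{m_n}$ with $n \ge 2$ we have $p^{m_2} \mid N$, and since $a_1$ has order $p^{m_2}$ the first $p^{m_1}-1$ sections vanish, so $\psi_1(b^{\,p^{m_n}}) = (1, \overset{p^{m_1}-1}{\ldots}, 1, b_1^{\,p^{m_n}})$. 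Now $a_1$ and $b_1$ generate the shifted growing GGS-group $G_1$, whose branching sequence is $(m_2, m_3, \ldots)$, so that its $(n-1)$st term equals $m_n$; by the inductive hypothesis applied to $G_1$ we obtain $b_1^{\,p^{m_n}} \in \st_{G_1}(n-1)$. As $b^{\,p^{m_n}}$ fixes the first level and every one of its first-level sections fixes $n-1$ further levels, it follows that $b^{\,p^{m_n}} \in \st_G(n)$.

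Putting these together, $b^{\,p^{m_n}} \in \st_G(n)$ while $\epsilon_b(b^{\,p^{m_n}}) = p^{m_n} \ne 0$, so $b^{\,p^{m_n}} \notin G'$; hence $\st_G(n) \not\le G'$. The only delicate point is the bookkeeping in the inductive step, namely matching the index shift between $G$ and $G_1$ and verifying that the trivial sections pose no obstruction to membership in the level stabiliser; but no real difficulty arises, since the fact that $a_1$ has order exactly $p^{m_2}$ does all the work. (Alternatively, one could phrase the conclusion via Lemma~\ref{lem:first-over-derived}, noting that the image of $b^{\,p^{m_n}}$ generates a nontrivial subgroup of $\st_G(1)/G' \cong \mathbb{Z}$.)
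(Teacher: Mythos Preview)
Your argument is correct, but it takes a different and considerably longer route than the paper. The paper's proof is a one-line index comparison: by Proposition~\ref{abelianization} the abelianisation $G/G'$ is isomorphic to $\mathbb{Z}/p^{m_1}\mathbb{Z}\times\mathbb{Z}$, so $G'$ has infinite index in~$G$, whereas each $\st_G(n)$ has finite index; hence $\st_G(n)\le G'$ is impossible. Your approach instead produces an explicit witness $b^{\,p^{m_n}}\in\st_G(n)\setminus G'$ via an induction on~$n$, using the section decomposition of~$b$ and the order of~$a_1$. The index argument is cleaner and requires no computation beyond the already-established abelianisation, while your approach has the minor advantage of exhibiting a concrete element (and indeed the element $b^{\,p^{m_n}}$ and the fact that its sections collapse as you describe reappear later in the paper, for instance in Lemma~\ref{b-order}).
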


\begin{proof}
From Proposition~\ref{abelianization}, we know that $G'$ has infinite index in~$G$. As $\st_G(n)$ has finite index in~$G$ for all $n$, the result follows.
\end{proof}

Next, among the growing GGS-groups, we find  the first examples of branch groups satisfying the $p$-congruence subgroup property but not the congruence subgroup property. See \cite{pcongruence}, \cite{PR} and~\cite{DNT} for weakly branch, but not branch, examples.

\begin{lemma}\label{lem:stabdescription}
Let $G$ be a growing GGS-group with defining vector~$\mathbf{e}$ such that
\[
\psi_1(G_{n-1}')\ge G_n'\times \overset{p^{m_n}}\dots\times G_n'
\]
and $\st_{G_{n-1}}(2)\le \langle b_{n-1}^{\,p^{m_n}}\rangle G_{n-1}'$ for all $n$.
Then for all $n\ge 2$,
\[
\st_G(n)\le\big\langle \psi_{n-2}^{\,-1}(G_{n-2}'\times \overset{p^{m_1+\cdots+m_{n-2}}}\ldots\times G_{n-2}'), b^{\,p^{m_{n-1}}}  \big\rangle^G.
\]
\end{lemma}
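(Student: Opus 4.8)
The plan is to prove the statement by induction on $n\ge 2$, reducing each case to the branching hypothesis $\psi_1(G_{n-1}')\ge G_n'\times\cdots\times G_n'$ together with the local condition $\st_{G_{n-1}}(2)\le\langle b_{n-1}^{\,p^{m_n}}\rangle G_{n-1}'$.

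\smallskip
\noindent\textbf{Base case $n=2$.} First I would take $g\in\st_G(2)$ and write $\psi_1(g)=(g_1,\ldots,g_{p^{m_1}})$ with each $g_j\in\st_{G_1}(1)$. Since $G$ acts spherically transitively, $G/\st_G(1)\cong C_{p^{m_1}}$ is generated by the image of $a$, so modulo $\st_G(1)$ every element of $G$ is a power of $a$; I would like to first multiply $g$ by a suitable power of $b$ to arrange control on the sections. More precisely, applying the condition $\st_{G}(2)\le\langle b^{\,p^{m_1}}\rangle G'$ (the $n=1$ shifted instance with $\overline m$ arbitrary) shows $g\in\langle b^{\,p^{m_1}}\rangle G'$, hence $\psi_1(g)\in\psi_1(\langle b^{\,p^{m_1}}\rangle)\cdot\psi_1(G'\cap\st_G(1))$. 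The key computation is that $\psi_1(b^{\,p^{m_1}})$ has every coordinate equal to $a_1^{\,*}$ except the last, which is $b_1^{\,p^{m_1}}$; combined with the branching hypothesis $\psi_1(G')\ge\psi_1(\st_G(1)')=G_1'\times\cdots\times G_1'$ (using Lemma~\ref{lem:derived-stab-in-gamma-3}-type reasoning, or directly the hypothesis), one deduces that $\psi_1(\st_G(2))$ is contained in the group generated by $G_0'\times\cdots\times G_0'$ — wait, I must be careful: at level $1$ the relevant derived subgroup is $G_1'$, but the statement for $n=2$ asks for $\psi_0=\mathrm{id}$ and $G_0=G$, so in fact the claim for $n=2$ reads $\st_G(2)\le\langle G_0'\times\cdots\times G_0', b^{\,p^{m_1}}\rangle^G = \langle G', b^{\,p^{m_1}}\rangle^G$, which is exactly the hypothesis $\st_G(2)\le\langle b^{\,p^{m_1}}\rangle G'$ applied with the shift. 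So the base case is essentially immediate from the hypothesis.

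\smallskip
\noindent\textbf{Inductive step.} Assume the statement holds for $n-1$ (over every shift), and let $g\in\st_G(n)$. Then $g\in\st_G(1)$ and $\psi_1(g)=(g_1,\ldots,g_{p^{m_1}})$ with each $g_j\in\st_{G_1}(n-1)$. By the induction hypothesis applied to $G_1$,
\[
g_j\in\big\langle \psi_{n-3}^{\,-1}(G_{n-3}^{(1)\prime}\times\cdots\times G_{n-3}^{(1)\prime}),\, b_1^{\,p^{m_{n-1}}}\big\rangle^{G_1}
\]
where $G_k^{(1)}$ denotes the $k$-th shift of $G_1$, i.e. $G_{k+1}$. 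Thus each $g_j$ lies in $\langle \psi_{n-2}^{\,-1}(\text{products of }G_{n-2}')|_{T_1},\, b_1^{\,p^{m_{n-1}}}\rangle$ as a subgroup of $G_1$. The remaining task is to lift this coordinatewise information back through $\psi_1$: I would use the branching hypothesis $\psi_1(G')\ge G_1'\times\cdots\times G_1'$ to realise arbitrary tuples with entries in $G_1'$ as sections of elements of $G'$, and I would handle the $b_1^{\,p^{m_{n-1}}}$-part by noting that $b_1^{\,p^{m_{n-1}}}=\varphi_{p^{m_1}}(b^{\,p^{m_{n-1}}}\cdot(\text{corrections in }\st_G(1)'))$ — more precisely, $\psi_1(b^{\,p^{m_{n-1}}})$ has last coordinate $b_1^{\,p^{m_{n-1}}}$ and the other coordinates are powers of $a_1$, which can be absorbed using spherical transitivity and the conjugation action of $a$ that cyclically permutes coordinates. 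Assembling these, $g$ is expressed as a product of an element of $\psi_1^{-1}(G_1'\times\cdots\times G_1')\le\psi_1^{-1}\big(\text{the }(n-2)\text{-fold product of }G_{n-2}'\text{ refined one more level}\big)$, conjugates of $b^{\,p^{m_{n-1}}}$, and conjugates of $b^{\,p^{m_{n-1}}}$-type terms coming from deeper levels — and one checks these all lie in $\langle\psi_{n-2}^{\,-1}(G_{n-2}'\times\cdots\times G_{n-2}'), b^{\,p^{m_{n-1}}}\rangle^G$ because $\psi_{n-2}^{-1}$ of the full product of $G_{n-2}'$ contains, after one more application of $\psi_1$, the full product of $G_{n-1}'$, which in turn contains $\psi_{n-3}^{-1}(\cdots)$ restricted appropriately.

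\smallskip
\noindent\textbf{Main obstacle.} The hardest part will be bookkeeping the two different "depths" of normal generators: the $\psi_{n-2}^{-1}(G_{n-2}'\times\cdots)$ term sits one level shallower than where the induction naturally produces $\psi_{n-3}^{-1}(G_{n-3}^{(1)\prime}\times\cdots)$, and I must verify that passing from the level-$1$ decomposition up through $\psi_1$ exactly bridges this gap — i.e. that $\psi_1^{-1}\big(\psi_{n-3}^{-1}(\text{prod }G_{n-1}')\times\cdots\times\psi_{n-3}^{-1}(\text{prod }G_{n-1}')\big)$ equals $\psi_{n-2}^{-1}(\text{prod }G_{n-2}')$ up to the $b^{\,p^{m_{n-1}}}$ and $G'$-corrections allowed by the branching hypothesis. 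This is where the hypothesis $\psi_1(G_{n-1}')\ge G_n'\times\cdots\times G_n'$ is used essentially, and also where one must be careful that $b^{\,p^{m_{n-1}}}$ (not $b^{\,p^{m_n}}$) is the right power: the exponent $p^{m_{n-1}}$ is chosen so that $b^{\,p^{m_{n-1}}}\in\st_G(n-1)$ but the argument only needs it to control the single "spine" coordinate at each level while the branching hypothesis handles all the others. Once this alignment is pinned down, the rest is routine manipulation with $\psi_1$ and spherical transitivity.
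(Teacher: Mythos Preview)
Your inductive approach via $\psi_1$ can be made to work, but it is considerably more elaborate than what the paper does, and the ``main obstacle'' you identify is an artefact of the induction rather than a genuine difficulty of the lemma.

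The paper's proof is a single line: since $\psi_{n-2}(\st_G(n))\le \st_{G_{n-2}}(2)\times\cdots\times \st_{G_{n-2}}(2)$, the result is immediate from the hypotheses. The point is that one goes \emph{directly} to level $n-2$, rather than peeling off one level at a time. Each coordinate of $\psi_{n-2}(g)$ lies in $\st_{G_{n-2}}(2)\le \langle b_{n-2}^{\,p^{m_{n-1}}}\rangle G_{n-2}'$ by the second hypothesis (with the index shifted). One then kills the $b_{n-2}^{\,p^{m_{n-1}}}$-part in each coordinate by conjugates of $b^{\,p^{m_{n-1}}}$, using that $\psi_{n-2}(b^{\,p^{m_{n-1}}})=(1,\ldots,1,b_{n-2}^{\,p^{m_{n-1}}})$ for $n\ge 3$ (the $a_k$-coordinates vanish because $m_{n-1}\ge m_{k+1}$) and spherical transitivity; the remaining element then lies in $\psi_{n-2}^{-1}(G_{n-2}'\times\cdots\times G_{n-2}')$. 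No induction, no bookkeeping between levels $n-2$ and $n-3$.

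By contrast, your induction forces you to reconcile the target $\psi_{n-2}^{-1}(G_{n-2}'\times\cdots)$ with what the hypothesis for $G_1$ produces, namely $\psi_{n-3}^{-1}(G_{n-2}'\times\cdots)$ inside $G_1$ (not $G_{n-1}'$, as you wrote in the obstacle paragraph --- that is a slip). This reconciliation is exactly the identity $\psi_{n-2}^{-1}(\prod G_{n-2}')=\psi_1^{-1}\bigl((\psi_{n-3}^{-1}(\prod G_{n-2}'))^{p^{m_1}}\bigr)$, which is true but unnecessary once you see the direct route. Your observation that the base case $n=2$ is literally the hypothesis is correct and is all one needs there.
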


\begin{proof}
Since $\psi_{n-2}(\st_G(n))\le \st_{G_{n-2}}(2)\times\overset{p^{m_1+\cdots+m_{n-2}}}\ldots\times \st_{G_{n-2}}(2)$, the result is immediate from our hypotheses. 
\end{proof}

\begin{example}
An example of a growing GGS-group satisfying the conditions of the above lemma is the group defined by the vector $(1,-1,0,\overset{p^{m_1}-3}\ldots,0 )$, which one could refer to as a growing Gupta-Sidki group.  We see that for this group $\psi_1(G_{n-1}')\ge G_n'\times \overset{p^{m_n}}\dots\times G_n'$ by Lemma~\ref{lem:G'}. The additional hypothesis then follows 
as in \cite[Thm.~2.13]{FAZR2}, where we see that $g\in \text{St}_{G_{n-1}}(2)$ if and only if,
writing $$g\equiv b_{n-1}^{r_0}\Big(b_{n-1}^{\,a_{n-1}}\Big)^{r_1}\cdots \Big(b_{n-1}^{\,a_{n-1}^{p^{m_n}-1}}\Big)^{r_{p^{m_n}-1}}\pmod {\st_{G_{n-1}}(1)'}$$
we have that $(r_0,r_1,\ldots,r_{p^{m_n}-1} )$ is in the kernel of the circulant $p^{m_n}\times p^{m_n}$ matrix
\[
\left(
\begin{array}{cccccc}
    1 & -1 & 0& 0& \cdots&  0 \\
   0& 1 & -1 & 0& \cdots&  0 \\
    \vdots & & & & &  \vdots \\
     0& \cdots&  0  & 1 & -1 &   0 \\
  0& \cdots&  0  &   0& 1 & -1 
\end{array}\right).
\]
As this kernel is $\langle (1,\overset{p^{m_n}}\ldots,1)\rangle$, it follows that $g\in \text{St}_{G_{n-1}}(2)$ if and only if 
\begin{align*}
    g&\equiv \bigg(b_{n-1}b_{n-1}^{\,a_{n-1}}\cdots b_{n-1}^{\,a_{n-1}^{p^{m_n}-1}}\bigg)^r\pmod {\st_{G_{n-1}}(1)'}\\
    &\equiv b_{n-1}^{\,rp^{m_n}}\pmod {G_{n-1}'}
\end{align*}
for $r\in\mathbb{Z}$.
\end{example}

\begin{theorem}
Let $G$ be a growing GGS-group satisfying the conditions in Lemma~\ref{lem:stabdescription}, and with defining vector $\mathbf{e}$ being non-constant modulo~$p$.
Then $G$ has the $p$-congruence subgroup property.
\end{theorem}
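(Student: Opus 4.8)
The plan is to show that every normal subgroup $N \trianglelefteq G$ of $p$-power index contains $\st_G(n)$ for some $n$. Since $G/G'$ is abelian of the form $\mathbb{Z}/p^{m_1}\mathbb{Z} \times \mathbb{Z}$ by Proposition~\ref{abelianization}, any normal subgroup of $p$-power index contains $\langle G', b^{\,p^{s}}\rangle$ for some $s \geq m_1$ (the $\mathbb{Z}/p^{m_1}\mathbb{Z}$-factor is already a $p$-group, and the $\mathbb{Z}$-factor has $p$-power-index subgroups $p^{s}\mathbb{Z}$). So the crux is: \emph{for every $s$, there exists $n = n(s)$ with $\st_G(n) \leq \langle G', b^{\,p^{s}}\rangle$.} By Lemma~\ref{lem:stabdescription}, for $n \geq 2$ we have $\st_G(n) \leq \langle \psi_{n-2}^{-1}(G_{n-2}' \times \cdots \times G_{n-2}'),\, b^{\,p^{m_{n-1}}}\rangle^G$, and since the sequence $m_1 < m_2 < \cdots$ is strictly increasing, choosing $n-1 \geq $ (the index with $m_{n-1} \geq s$) handles the $b^{\,p^s}$-part of the quotient. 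What remains is to push the ``$G_{n-2}'$ on the deep subtrees'' part into $G'$.

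The key step is therefore to prove that a sufficiently deep level stabiliser lies in $G'$ modulo the $\langle b^{\,p^s}\rangle$-part — more precisely, to establish by induction on depth that $\psi_{k}^{-1}(G_k' \times \cdots \times G_k') \leq G'$ for $k$ large, or at least that iterating Lemma~\ref{lem:stabdescription} collapses everything into $G' \langle b^{p^s}\rangle$. Here I would use the hypothesis $\psi_1(G_{n-1}') \geq G_n' \times \overset{p^{m_n}}{\cdots} \times G_n'$ in the reverse direction: together with $\psi_1(\st_G(1)) \leq G_1 \times \cdots \times G_1$ and the structure of $G/G'$, one gets that an element which is trivial modulo $\st_G(1)'$ and whose sections all lie in $G_1'$ must itself lie in $G'$ up to powers of $b$. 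Concretely, I would argue that $\st_G(n) \cdot \langle b^{\,p^s}\rangle / (\st_G(n)\langle b^{p^s}\rangle \cap G')$ is controlled by finitely many abelian quotients, and iterate the containment from Lemma~\ref{lem:stabdescription}: each application trades a layer of ``$G_k'$ on subtrees'' for the next, while the explicit $b^{\,p^{m_{n-1}}}$ term stabilises once $m_{n-1} \geq s$.

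The main obstacle I expect is the bookkeeping in the induction: Lemma~\ref{lem:stabdescription} only gives $\st_G(n) \leq \langle \psi_{n-2}^{-1}(G_{n-2}' \times \cdots), b^{p^{m_{n-1}}}\rangle^G$, and one must show that repeatedly descending (applying the analogue of the lemma to $G_{n-2}$, then $G_{n-4}$, etc., using that $\overline{m}$ is arbitrary so all $G_j$ satisfy the same hypotheses) terminates with a subgroup contained in $G' \langle b^{p^s}\rangle$ rather than merely in some iterated commutator-plus-power subgroup. The decisive input is that $G'/\gamma_3(G) \cong C_{p^{m_1}}$ and $\st_G(1)/G' \cong \mathbb{Z}$ (Lemma~\ref{lem:first-over-derived}), together with the branching $\psi_1(G_{n-1}') \geq G_n' \times \cdots \times G_n'$, which forces the ``error terms'' at each stage to be absorbed; the hypothesis $\st_{G_{n-1}}(2) \leq \langle b_{n-1}^{\,p^{m_n}}\rangle G_{n-1}'$ is exactly what makes the two-step descent close up. Once the descent is shown to stabilise, comparing indices of $p$-power type finishes the proof.
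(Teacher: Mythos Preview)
Your reduction in the first paragraph is incorrect. You claim that any normal subgroup $N\trianglelefteq G$ of $p$-power index contains $\langle G', b^{p^s}\rangle$ for some $s$, but this is false: the quotient $G/N$ is a finite $p$-group, and finite $p$-groups are typically non-abelian, so there is no reason for $N$ to contain $G'$. What your argument about $G/G'\cong \mathbb{Z}/p^{m_1}\mathbb{Z}\times\mathbb{Z}$ actually shows is that $NG'\supseteq \langle G', b^{p^s}\rangle$, i.e.\ that the \emph{image} of $N$ in the abelianisation has this form; this says nothing about $N$ itself. Consequently, even if you succeed in proving your ``crux'' statement $\st_G(n)\le \langle G', b^{p^s}\rangle$ (which is indeed true and follows quickly from Lemma~\ref{lem:stabdescription} together with the branching hypothesis), you cannot conclude $\st_G(n)\le N$.

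The missing idea is to exploit the full nilpotency of $G/N$, not just its abelianisation. Since $|G/N|=p^k$, the group $G/N$ has nilpotency class at most $k$ and exponent dividing $p^k$; hence $N$ contains $\gamma_{k}(G)$ and $b^{p^k}$. The paper then uses the branching hypothesis $\psi_1(G_{n-1}')\ge G_n'\times\cdots\times G_n'$ together with Lemma~\ref{lem:derived-stab-in-gamma-3} to show, by a short induction on the lower central series, that
\[
\gamma_k(G)\ \ge\ \psi_{k-2}^{-1}\bigl(G_{k-2}'\times\cdots\times G_{k-2}'\bigr).
\]
Combining this with Lemma~\ref{lem:stabdescription} (applied with $n=k+1$, using $m_k\ge k$ so that $b^{p^{m_k}}$ is a power of $b^{p^k}$) yields $\st_G(k+1)\le N$. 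The descent you attempt in your later paragraphs is in spirit the right kind of iteration, but it needs to take place along the lower central series of $G$ rather than just along $G'$, and the termination is guaranteed precisely by the bound on the nilpotency class of $G/N$.
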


\begin{proof}
We need to show that every normal subgroup of $p$-power index  contains $\st_G(n)$ for some~$n$. Fix $N\trianglelefteq G$ of index $p^k$ for some~$k\in\mathbb{N}$. Since $G/N$ is a $p$-group of order~$p^k$, it has exponent at most $p^k$ and nilpotency class at most~$k$ and so in particular, the subgroup~$N$ contains $\gamma_k(G)$, $a^{p^k}$, and $b^{p^k}$. 
By Lemma~\ref{lem:derived-stab-in-gamma-3}, we have $\gamma_3(G)\ge \psi_1^{-1}(G_1'\times \overset{p^{m_1}}\dots\times G_1')$, and as
\[
\gamma_4(G)\ge \psi_1^{-1}(\gamma_3(G_1)\times \overset{p^{m_1}}\dots\times \gamma_3(G_1))\ge \psi_2^{-1}(G_2'\times \overset{p^{m_1+m_2}}\dots\times G_2'),
\]
we deduce that $\gamma_k(G)$ contains $\psi_{k-2}^{-1}(G_{k-2}' \times \overset{p^{m_1+\cdots+m_{k-2}}}\ldots \times G_{k-2}')$;  recall that $\psi_1(G')$ is a subdirect product of $G_1\times\overset{p^{m_1}}\ldots\times G_1$. Therefore by Lemma~\ref{lem:stabdescription} and noting that $m_k\ge k$,  we obtain that $N$ contains $\st_G(k+1)$.
\end{proof}

\begin{lemma}\label{lem:weak-CSP}
Let $G$ be a growing GGS-group defined by $\mathbf{e}\in\mathcal{F}$ which is non-constant modulo~$p$, and such that for all $n$,
\[
\psi_1(G_{n-1}')\ge G_n'\times \overset{p^{m_n}}\dots\times G_n'.
\]
Then $G$ has the weak congruence subgroup property.
\end{lemma}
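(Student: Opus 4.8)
The plan is to reduce the weak congruence subgroup property to a statement about the derived subgroup $\st_G(n)'$ being sandwiched between two congruence subgroups. Recall that $G$ has the weak congruence subgroup property if every finite-index subgroup $H\le G$ contains $\st_G(n)'$ for some $n$. Since $\st_G(n)$ is of finite index in $G$, it suffices to prove that for every finite-index \emph{normal} subgroup $N$ of $G$ we have $\st_G(n)'\le N$ for some $n$; and since $G/\gamma_k(G)\rtimes$-type arguments do not directly apply (as $G/G'$ is infinite), we instead pass to the key observation that $\st_G(n)'\le \st_G(n+1)$ will \emph{not} suffice on its own — we need that $\st_G(n)'$ becomes small quickly in the branch structure.

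First I would set up the branch machinery: by the hypothesis $\psi_1(G_{n-1}')\ge G_n'\times\overset{p^{m_n}}\dots\times G_n'$ together with Theorem~\ref{lem:branch-gamma-3}, the group $G$ is branch, and moreover $\psi_n(\rist_G(n))\ge \langle b_n^{\,p^{m_{n+1}}}\rangle G_n'\times\overset{p^{m_1+\cdots+m_n}}\dots\times\langle b_n^{\,p^{m_{n+1}}}\rangle G_n'$. Then I would invoke the standard fact (see \cite{Segal} or the general theory) that for a branch group, the weak congruence subgroup property follows once one shows that $\rist_G(n)'$ contains a congruence subgroup $\st_G(f(n))$ for some function $f$, equivalently that $\psi_n(\rist_G(n)')$ contains $G_n'\times\cdots\times G_n'$ up to finite index accounted for by a further level. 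The point is that $\rist_G(n)'$ is built from commutators within each coordinate, so $\psi_n(\rist_G(n)')\ge (\langle b_n^{\,p^{m_{n+1}}}\rangle G_n')'\times\cdots$, and one shows $(\langle b_n^{\,p^{m_{n+1}}}\rangle G_n')'\ge G_n''$ together with the fact that $\st_{G_n}(2)'$ or a comparable congruence subgroup lies inside $G_n''$ after descending finitely many more levels, using again the hypothesis applied to the shifted groups $G_n$ (legitimate since $\overline m$ is arbitrary).

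Concretely, the key steps in order are: (1) reduce WCSP to checking finite-index normal subgroups $N$, and observe $N\supseteq \st_G(n)'$ would follow if $N\supseteq \rist_G(n)'$ together with $|G:\rist_G(n)|<\infty$ and a diagonal/spherical-transitivity argument bounding $\st_G(n)'$ by conjugates of $\rist_G(n)'$; actually the cleanest route is: any $N\trianglelefteq G$ of finite index contains $\rist_G(n)$ for some $n$ by branchness only if $G$ has CSP, which it does not — so instead use that $N$ contains $[\rist_G(n),\rist_G(n)]=\rist_G(n)'$ for $n$ large, since $\rist_G(n)/(\rist_G(n)\cap N)$ is a finite-index subgroup of the quotient and $\rist_G(n)'$ is generated by the coordinate-wise derived subgroups which must eventually be killed; (2) show $\psi_n(\rist_G(n)')$ contains $G_n''\times\cdots\times G_n''$ using that commutators of two elements supported on the same coordinate land coordinate-wise; (3) use the weak branch / branch result $\psi_1(G_{n-1}'')\ge G_n''\times\cdots\times G_n''$ (proved earlier in the excerpt for $\mathbf{e}\in\mathcal F$) to iterate, obtaining $\psi_k^{-1}(\text{direct product of }G_k'')\le \rist_G(n)'$ for suitable $k$ depending on $n$; (4) conclude that $\rist_G(n)'$ contains $\st_G(\text{something})'$, and since every finite-index subgroup contains some $\rist_G(n)'$ up to the diagonal argument, we are done; then finally (5) track the indices to ensure the ``something'' is finite, using $m_k\ge k$ as in the $p$-CSP proof.

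The main obstacle I expect is step (1): unlike the $p$-CSP case, a general finite-index subgroup $N$ need not contain $\gamma_k(G)$ for any $k$ (the abelianisation of $G$ is infinite, so $N$ could, for instance, be $\langle b^q, \text{stuff}\rangle$ of arbitrarily large index with small ``depth''). So I would need the genuinely branch-theoretic input that in a branch group, every finite-index subgroup $N$ satisfies $\rist_G(n)'\le N$ for some $n$ — this is precisely Segal's argument underlying the weak congruence subgroup property, relying on $|G:\rist_G(n)|<\infty$ so that $\rist_G(n)\cap N$ has finite index in $\rist_G(n)$, hence contains the derived subgroup of a finite-index subgroup of a product of copies of $G_n$, and then using that $G_n$ is (weakly) branch with $G_n''$ large so that $[\rist_G(n),\rist_G(n)]$ is itself inside $N$ after passing down. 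Making this precise — and ensuring the resulting bound is a genuine congruence subgroup $\st_G(N)'$ rather than merely $\rist_G(N)'$, which requires the branchness established via Theorem~\ref{lem:branch-gamma-3} — is where the real work lies; the rest is bookkeeping with the subdirect-product hypotheses already available for the shifted groups.
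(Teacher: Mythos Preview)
Your plan is essentially the paper's approach: invoke the standard branch-group fact (Grigorchuk's \cite[Thm.~4]{NewHorizons}, which is what your ``Segal's argument'' amounts to) that every finite-index subgroup $H$ contains $\psi_n^{-1}(G_n''\times\cdots\times G_n'')$ for some $n$, and then show this in turn contains some $\st_G(k)'$. So the high-level strategy is right, and your identification of step~(1) as the crucial black-box input is correct.

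Where your write-up diverges from the paper is in emphasis and execution. You treat step~(1) as ``where the real work lies'' and steps (2)--(5) as bookkeeping, but in fact step~(1) is a direct citation; the actual new content is the short chain you gloss over. The paper does it in three lines: since $\psi_1(G')$ is subdirect in $G_1\times\cdots\times G_1$ and contains $G_1'\times\cdots\times G_1'$, one gets $\psi_1(G'')\ge\gamma_3(G_1)\times\cdots\times\gamma_3(G_1)$; combined with Lemma~\ref{lem:derived-stab-in-gamma-3} (which gives $\psi_1(\gamma_3(G_{n}))\ge G_{n+1}'\times\cdots$) and the trivial $\psi_k(\st_G(k)')\le G_k'\times\cdots$, this yields
\[
H\;\supseteq\;\psi_n^{-1}(G_n''\times\cdots)\;\supseteq\;\psi_{n+1}^{-1}(\gamma_3(G_{n+1})\times\cdots)\;\supseteq\;\psi_{n+2}^{-1}(G_{n+2}'\times\cdots)\;\supseteq\;\st_G(n+2)'.
\]
Your detour through $\rist_G(n)'$ and the iteration $\psi_1(G_{k-1}'')\ge G_k''\times\cdots$ is not wrong, but it never makes explicit the step that gets you from $G_n''$ down to containing some $\st_G(\ell)'$; that step is precisely the $\gamma_3$ link above, which you only gesture at with ``descending finitely many more levels''. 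Your step~(5) tracking $m_k\ge k$ is unnecessary here --- no index-counting is needed once the chain is set up.
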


\begin{proof}
Let $H$ be a finite-index subgroup of~$G$. From the proof of \cite[Thm.~4]{NewHorizons}, it follows that $\psi_n^{-1}(G_n''\times \overset{p^{m_1+\cdots+m_n}}\dots\times G_n'')\le H$ for some $n$. Since $\psi_n(\st_G(n)')\le G_n'\times \overset{p^{m_1+\cdots+m_n}}\dots\times G_n'$,
from Lemma~\ref{lem:derived-stab-in-gamma-3}, it suffices to show that $\psi_1(G'')\ge \gamma_3(G_1)\times\overset{p^{m_1}}\dots\times \gamma_3(G_1)$. This follows from the fact that $\psi_1(G')\ge G_1'\times\overset{p^{m_1}}\dots\times G_1'$ together with the fact that $\psi_1(G')$ is a subdirect product of  $G_1\times\overset{p^{m_1}}\ldots\times G_1$.
\end{proof}

Thus we also obtain  the first examples of branch groups without the congruence subgroup property, but with the $p$-congruence subgroup property and the weak congruence subgroup property. Prior to this, the only examples of this kind were the weakly branch, but not branch, $p$-Basilica groups~\cite{DNT}.

To end this section, we remark that the assumption $\psi_1(G_{n-1}')\ge G_n'\times \overset{p^{m_n}}\dots\times G_n'$ (respectively $\psi_1(\gamma_3(G_{n-1}))\ge \gamma_3(G_n)\times \overset{p^{m_n}}\dots\times \gamma_3(G_n)$) for all $n$, in Lemmata~\ref{lem:derived-stab-in-gamma-3}, 
\ref{lem:stabdescription}, \ref{lem:weak-CSP},  Theorem~\ref{lem:branch-gamma-3}, and Corollary~\ref{cor:saturated} can be replaced by just the case for $n=1$. Indeed, this follows from the definition of $b_1$ together with Lemmata~\ref{lem:G'} and~\ref{lem:branching-subgroup}.

\smallskip


\section{Maximal subgroups}\label{sec:maximal}
In the present section we prove Theorem~\ref{start}. 
As is usually done,
we rephrase the argument in terms of proper prodense subgroups.

\subsection{Prodense subgroups}
We recall that a subgroup~$H$
of a group~$G$ is \textit{prodense} if $G=HN$ for all non-trivial normal subgroups~$N$ of~$G$; see~\cite[Def.~1.2]{GeGl}. By~\cite[Prop.~2.22]{Francoeur-paper}, for a finitely generated branch group, having maximal subgroups of infinite index is equivalent to having proper prodense subgroups. Using results on prodense subgroups from~\cite{Francoeur-paper}, we will prove Theorem~\ref{start} by showing that  those branch growing GGS-groups do not have proper prodense subgroups.

Let $G$ be a group acting on a spherically homogeneous rooted tree. For a vertex 
$u$ of the tree, we write 
\[
G_u = \varphi_u(\text{st}_G(u)).
\]

We require the following  results: 

\begin{lemma}\label{lem:prodense}\cite[Lem.~3.1]{Francoeur-paper}
Let $G$ be a  weakly branch group acting  on a spherically homogeneous rooted tree. Suppose $H\le G$ is a prodense subgroup and let $u$ be a  vertex of the tree. Then $H_u$ is a prodense subgroup of $G_u$.
\end{lemma}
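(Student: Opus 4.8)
The statement to prove is Lemma~\ref{lem:prodense} itself (the last displayed statement in the excerpt), which is quoted from \cite[Lem.~3.1]{Francoeur-paper}. Since it is cited rather than proved in this paper, let me reconstruct the proof.

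\textbf{Proof.} The plan is to reduce to the case where $u$ is a first-level vertex and then iterate. So first I would observe that $\varphi_u = \varphi_{u_k} \circ \cdots \circ \varphi_{u_1}$ along the path $\varnothing = u_0, u_1, \ldots, u_k = u$ from the root to $u$, where each $u_{j+1}$ is a child of $u_j$ inside the relevant shifted tree; hence by induction it suffices to treat $u$ a single child of the root, i.e.\ $u = x \in X_1$.

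Next, let $H \le G$ be prodense and fix $x \in X_1$. I want to show $H_x = \varphi_x(\mathrm{st}_H(x))$ is prodense in $G_x = \varphi_x(\mathrm{st}_G(x))$. Let $M \trianglelefteq G_x$ be a non-trivial normal subgroup; I must show $G_x = H_x M$. The key point is that, because $G$ is weakly branch and acts spherically transitively, the rigid vertex stabiliser $\mathrm{rist}_G(x)$ is non-trivial, and $\varphi_x(\mathrm{rist}_G(x))$ is a non-trivial normal subgroup of $G_x$ contained in $G_x$; moreover $\varphi_x$ maps $\mathrm{rist}_G(x)$ isomorphically onto a normal subgroup $R \trianglelefteq G_x$. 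One then forms the normal subgroup $N \trianglelefteq G$ generated by the preimage in $\mathrm{rist}_G(x)$ of $M \cap R$ (note $M \cap R$ is a non-trivial normal subgroup of $R$ because $M$ is non-trivial normal in $G_x$ and $R$ is non-trivial normal in $G_x$, using that in a spherically transitive action the $G$-conjugates of $\mathrm{rist}_G(x)$ intersect trivially only across distinct vertices, so $R$ has non-trivial intersection with any non-trivial normal subgroup — this is the standard fact that normal subgroups of branch/weakly branch groups contain a rigid level stabiliser's piece). Then $N$ is a non-trivial normal subgroup of $G$, so prodensity of $H$ gives $G = HN$.

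Now I would push this equality through $\varphi_x$ restricted to $\mathrm{st}_G(x)$. Since $N \le \mathrm{rist}_G(x) \le \mathrm{st}_G(x)$, intersecting $G = HN$ with $\mathrm{st}_G(x)$ and using the modular law gives $\mathrm{st}_G(x) = \mathrm{st}_H(x)\, N$ (here one uses $N \le \mathrm{st}_G(x)$ so that $(HN)\cap \mathrm{st}_G(x) = (H \cap \mathrm{st}_G(x))N = \mathrm{st}_H(x) N$). Applying $\varphi_x$ yields $G_x = H_x\, \varphi_x(N) \le H_x\, (M\cap R) \le H_x M$, hence $G_x = H_x M$, as required. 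Finally, iterating along the path to a general vertex $u$ completes the proof.

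\textbf{Main obstacle.} The delicate point is arranging that the normal subgroup $N \trianglelefteq G$ produced really does have image (under $\varphi_x$) landing inside the prescribed $M$: one needs $M \cap \varphi_x(\mathrm{rist}_G(x))$ to be non-trivial for \emph{every} non-trivial normal $M \trianglelefteq G_x$. This is exactly the weakly-branch input — in a spherically transitive weakly branch action, $\varphi_x(\mathrm{rist}_G(x))$ meets every non-trivial normal subgroup of $G_x$ non-trivially, because $\mathrm{rist}_{G_x}(w) \ge \varphi_x(\mathrm{rist}_G(xw))$ is non-trivial for all $w$ and a non-trivial normal subgroup must move some vertex and hence contain a non-trivial rigid vertex stabiliser element of $G_x$, which after descending far enough lies in $\varphi_x(\mathrm{rist}_G(x))$ after conjugation; care is needed to phrase this cleanly, and it is the heart of \cite[Lem.~3.1]{Francoeur-paper}.
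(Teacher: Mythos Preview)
The paper does not prove this lemma; it is simply quoted from \cite[Lem.~3.1]{Francoeur-paper}. Your reconstruction follows the standard strategy and is essentially correct, but two points deserve correction.

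First, you write ``$N \le \mathrm{rist}_G(x) \le \mathrm{st}_G(x)$'', but this is false: $N$ is the \emph{normal closure in $G$} of a subset $K$ of $\mathrm{rist}_G(x)$, so conjugation by elements moving $x$ carries pieces of $N$ into $\mathrm{rist}_G(y)$ for other first-level vertices~$y$. What is true is $N \le \prod_{y\in X_1}\mathrm{rist}_G(y) \le \mathrm{St}_G(1) \le \mathrm{st}_G(x)$, and this suffices for the modular-law step. You then need the extra observation that for $k\in K$ and $g\notin \mathrm{st}_G(x)$ one has $k^g\in \mathrm{rist}_G(x^g)$ acting trivially on $T_x$, hence $\varphi_x(k^g)=1$; this is what guarantees $\varphi_x(N)\le M\cap R$.

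Second, your ``main obstacle'' --- that $M\cap R\ne 1$ for every non-trivial normal $M\trianglelefteq G_x$ --- is indeed the crux, and your justification is too vague to count as a proof. A clean argument: pick $1\ne m\in M$ and a vertex $w$ of $T_x$ with $w^m\ne w$; choose $1\ne s\in \varphi_x(\mathrm{rist}_G(xw))$, which is possible since $G$ is weakly branch and $\varphi_x$ is injective on $\mathrm{rist}_G(xw)\le\mathrm{rist}_G(x)$. As $R=\varphi_x(\mathrm{rist}_G(x))$ is normal in $G_x$, the commutator $[m,s]=s^{-1}s^m$ lies in $M\cap R$, and it is non-trivial because $s^{-1}$ and $s^m$ are supported on the disjoint subtrees $T_w$ and $T_{w^m}$. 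With these two fixes your argument goes through.
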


\begin{theorem}\label{thm:proper}\cite[Thm.~3.2]{Francoeur-paper}
Let $G$ be a  weakly branch group acting on a spherically homogeneous rooted tree and let
$H\le G$ be a prodense subgroup.  Then $H\ne G$ if and only if $H_u\ne G_u$ for any vertex~$u$ of the tree.
\end{theorem}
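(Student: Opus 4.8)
Read the statement as: $H\neq G$ if and only if $H_u\neq G_u$ for \emph{every} vertex~$u$. One direction is immediate, since $H=G$ forces $H_u=\varphi_u(\st_H(u))=\varphi_u(\st_G(u))=G_u$ for every~$u$, while conversely $H_\varnothing=H$ and $G_\varnothing=G$. So the substance is the forward implication, which in contrapositive form reads: \emph{if $H_u=G_u$ for some vertex~$u$, then $H=G$}. This is what I would prove, in three formal steps followed by one genuinely hard one.

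\textbf{Step 1: reduce to the first level.} Suppose $H_u=G_u$ with $u$ at level $n\ge 1$, let $u'$ be the parent of~$u$ and $\bar u$ the corresponding first-level vertex of~$T_{u'}$. Since sections compose, $G_u=(G_{u'})_{\bar u}$ and $H_u=(H_{u'})_{\bar u}$, so $(H_{u'})_{\bar u}=(G_{u'})_{\bar u}$; moreover $G_{u'}$ is again weakly branch and, by Lemma~\ref{lem:prodense}, $H_{u'}$ is prodense in $G_{u'}$. So the first-level case applied inside~$T_{u'}$ would give $H_{u'}=G_{u'}$, and iterating down the path to the root would yield $H=H_\varnothing=G_\varnothing=G$. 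Hence it suffices to treat $u=x$ at level~$1$.

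\textbf{Step 2: spread along the first level.} As $H$ is prodense and each $\st_G(n)\neq 1$, we have $G=H\,\st_G(n)$ for every~$n$, so $H$ acts spherically transitively, in particular transitively on the first level. Given a first-level vertex~$y$, choose $h\in H$ with $x^h=y$; because $h\in H$, conjugation by~$h$ carries $\st_G(x)$ onto $\st_G(y)$ \emph{and} $\st_H(x)$ onto $\st_H(y)$, whence $G_y$ and $H_y$ are obtained from $G_x$ and $H_x$ by conjugation by the one element $h_x$ (the section of $h$ at~$x$). Therefore $H_y=G_y$ for \emph{every} first-level vertex~$y$. Furthermore, as $G$ is weakly branch, $\rist_G(y)\neq 1$; since $G$ is transitive on the first level, $\rist_G(y)^G=\prod_{z}\rist_G(z)=\rist_G(1)$, so prodensity gives $G=H\,\rist_G(1)$, and, since $\rist_G(1)\le\st_G(1)$, Dedekind's law yields $\st_G(1)=\st_H(1)\,\rist_G(1)$. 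It now suffices to show $\rist_G(1)\le H$, as then $G=H\,\rist_G(1)=H$.

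\textbf{Step 3 (the main obstacle): show $\rist_G(1)\le H$.} Restricting $\st_G(1)=\st_H(1)\,\rist_G(1)$ to a first-level subtree~$T_y$ gives $\varphi_y(\st_G(1))=\varphi_y(\st_H(1))\cdot\varphi_y(\rist_G(y))$, so $\st_H(1)$ realises every section in $\varphi_y(\st_G(1))$ modulo the image of $\rist_G(y)$; combined with $H_y=G_y$ one would like to read off $\varphi_y(\rist_G(y))\le\varphi_y(\st_H(1))$, hence $\rist_G(y)\le H$ coordinate by coordinate. The difficulty is that the action need not be (relatively) strongly $\sigma$-fractal: $\varphi_y(\st_H(1))$ may be proper in $\varphi_y(\st_G(1))$, and an element of $\st_H(1)$ with the prescribed section at~$y$ can differ from the target element of $\rist_G(y)$ by an automorphism that is trivial on~$T_y$ but active on the other first-level subtrees, so one cannot simply project. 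My plan to get around this is to use prodensity again at deeper levels: put $D=H\cap\rist_G(1)$; then $D\trianglelefteq H$, and applying Dedekind's law to $G=HN$ for each $G$-normal $1\neq N\le\rist_G(1)$ gives $\rist_G(1)=DN$, so $D$ is ``prodense in $\rist_G(1)$ relative to $G$-normal subgroups''. One then runs a descending induction through the levels of~$T_y$, at each level invoking the non-triviality of a suitable $G$-normal subgroup of $\rist_G(y)$ supported below the relevant vertex together with the equalities $H_v=G_v$ already available there, to capture $\rist_G(w)$ for vertices~$w$ ever deeper in~$T_y$, and finally assembling these into $\rist_G(y)\le H$. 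Making this descending induction precise, and in particular controlling how the ``trivial on $T_y$'' corrections get absorbed, is the step I expect to be the real work; Steps~1 and~2 are formal consequences of Lemma~\ref{lem:prodense}, spherical transitivity, and the definitions.
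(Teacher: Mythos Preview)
The paper does not supply its own proof of this theorem: it is quoted from \cite[Thm.~3.2]{Francoeur-paper} and used as a black box, so there is no in-paper argument to compare your attempt against. Let me nonetheless comment on the proposal itself.

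Steps~1 and~2 are correct and are the natural reduction. The inductive passage to the first level via Lemma~\ref{lem:prodense}, the spreading along the first layer by transitivity of~$H$, and the identity $\st_G(1)=\st_H(1)\cdot\rist_G(1)$ obtained from prodensity and Dedekind are all fine. You have also correctly isolated the real content: showing $\rist_G(1)\le H$ once $H_y=G_y$ holds for every first-level vertex~$y$.

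Step~3, however, is a genuine gap which your sketch does not close. The relation you derive, $D\cdot N=\rist_G(1)$ for every $G$-normal $1\neq N\le\rist_G(1)$ (equivalently $D\cdot\rist_G(m)=\rist_G(1)$ for all $m\ge 1$), says only that $D=H\cap\rist_G(1)$ is dense in $\rist_G(1)$ for the congruence filtration; without a topological argument or some further structural input this does not give $D=\rist_G(1)$. Your ``descending induction'' would, at each stage, write a given $r\in\rist_G(1)$ as $r=d_m r_m$ with $d_m\in D$ and $r_m\in\rist_G(m)$, but nothing forces the tail~$r_m$ to vanish in finitely many steps, and there is no limit available inside~$H$. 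You also have not said concretely how the equalities $H_v=G_v$ at deeper vertices feed back in: knowing $\varphi_v(\st_H(v))=G_v$ does not by itself let you lift an element of $\rist_G(v)$ into~$H$, precisely because of the failure of strong fractality you already flagged. As written, the proposal reduces the theorem to the right core assertion but leaves that assertion unproved; for a complete argument you will need to consult Francoeur's paper directly.
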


Recall that $T_{\overline{m}}$ denotes the rooted tree with branching sequence $p^{\overline{m}}=(p^{m_1},p^{m_2},\ldots)$ where $m_1<m_2<\cdots$ are increasing positive integers. For the rest of the section, let
$G =\langle a,b\rangle$ be a branch growing GGS-group acting on $T=T_{\overline{m}}$. The strategy for proving Theorem~\ref{start} is to show that,  for $M$ a prodense subgroup of~$G$, there exists a vertex~$u$ at some level $i\in\mathbb{N}\cup \{0\}$ of~$T$ such that $a_i,b_i\in M_u$. We will do this in two steps: first by obtaining the $b$-generator, and then the $a$-generator.

\subsection{Obtaining $b$}

 For $\mathbf{e}=(e_1,\ldots,e_{p^{m_1}-1})$ the defining vector associated to~$G$, we recall the torsion-type condition~\eqref{eq:periodicity}:  
\[
\sum_{k=1}^{p^{m_1-i}-1} e_{kp^i} \equiv 0 \pmod {p^{i+1}}\quad\text{for all  }i\in\{0,\ldots,m_1-1\}.
\]

\begin{proposition}\label{proposition: first}
  Let $G= \langle a,b\rangle$ be a 
   branch growing GGS-group with defining vector satisfying the torsion-type condition, and let $M$ be a prodense  subgroup of~$G$.  Then there is a vertex $u$ of~$T$, of some level $i$, 
  such that  
  $b_i^{\,\delta}\in (M_u)^{a_i^{\,k}}$ for some $\delta\not\equiv 0 \pmod p$ and for some $k$. 
\end{proposition}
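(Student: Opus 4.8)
The plan is to exploit the $\sigma$-fractal structure together with the length-reduction provided by Lemma~\ref{shortening}, and to work on a carefully chosen subtree where the prodense subgroup's restriction is still prodense (by Lemma~\ref{lem:prodense}). First, I would fix a prodense subgroup $M\le G$ and consider a non-trivial element $g\in M$; without loss of generality, using spherical transitivity and the fact that $\st_G(1)$ has finite index, I may pass to a vertex and assume $g\in M\cap\st_G(1)$, with some non-trivial section $g_j$ at a first-level vertex. Applying Lemma~\ref{lem:prodense} repeatedly along a suitable ray, I reduce to showing the statement for $M_u$ inside $G_i$, and hence (since $\overline{m}$ is arbitrary and $G$ is $\sigma$-fractal) it suffices to produce, inside a prodense subgroup of some $G_i$, an element of the form $b_i^{\,\delta}$ up to conjugation by a power of $a_i$, with $\delta\not\equiv 0\pmod p$.

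Next, the core computation: starting from an arbitrary element $h$ of the prodense subgroup with $\epsilon_a(h)\not\equiv 0$ or $\epsilon_b(h)\ne 0$, I would combine it with the generators to manufacture an element whose $b$-exponent is non-zero but whose $a$-exponent vanishes, i.e.\ an element of $\st_G(1)$ that projects non-trivially to $\st_G(1)/G'\cong\mathbb Z$ (Lemma~\ref{lem:first-over-derived}). Because $M$ is prodense, $MG'=M\gamma_3(G)=\cdots=M\st_G(1)'=G$ and, crucially, $M$ surjects onto every finite quotient; in particular $M$ maps onto $G/\st_G(1)'$, so $M$ contains an element congruent modulo $\st_G(1)'$ to $b\cdot w$ for $w\in\st_G(1)'$, and more generally to $b^{a^k}$-type words. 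The torsion-type condition~\eqref{eq:periodicity} enters exactly here: it is the condition ensuring that taking the ``$b$-part'' of such an element and iterating $\psi_1$ does not force the $a$-exponents of the sections to accumulate uncontrollably, so that after finitely many applications of $\psi_1$ and the length-reduction of Lemma~\ref{shortening} one lands on a section lying in $\langle b_i\rangle\st_{G_i}(1)'$ with non-trivial $b_i$-exponent. Reducing this section further modulo $G_i'$ and using that $\st_{G_i}(1)/G_i'\cong\mathbb Z$ is generated by the image of $b_i$, one obtains $b_i^{\,\delta}$ modulo $G_i'$; a final application of the branching hypotheses (Theorem~\ref{lem:branch-gamma-3}, giving $\psi_n(\rist_G(n))\ge \langle b_n^{\,p^{m_{n+1}}}\rangle\gamma_3(G_n)\times\cdots$) together with prodensity lets me correct the $G_i'$-error inside $M_u$, yielding a genuine element $b_i^{\,\delta}$ (or its conjugate by $a_i^{\,k}$) in $M_u$.

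The main obstacle I anticipate is the bookkeeping in the descent: each application of $\psi_1$ both shortens the element (good, by Lemma~\ref{shortening}) and spreads its data across $p^{m_i}$ coordinates with $a$-exponents twisted by the $e_j$'s, and one must guarantee that somewhere in this tree of sections the $a$-exponent is controllably non-zero-free while the $b$-exponent stays non-zero modulo~$p$. Condition~\eqref{eq:periodicity}, summing the relevant $e_{kp^i}$ to $0$ modulo $p^{i+1}$, is precisely what kills the obstruction to this — it is the analogue of the computation in~\cite{vov} ensuring the relevant circulant-type relations hold — but verifying that it suffices at \emph{every} level of the descent, uniformly over the arbitrary sequence $\overline m$, is where the real work lies. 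A secondary subtlety is that $b_i$ has infinite order (unlike in the classical GGS case), so one cannot replace ``$b_i^{\,\delta}$ for some $\delta\not\equiv0\pmod p$'' by ``$b_i$'' for free; the statement is deliberately phrased to tolerate a non-trivial power, and one must be careful that the $\delta$ produced is genuinely coprime to $p$, which again is where~\eqref{eq:periodicity} is used rather than merely $\mathbf e\in\mathcal F$.
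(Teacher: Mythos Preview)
Your proposal has the right starting point --- using prodensity to find $x\in M\cap bG'$ and then descending via $\psi_1$ with the length-reduction of Lemma~\ref{shortening} --- but it has a genuine gap and a circular step.

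The gap is that iterating $\psi_1$ only makes sense on elements of the level stabiliser, and there is no reason the section $\varphi_j(x)$ with $\epsilon_{b_1}(\varphi_j(x))\not\equiv 0\pmod p$ lies in $\st_{G_1}(1)$. The paper's proof handles this head-on: writing $\varphi_j(x)=a_1^{\,\alpha}h$ with $\alpha\ne 0$, one raises to the power $p^{m_2}$ (if $\alpha\not\equiv 0\pmod p$) or $p^{m_2-d}$ (if $p^d\,\|\,\alpha$) to kill the $a_1$-part, and checks that this preserves $\epsilon_{b}\not\equiv 0\pmod p$ and does not increase length. This power trick is the core mechanism, and it is precisely here that condition~\eqref{eq:periodicity} is used: it guarantees that each time one takes such a power and passes to a section, the $a$-exponent acquires an extra factor of~$p$, so after finitely many iterations the element genuinely lies in the stabiliser. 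Your description of~\eqref{eq:periodicity} as preventing ``$a$-exponents from accumulating uncontrollably'' is in the right spirit but does not identify this mechanism, and without it the descent stalls the first time a section has nonzero $a$-exponent.

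The circular step is your final ``correction of the $G_i'$-error''. Obtaining an element of $M_u$ congruent to $b_i^{\,\delta}$ modulo $G_i'$ is exactly what prodensity gives you for free at the outset (that is how $x\in M\cap bG'$ is produced), so it is not progress; and neither prodensity nor the branching containment $\psi_n(\rist_G(n))\ge\langle b_n^{\,p^{m_{n+1}}}\rangle\gamma_3(G_n)\times\cdots$ lets you replace $b_i^{\,\delta}c$, $c\in G_i'$, by $b_i^{\,\delta}$ itself inside $M_u$. The paper avoids any such correction entirely: its induction on $|x|$ terminates at $|x|=1$, where the element is literally $b^{a^k}$, giving $b\in (M_u)^{a^{-k}}$ on the nose.
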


\begin{proof}
  Since $M$ is prodense and $G'$ is a non-trivial normal subgroup,  we can find some element $x
  \in M \cap b G'$.  In particular $x \in \st_G(1)$ with
  $\epsilon_{b}(x) =1\not\equiv 0\pmod p$.  We proceed by induction
  on $|x| \geq 1$.

  When $|x| = 1$, we have $x =
  b^{a^k}$.  
  Thus choosing the vertex $u$ to be the root of
  the tree $T$, we have $b \in (M_u)^{a^{-k}}$. 

  Now suppose that $|x| \ge 2$.  
 We claim that
  \begin{equation}\label{eq:1}
    \epsilon_{b_1}(\varphi_1 (x)) + \cdots + \epsilon_{b_1}(\varphi_{p^{m_1}}
    (x)) = \epsilon_{b} (x) =1. 
  \end{equation}
  Indeed, upon writing $x$ as a product of conjugates $b^{a^*}$,
  for some    unspecified exponents~$*$, we see that
  $\epsilon_{b} (x)$ is the  exponent sum  of all the
  $b^{a^*}$.  As each  $b^{a^*}$ gives~$b_1$ in a unique coordinate, the claim follows.

  By Equation~\eqref{eq:1}, there exists $j \in \{ 1,\ldots,p^{m_1} \}$ such that
  $\epsilon_{b_1} (\varphi_j(x))\not\equiv 0\pmod p$.  Moreover,
  Lemma~\ref{shortening} shows that $|\varphi_j(x)| 
  \le 
    (|x| + 1)/2  < |x|$.
  
  \medskip

  \underline{Case 1:} Let $\tilde x = \varphi_j(x)$ and suppose that $\tilde x \in M_{u(j)}$ belongs to
  $\st_{G_{u(j)}}(1)$; here we write $u(j)$ instead of $u_j$ for
  readability. By Lemma~\ref{lem:prodense}, the subgroup $M_{u(j)}$ is
  prodense in $G_{u(j)} = G_1$.  Since by our set-up $\epsilon_{b_1}(\tilde x) \not\equiv 0 \pmod p$ and $|\tilde x| < |x|$, the
  result follows by induction.

  \medskip

   \underline{Case 2:} Now suppose that $\varphi_j (x) \not \in \st_{G_{u(j)}}(1)$.  Write $\varphi_j (x) =a_1^{\,\alpha}h$ for $\alpha \not \equiv 0 \pmod{p^{m_2}}$ and $h \in \st_{G_{u(j)}}(1)$ with
  $\psi_1(h)= (h_1,\ldots, h_{p^{m_2}})$.
  
  \smallskip 
  
  \textit{Subcase (a):} Suppose $\alpha \not\equiv 0 \pmod{p}$.  For
  $\ell \in \{ 1, \ldots, p^{m_2}\}$ we claim that
  \begin{equation}\label{eq:2}
    \epsilon_{b_2}\big(\varphi_\ell (\varphi_j (x)^{p^{m_2}})\big) =
    \epsilon_{b_1} (\varphi_j (x)) \not\equiv 0\pmod p.
  \end{equation}
 Indeed,
  \[
  \varphi_j(x)^{p^{m_2}} = (a_1^{\,\alpha} h)^{p^{m_2}} = {h^{a_1^{\,(p^{m_2}-1)\alpha }}} {h^{a_1^{\,(p^{m_2}-2)\alpha }}}
  \cdots h^{a_1^{\,\alpha}} h,
  \]
 as $a_1^{\,p^{m_2}}=1$ and thus for any $\ell$,
  \[
  \varphi_\ell (\varphi_j (x)^{p^{m_2}}) \equiv h_1 h_2 \cdots h_{p^{m_2}}
  \pmod{G_{u(j\ell)}'}
  \]
  where $u(j\ell)=u_{j\ell}$ denotes the $\ell$th descendant of $u_j$.  Arguing
  similarly as for Equation~\eqref{eq:1}, this yields
  \[
  \epsilon_{b_2}(\varphi_\ell(\varphi_j(x)^{p^{m_2}})) =
  \epsilon_{b_2}(h_1)+\cdots +\epsilon_{b_2}(h_{p^{m_2}}) =
  \epsilon_{b_1}(h) =\epsilon_{b_1}(\varphi_j(x))
  \]
  and thus Equation~\eqref{eq:2} holds.

 Note that
  \begin{equation} \label{eq:got-shorter}
    |\varphi_\ell (\varphi_j(x)^{p^{m_2}})| \le |\varphi_j
    (x)| < |x|. 
  \end{equation}
  Indeed, Lemma~\ref{shortening} gives the second inequality. For the first inequality, we note that
  \[
  \varphi_\ell (\varphi_j (x)^{p^{m_2}}) = \varphi_\ell (h^{a_1^{\,(p^{m_2}-1)\alpha }})
  \cdots \varphi_\ell (h^{a_1^{\,\alpha}}) \varphi_\ell (h),
  \]
  and $|\varphi_j(x)| = |h|$.   We observe that $h=(b_1^*)^{a_1^*}\overset{|h|}\cdots(b_1^*)^{a_1^*}$, where $*$ denotes unspecified exponents. Each 
  $(b_1^*)^{a_1^*}$ gives~$b_2^*$ in a unique
  coordinate and powers of~$a_2$ in all other coordinates.  Therefore, we can write $\varphi_\ell (\varphi_j
  (x)^{p^{m_2}})$ as a product of powers of $a_2$ and the $|h|$ 
  automorphisms $b_2^*$. 
  Thus
 Equation~\eqref{eq:got-shorter} holds.

Since, due to the torsion-type condition,  the total exponent of~$a_2$ amongst the components of~$\psi_1(b_1)$ is a multiple of~$p$,  this implies that the total exponent of~$a_2$ in $ \varphi_\ell(\varphi_j(x)^{p^{m_2}})$ is a multiple of~$p$.
 
 If $\varepsilon_{a_2}(\varphi_\ell(\varphi_j(x)^{p^{m_2}}))=0$, then
 $\tilde x = \varphi_\ell(\varphi_j(x)^{p^{m_2}}) \in M_{u(j \ell)}$
  belongs to $\st_{G_{u(j \ell)}}(1)$.  As before, the subgroup $M_{u(j \ell)}$ is prodense in
  $G_{u(j \ell)} = G_2$.  Since $\epsilon_{b_2}(\tilde x) \not\equiv 0 \pmod p$ and $|\tilde x| < |x|$, the
  result follows by induction.

 If $\varepsilon_{a_2}(\varphi_\ell(\varphi_j(x)^{p^{m_2}}))\ne 0$, then we proceed as in the next subcase.
 
 \smallskip 
 
   \textit{Subcase (b):} Suppose $\varphi_j(x)=a_1^{\alpha}h$ with $\alpha \neq 0$ but $\alpha \equiv 0 \pmod{p}$. Then $\alpha=\lambda p^{d_2}$ for some $\lambda\not\equiv 0\pmod p$ and $1\le d_2<m_2$. We consider the components of $\psi_1\big(\varphi_j(x)^{p^{m_2-d_2}}\big)$.
   Certainly there exists some 
  $\ell$ such that $ \epsilon_{b_2}(\varphi_\ell (\varphi_j (x)^{p^{m_2-d_2}}))\not\equiv 0 \pmod p$. If $\tilde{x}=\varphi_\ell (\varphi_j (x)^{p^{m_2-d_2}})$ is in $\st_{G_2}(1)$ or yields an appropriate element in $\st_{G_3}(1)$ as seen in Case 2(a),  
  we proceed by induction, noting that $$
  |\varphi_\ell (\varphi_j (x)^{p^{m_2-d_2}})|\le| \varphi_j (x)|<|x|.
  $$
  Therefore suppose we are back in Case 2(b). 
 
 Without loss of generality,
 we may assume that $|\varphi_\ell (\varphi_j (x)^{p^{m_2-d_2}})|=| \varphi_j (x)|$. Indeed,  if $|\varphi_\ell (\varphi_j (x)^{p^{m_2-d_2}})|<| \varphi_j (x)|$, we simply replace $\varphi_j(x)$ with $\varphi_\ell (\varphi_j (x)^{p^{m_2-d_2}})$ and proceed as before. Now the fact that $|\varphi_\ell (\varphi_j (x)^{p^{m_2-d_2}})|=| \varphi_j (x)|$ means that, 
  writing 
  \[
  h= (b_1^*)^{a_1^*}\cdots(b_1^*)^{a_1^*}
  \]
  as a product of conjugates of~$b_1$, we have that  all exponents of~$a_1$ differ from one another by a multiple of~$p^{d_2}$. Let $\tilde{x}=\varphi_\ell (\varphi_j (x)^{p^{m_2-d_2}})= a_2^{\,\alpha_2}h_2$ for some $\alpha_2\ne 0$ and $h_2\in\st_{G_{u(j\ell)}}(1)$. From condition~\eqref{eq:periodicity}, it follows that   $p^{d_2+1}$ divides~$\alpha_2$.

  We now repeat the above process, that is, we let $1\le d_3 <m_3$ be the highest power of~$p$ that divides the total exponent of~$a_2$ in~$\tilde{x}$. As before, there exists an 
  $\ell_3 \in \{ 1,2,\ldots,  p^{m_3}\}$ such that $ \epsilon_{b_3}(\varphi_{\ell_3} (\tilde{x}^{p^{m_3-d_3}}))\not\equiv 0 \pmod p$. Likewise, if $\tilde{x}_3=\varphi_{\ell_3} (\tilde{x}^{p^{m_3-d_3}})$ is in $\st_{G_3}(1)$ or yields an appropriate element in $\st_{G_4}(1)$ as seen in Case 2(a),  
  we proceed by induction.
  So we suppose that we are again in Case~2(b) and that $|\tilde{x}_3|=|\tilde{x}|$. As the condition~\eqref{eq:periodicity} ensures that $\varepsilon_{a_3}(\tilde{x}_3)$ is divisible by $p^{d_2+2}$, and similarly for $\tilde{x}_4, \tilde{x}_5,\ldots$, if we are always in  Case 2(b) with $|\tilde{x}_n|=|\tilde{x}|$, then for some $n\ge 3$, we have $\tilde{x}_n\in\st_{G_n}(1)$ since $e_{k p^{m_1}}=0$ in the defining vector of~$b_n$, for $k\in\{1,\ldots, p^{m_n-m_1}-1\}$. Thus,  we may proceed as above; cf. Case~1.
\end{proof}

\subsection{Obtaining $a$}

Below we mimic the argument in~\cite{Pervova4} by adapting the definition of the theta map there to the setting of growing GGS-groups.

For a growing GGS-group with defining vector $\mathbf{e}\in\mathcal{F}$  there exists an $n\in \{1,\ldots, p^{m_1}-1\}$ with $e_n\not\equiv 0 \pmod p$. 
Fix such an $n$ and let $d\in\mathbb{Z}^*$ be such that 
$de_n\equiv n \pmod {p^{m_2}}$. We note that the choice of~$d$ is not unique.

In the next subsection, we will be considering a prodense subgroup $M\le G$ that contains~$b^d$ 
and
an ``approximation" $a^nz \in a^n G'$ of $a^n$.
To obtain $a$ we aim to project, along the rightmost infinite path of the tree~$T$, appropriate elements from the first level
stabiliser $\st_M(1)$ 
to a subgroup of
$\Aut T_{v}$, for $v=p^{m_1}p^{m_2}\cdots p^{m_k}$ for some $k\in\mathbb{N}$.  
Writing $\psi_1(z) =(z_1,\ldots,z_{p^{m_1}})$ for an arbitrary element $z\in G'$ and
conjugating $b^d$ by $(a^nz)^{-1}$, by our choice of $d$ we obtain
\[
  \varphi_{p^{m_1}}\big((b^d)^{(a^nz)^{-1}}\big)=z_na_1^{\,n}z_n^{-1},
\]
equivalently,
\[
\varphi_{p^{m_1}}\big((b^d)^{(a^nz)^{-1}}\big) = a_1^{\,n}[a_1^{\,n},z_{n}^{-1}].
\]
We define
\[
\Theta\colon G' \rightarrow G_1', \quad \Theta(z)=[a_1^{\,n},z_{n}^{-1}].
\]
Since $\overline{m}$ is arbitrary, by abuse of notation we will still write $\Theta$ for the corresponding maps from $G_i'$ to $G_{i+1}'$ for each $i$. The integer~$d$ linked with the definition of $\Theta$ will also correspondingly change.

Let $S=\{a_1,b_1,a_1^{-1},b_1^{-1}\}$. As was done by Pervova~\cite{Pervova4}, for later use we define the following three types for an element~$z\in G'$. We say that $z\in G'$ is of 
\begin{enumerate}
    \item[$\bullet$] \emph{type one} if $|z_n|\ne \frac{|z|}{2}$,\;
    \item[$\bullet$] \emph{type two} if $|z_n|= \frac{|z|}{2}$ and $z_n$, written as a word in~$S$ of minimal length, begins with a non-trivial power of~$b_1$;
    \item[$\bullet$] \emph{type three} if $|z_n|= \frac{|z|}{2}$ and $z_n$, written as a word in~$S$ of minimal length, ends with a non-trivial power of~$b_1$. 
\end{enumerate}
Note that the above three types are the only possibilities for $z\in G'$.

\begin{lemma}\label{lem:2.3}
Let $G$ be a growing GGS-group with defining vector $\mathbf{e}\in \mathcal{F}$ and $n$ as defined above. Suppose that $z\in G'$ is of type three and let 
\[
z_n=a_1^{\,\alpha_1}b_1^{\, \beta_1}a_1^{\,\alpha_2}\cdots a_1^{\,\alpha_\ell}b_1^{\,\beta_\ell}
\]
where $\ell=\frac{|z|}{2}$, $\beta_1,\ldots, \beta_\ell\in \mathbb{Z}^*$, $\alpha_1\in \mathbb{Z}/p^{m_2}\mathbb{Z}$ and $\alpha_2,\ldots,\alpha_\ell\in (\mathbb{Z}/p^{m_2}\mathbb{Z})^*$. Then $\Theta(z)$ is of type three if and only if $\alpha_i=(-1)^in$ for  all $i\in \{1,2,\ldots,\ell\}$.
\end{lemma}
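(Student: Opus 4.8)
The plan is to compute $\Theta(z) = [a_1^{\,n}, z_n^{-1}]$ explicitly as a word in $S$, track its length and its leading/trailing syllables, and compare with the definition of type three. Since $z_n$ starts with $a_1^{\,\alpha_1}$ and ends with a non-trivial power $b_1^{\,\beta_\ell}$ (because $z$ is type three, so $|z_n| = |z|/2 = \ell$ and the minimal word for $z_n$ ends in a power of $b_1$), we have $z_n^{-1} = b_1^{\,-\beta_\ell}a_1^{\,-\alpha_\ell}\cdots a_1^{\,-\alpha_1}$. First I would write out
\[
\Theta(z) = a_1^{\,-n} z_n a_1^{\,n} z_n^{-1} = a_1^{\,-n}\bigl(a_1^{\,\alpha_1}b_1^{\,\beta_1}\cdots a_1^{\,\alpha_\ell}b_1^{\,\beta_\ell}\bigr)a_1^{\,n}\bigl(b_1^{\,-\beta_\ell}a_1^{\,-\alpha_\ell}\cdots b_1^{\,-\beta_1}a_1^{\,-\alpha_1}\bigr)
\]
and simplify the ends: the leading $a_1^{\,-n}a_1^{\,\alpha_1} = a_1^{\,\alpha_1 - n}$ and the trailing $a_1^{\,-\alpha_1}$ stay put, while in the middle $b_1^{\,\beta_\ell}a_1^{\,n}b_1^{\,-\beta_\ell}$ forms a non-trivial $b_1$-syllable sandwiched by $a_1$-powers. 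The key bookkeeping is: this product has $b_1$-length exactly $2\ell = |z|$ provided no cancellation occurs between consecutive syllables, which happens precisely when none of the $a_1$-exponents $\alpha_i - \alpha_{i+1}$-type terms that arise from juxtaposing $z_n$ and $z_n^{-1}$ around the central $a_1^{\,n}$ vanish. Tracking where collapse can occur, one sees that the word $\Theta(z)$ (as written) is reduced unless $\alpha_1 \equiv n \pmod{p^{m_2}}$, in which case the leading $a_1$-syllable disappears, or unless some internal pair $a_1^{\,\alpha_{i+1}}a_1^{\,-\alpha_{i+1}} $ meeting across the central insertion collapses — and a careful reading shows the successive collapses happen exactly when $\alpha_1 = n$, then $\alpha_2 = -n$, then $\alpha_3 = n$, and so on, i.e. $\alpha_i = (-1)^{i-1}n$... but one must be careful with the sign convention; since the claim states $\alpha_i = (-1)^i n$, I expect the indexing in $z_n = a_1^{\,\alpha_1}b_1^{\,\beta_1}\cdots$ to force the alternation to start at $\alpha_1 = -n$, which is what makes $\Theta(z)$ again end in a power of $b_1$ after all the telescoping.

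More precisely, I would argue the two directions separately. For the ``if'' direction, assume $\alpha_i = (-1)^i n$ for all $i$; then substitute and watch the word telescope: $a_1^{\,-n}a_1^{\,\alpha_1} = a_1^{\,-2n}$ (if $\alpha_1 = -n$) — hmm, this suggests I need to recheck: actually the cleanest route is to conjugate $z_n$ by $a_1^{\,n}$, i.e. consider $z_n^{a_1^{\,n}} = a_1^{\,-n}z_n a_1^{\,n}$, which is $a_1^{\,\alpha_1 - n}b_1^{\,\beta_1}a_1^{\,\alpha_2}\cdots a_1^{\,\alpha_\ell + n}b_1^{\,\beta_\ell} a_1^{\,-n}$, wait that's not right either since conjugation only affects the ends. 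Let me restate: $a_1^{\,-n} z_n a_1^{\,n} = a_1^{\,\alpha_1 - n} b_1^{\,\beta_1} a_1^{\,\alpha_2}\cdots a_1^{\,\alpha_\ell} b_1^{\,\beta_\ell} a_1^{\,n}$, so $\Theta(z) = (a_1^{\,-n}z_n a_1^{\,n})\cdot z_n^{-1}$ and the cancellation is between the tail $\cdots b_1^{\,\beta_\ell} a_1^{\,n}$ of the first factor and the head $b_1^{\,-\beta_\ell}a_1^{\,-\alpha_\ell}\cdots$ of $z_n^{-1}$. The $a_1^{\,n} b_1^{\,-\beta_\ell}$ does not cancel (different generators), so actually no telescoping happens at the join at all unless $n \equiv 0$, which it is not. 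So the reduced form of $\Theta(z)$ as written always has $b_1$-length $2\ell$, begins with $a_1^{\,\alpha_1 - n}$ (trivial iff $\alpha_1 = n$) and ends with $a_1^{\,-\alpha_1}$. Hence $\Theta(z)$ is never type three from this naive form — so the real content must be that when $\alpha_i = (-1)^i n$, there is a \emph{different}, shorter representative of $\Theta(z)$ with smaller $b_1$-length, obtained via the relation $\psi_1(b_0) = (a_1^{\,e_1},\dots,b_1)$ pulled back, or more likely via identities among the $b_1^{\,\beta_j}$'s. I would therefore look for an element $w \in \st_G(1)$ with $\varphi_{p^{m_1}}(w)$ of much shorter length whose $\Theta$-image agrees, using Lemma~\ref{shortening} and the structure of $G'$.

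The main obstacle I anticipate is exactly this length computation: showing that the minimal-length word for $\Theta(z)$ in $S$ has length $|z|$ (so $\Theta(z)$ is again of type one/two/three rather than shorter) and identifying its first and last syllables, because the syllable structure of $z_n$ can interact with the defining relations (the order of $a_1$ is $p^{m_2}$, and $b_1$ has infinite order but $\st_G(1)$ projects with relations coming from $K^{(1)}$). I would handle this by invoking Lemma~\ref{ceil}/Lemma~\ref{shortening} to control lengths and by arguing that the freeness of the relevant part of $H_1$ (Proposition~\ref{abelianization} and the normal form in $L(H_i)$) prevents unexpected cancellation: the only cancellations available are the ``visible'' ones in the written word $a_1^{\,-n}z_n a_1^{\,n} z_n^{-1}$. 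Granting that, the computation reduces to: $\Theta(z)$ ends in a non-trivial power of $b_1$ (equivalently is type three, once we know $|\Theta(z)| = |z|$) iff the trailing $a_1$-exponent $-\alpha_1$ can be cancelled, which telescopes back through the word forcing $\alpha_1 = -n$, then $\alpha_2 = n$ — no wait, one step at a time — forcing the full alternation $\alpha_i = (-1)^i n$. I will present the telescoping carefully by induction on $\ell$, peeling off matched $b_1^{\,\beta_1}a_1^{\,\alpha_2}\cdots a_1^{\,\alpha_\ell}b_1^{\,\beta_\ell}\cdot b_1^{\,-\beta_\ell}a_1^{\,-\alpha_\ell}\cdots$ pairs, where each successful peel requires one equation $\alpha_{i+1} = (-1)^{i+1}n$ and the base case pins down $\alpha_1 = -n$; the reverse implication is then immediate since each step is reversible.
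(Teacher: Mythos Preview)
Your proposal has a fundamental misreading of what ``type three'' means for $\Theta(z)$. The types are defined via the \emph{section at position~$n$}: for $w\in G_1'$ we look at $w_n=\varphi_n(w)\in G_2$ and ask whether $|w_n|=|w|/2$ and whether a minimal word for $w_n$ in $\{a_2^{\pm1},b_2^{\pm1}\}$ ends in a power of~$b_2$. You instead analyse the syllable decomposition of $\Theta(z)$ as a word in $a_1,b_1$ and try to decide whether that word ends in a $b_1$-power. This is the wrong object; in particular your observation that ``$\Theta(z)$ always ends with $a_1^{-\alpha_1}$, hence is never type three'' is simply not relevant to the statement.

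The actual argument (as in Pervova~\cite{Pervova4}) proceeds at the next level of the tree. Write
\[
\Theta(z)=a_1^{\,\alpha_1-n}b_1^{\,\beta_1}a_1^{\,\alpha_2}\cdots b_1^{\,\beta_\ell}\,a_1^{\,n}\,b_1^{\,-\beta_\ell}a_1^{\,-\alpha_\ell}\cdots b_1^{\,-\beta_1}a_1^{\,-\alpha_1}
\]
as a product of $2\ell$ conjugates $(b_1^{\,c_j})^{a_1^{\,r_j}}$ and compute $\varphi_n(\Theta(z))$: each factor contributes $b_2^{\,c_j}$ precisely when its $b_2$-coordinate lands at position~$n$, and a power of $a_2$ otherwise. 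For $\Theta(z)$ to be of type three one needs $|\varphi_n(\Theta(z))|=\ell$, i.e.\ exactly half of the $2\ell$ factors must contribute a $b_2$; tracking which $r_j$ hit~$n$ is what forces the alternation $\alpha_i=(-1)^i n$. Nothing in your proposal touches $\varphi_n(\Theta(z))$ or the passage from $b_1$-syllables to $b_2$-syllables, so the core computation is absent. The detours through ``relations coming from $K^{(1)}$'' and looking for ``a shorter representative via $\psi_1(b_0)$'' are red herrings born of the initial misreading.
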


\begin{proof} This follows just as in~\cite[Lem.~2.3]{Pervova4}. 
\end{proof}

The following result is straightforward to verify, and first appeared  in~\cite[Cor.~2.4]{Pervova4} in the setting of GGS-groups acting on the $p$-regular rooted tree.

\begin{corollary}\label{cor:2.4}
Let $G$ be a growing GGS-group with defining vector $\mathbf{e}\in \mathcal{F}$ and $n$ as defined above. Suppose that both $z$ and $\Theta(z)$ are of type three. Writing $\ell=\frac{|z|}{2}$ and
\[
\Theta(z)=(b_1^{\,\alpha_1})^{a_1^{\,t_1}}(b_1^{\,\beta_1})^{a_1^{\,n}}(b_1^{\,\alpha_2})^{a_1^{\,t_2}}\cdots (b_1^{\,\beta_{\ell-1}})^{a_1^{\,n}}(b_1^{\,\alpha_\ell})^{a_1^{\,t_\ell}}(b_1^{\,\beta_\ell})^{a_1^{\,n}},
\]
where $\alpha_i,\beta_i\in \mathbb{Z}^*$ and $t_i\in  (\mathbb{Z}/p^{m_2}\mathbb{Z})^*$, we have
\begin{enumerate}
    \item if $\ell$ is even, then $t_i=\begin{cases}
    2n & \text{for }i\in \{1,2,\ldots,\frac{\ell}{2}\},\\
    0 & \text{for }i\in \{\frac{\ell}{2}+1,\ldots, \ell\},
    \end{cases}$
    \item if $\ell$ is odd, then $t_i=\begin{cases}
    2n & \text{for }i\in \{1,2,\ldots,\frac{\ell+1}{2}\},\\
    0 & \text{for }i\in \{\frac{\ell+3}{2},\ldots, \ell\}.
    \end{cases}$
\end{enumerate}
\end{corollary}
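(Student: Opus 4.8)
The plan is to reduce the statement to a syllable count in the word obtained by expanding the commutator, in direct analogy with Pervova~\cite[Cor.~2.4]{Pervova4}. Since $z$ is of type three, Lemma~\ref{lem:2.3} (together with the hypothesis that $\Theta(z)$ is also of type three) provides
\[
z_n=a_1^{\,\alpha_1}b_1^{\,\beta_1}a_1^{\,\alpha_2}b_1^{\,\beta_2}\cdots a_1^{\,\alpha_\ell}b_1^{\,\beta_\ell},\qquad \ell=\tfrac{|z|}{2},\quad \alpha_i=(-1)^in,\quad \beta_i\in\mathbb{Z}^*.
\]
Substituting this, together with $z_n^{-1}=b_1^{\,-\beta_\ell}a_1^{\,-\alpha_\ell}\cdots b_1^{\,-\beta_1}a_1^{\,-\alpha_1}$, into $\Theta(z)=[a_1^{\,n},z_n^{-1}]=a_1^{\,-n}z_na_1^{\,n}z_n^{-1}$ presents $\Theta(z)$ as an alternating word in $a_1$ and $b_1$ whose $b_1$-syllables, read from left to right, are $b_1^{\,\beta_1},\dots,b_1^{\,\beta_\ell},b_1^{\,-\beta_\ell},\dots,b_1^{\,-\beta_1}$. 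These $2\ell$ syllables do not collapse, because consecutive ones are separated by a power of $a_1$ whose exponent is $\pm n$, hence nonzero modulo $p^{m_2}$ (recall $p$ is odd and $1\le n<p^{m_1}<p^{m_2}$).

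Next I would use that $\Theta(z)\in G_1'\le\st_{G_1}(1)=\langle b_1\rangle^{G_1}$, so that $\Theta(z)$ has total $a_1$-exponent $0$ and can therefore be written as a product of conjugates of the $b_1$-syllables just listed; the conjugating exponent of the $j$-th syllable equals the negative of the cumulative $a_1$-exponent occurring to its left in the word. For a syllable among the first $\ell$ this cumulative exponent is $-n+(\alpha_1+\cdots+\alpha_j)$, while for one among the last $\ell$ it is $\alpha_1+\cdots+\alpha_k$, with $k$ the index of the corresponding $\beta$; in each case substituting $\alpha_i=(-1)^in$ makes the partial sum telescope to $0$ or $-n$, so that all conjugating exponents lie in $\{0,n,2n\}$. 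Reading the word against the displayed form of $\Theta(z)$, the even-indexed (the ``$\beta$'') slots are automatically forced to carry $a_1^{\,n}$, which confirms that form, while the odd-indexed (the ``$\alpha$'') slots carry the exponents $t_i$: those lying before the central factor $a_1^{\,n}$ (the one separating $z_n$ from $z_n^{-1}$) come out equal to $2n$, and those lying after it come out equal to $0$.

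Finally I would observe that there are $\lceil\ell/2\rceil$ odd-indexed slots before the turning point and $\lfloor\ell/2\rfloor$ after it, which is precisely the split recorded in cases~(i) and~(ii); a short check of indices separates the even and odd $\ell$ cases. I expect the only genuine obstacle to be the bookkeeping of the cumulative $a_1$-exponents across the central factor $a_1^{\,n}$, where the two parity cases diverge; apart from that the argument is identical to Pervova's~\cite[Cor.~2.4]{Pervova4}, the sole new feature being that $a_1$ has order $p^{m_2}$ rather than $p$, which enters only through the non-collapsing of the syllables noted above.
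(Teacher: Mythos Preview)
Your proposal is correct and is exactly the direct verification the paper has in mind when it says the result ``is straightforward to verify'' and ``first appeared in~\cite[Cor.~2.4]{Pervova4}''; the paper gives no further argument. Your bookkeeping of the cumulative $a_1$-exponents on either side of the central $a_1^{\,n}$ is accurate, and your remark that the only change from Pervova's setting is the order of $a_1$ (affecting only the non-collapsing check) is precisely the point.
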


The following result is proved as in~\cite[Thm.~2.1]{Pervova4}, via a careful case-by-case analysis. However, owing to the fact that $\varphi_i(b_1)=1$ for $i\in \{p^{m_1},p^{m_1}+1,\ldots,p^{m_2}-1\}$, many complicated cases in Pervova's proof disappear.

\begin{theorem}\label{thm:obtaining-a}
Let $G$ be a growing GGS-group with defining vector $\mathbf{e}\in \mathcal{F}$ and $n$ as defined above. If $K=\langle b^{\delta}, a^nz\rangle$ for  some $\delta\not\equiv 0 \pmod p$ and $z\in G'$, then there exists some $k\in\mathbb{N}$ such that, writing $v=p^{m_1}p^{m_2}\cdots p^{m_k}$, we have $\varphi_v(\textup{st}_K(v))\ge\langle a_k, b_k^{\,\delta}\rangle^{b_k^{\,\lambda}}$ for some 
$\lambda\in \mathbb{Z}$.
\end{theorem}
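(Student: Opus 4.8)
The plan is to iterate the map $\Theta$ together with a suitable "raising to a $p$-power" trick, exactly as in Pervova's argument in~\cite{Pervova4}, but streamlined by the observation that $\varphi_i(b_1)=1$ for $i\in\{p^{m_1},\ldots,p^{m_2}-1\}$. Write $K=\langle b^\delta, a^nz\rangle$ and recall from the computation preceding Lemma~\ref{lem:2.3} that $\varphi_{p^{m_1}}\big((b^\delta)^{(a^nz)^{-\delta'}}\big)$, for the appropriate $\delta'$ with $\delta'\delta e_n\equiv n\pmod{p^{m_2}}$, equals $a_1^{\,n}\Theta(z)$ with $\Theta(z)=[a_1^{\,n},z_n^{-1}]\in G_1'$; here I have folded $\delta$ into the choice of the integer~$d$ attached to $\Theta$. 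So inside $\varphi_{p^{m_1}}(\mathrm{st}_K(p^{m_1}))$ we find both $b_1^{\,\delta}$ (a section of $b^\delta$ down the rightmost path) and an approximation $a_1^{\,n}z^{(1)}$ of $a_1^{\,n}$ with $z^{(1)}=\Theta(z)\in G_1'$. This gives a subgroup of the same shape as $K$ one level down, with the defining vector $\mathbf{e}$ unchanged and $z$ replaced by $\Theta(z)$.

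First I would show that, if this descent never terminates in the favourable way, then the lengths $|z|, |\Theta(z)|, |\Theta^2(z)|,\ldots$ are forced to be non-increasing, and that the only way the length fails to strictly drop is when each successive iterate is of \emph{type three} and moreover its unique "$b_1$-supporting" section $z_n$ has the rigid form $z_n=a_1^{\,\alpha_1}b_1^{\,\beta_1}\cdots a_1^{\,\alpha_\ell}b_1^{\,\beta_\ell}$ with $\alpha_i=(-1)^i n$, as dictated by Lemma~\ref{lem:2.3} and Corollary~\ref{cor:2.4}. (If at some stage an iterate is of type one or type two, then the length drops, i.e. $|\Theta(z)|<|z|$, and we restart the bookkeeping; since lengths are non-negative integers this can happen only finitely often.) Thus, after finitely many steps we reach a $z$ such that $z$ and all further iterates $\Theta^j(z)$ are of type three with the rigid form above, with a common length $\ell=|z|/2$.

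Next, using Corollary~\ref{cor:2.4} I would analyze the positions $t_i$ of the $a_1$-conjugating exponents appearing in $\Theta(z)$ versus those in $z$. The point is that passing from $z$ to $\Theta(z)$ shifts which syllables are "anchored at $a_1^{\,n}$": in $z$ (type three) the $b_1$-syllables are separated by the exponents $(-1)^i n$, while in $\Theta(z)$ half the separators become $2n$ and half become $0$. Iterating, the block of "$0$-separators" (i.e. merging syllables, which genuinely shortens the word) must eventually reach the start, forcing a strict length decrease — unless $\ell\le 1$. Hence after finitely many further applications of $\Theta$ we arrive at $\ell\in\{0,1\}$, i.e. $z$ has length $\le 2$. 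In that base case one checks by hand (as in~\cite[Thm.~2.1]{Pervova4}, but with far fewer cases because $\varphi_i(b_1)=1$ for $p^{m_1}\le i<p^{m_2}$) that a bounded number of additional projections down the rightmost path, together with taking a $p$-power of $a_1^{\,n}z^{(j)}$ to kill the $G'$-part once $z^{(j)}$ is short enough, produces an element of the form $a_k\cdot(\text{something in }\mathrm{st})$ and simultaneously retains $b_k^{\,\delta}$; conjugating by a suitable power $b_k^{\,\lambda}$ accounts for the fact that the "$a$-approximation" sits in a conjugate of $\langle a_k,b_k^{\,\delta}\rangle$. Collecting, with $v=p^{m_1}p^{m_2}\cdots p^{m_k}$ the rightmost vertex at the $k$th level, we conclude $\varphi_v(\mathrm{st}_K(v))\ge \langle a_k,b_k^{\,\delta}\rangle^{b_k^{\,\lambda}}$.

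The main obstacle I expect is the careful combinatorial control of the descent: tracking how the type (one/two/three), the length $\ell=|z|/2$, and the precise syllable pattern $\alpha_i=(-1)^i n$, $t_i\in\{0,2n\}$ evolve under $\Theta$, and checking that in every branch either the length strictly decreases or the "$0$-separator" front advances, so that the process must terminate. This is exactly the bookkeeping of Pervova's case analysis in~\cite[Thm.~2.1]{Pervova4}; the simplification here is genuine but the argument still requires patiently verifying that none of the finitely many shapes of a length-$\le 2$ element can persist indefinitely. Once termination is secured, extracting $a_k$ via a $p$-power (legitimate because at that point the $G'$-error term has bounded length and $a_k$ has $p$-power order) and restoring $b_k^{\,\delta}$ is routine.
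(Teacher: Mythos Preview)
Your plan has the right shape and the right ingredients, but there is a genuine gap in your handling of type one. You assert that ``if at some stage an iterate is of type one or type two, then the length drops, i.e.\ $|\Theta(z)|<|z|$''. This is false for type one in the subcase $|z_n|>|z|/2$: by Lemma~\ref{shortening} this forces $|z|$ odd and $|z_n|=(|z|+1)/2$, whence $|\Theta(z)|\le 2|z_n|=|z|+1$, so the length can \emph{increase} under $\Theta$. Your descent therefore does not terminate as written. (Your claim for type two is also slightly off---one may have $|\Theta(z)|=|z|$---but that is harmless since a second application of $\Theta$ does give a strict drop.)

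The paper repairs exactly this subcase by deploying the $p$-power trick \emph{during the induction}, not only in the base case: writing $n=p^s t$ with $p\nmid t$, one uses $(a^nz)^{p^{m_1-s}}\in\st_G(1)$ and computes
\[
\varphi_{p^{m_1}}\big((b^d)^{(a^nz)^{p^{m_1-s}-1}}\big)=z_{p^{m_1}}^{-1}z_{p^{m_1}-n}^{-1}\cdots z_{2n}^{-1}\,a_1^{\,n}\,z_{2n}\cdots z_{p^{m_1}-n}z_{p^{m_1}},
\]
which produces a new approximation $a_1^{\,n}z'$ with $|z'|\le 2(|z|-|z_n|)=|z|-1<|z|$. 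So the conjugating power of $a^nz$ is what rescues the induction here, not a power taken at the end to ``kill the $G'$-part''. Your final extraction of $a_k$ is also not quite as you describe: once one has $\langle a_k^{\,n},b_k^{\,\delta}\rangle$ (up to conjugation by $b_k^{\,\lambda}$), one simply projects once more along the rightmost edge, using $\varphi_{p^{m_{k+1}}}((b_k^{\,\delta})^{a_k^{-n}})=a_{k+1}^{\,\mu}$ with $\mu\not\equiv 0\pmod p$, rather than taking a $p$-power. With these two adjustments your outline matches the paper's argument.
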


\begin{proof}
It suffices to show that there is some $k\in\mathbb{N}$ such that $\varphi_v(\text{st}_K(v))\ge\langle a_k^{\,n}, b_k^{\,\delta}\rangle^{b_k^{\,\lambda}}$ for some $\lambda\in \mathbb{Z}$ and where $v=p^{m_1}p^{m_2}\cdots p^{m_k}$, as then upon considering
\[
\varphi_{p^{m_{k+1}}}\big((b_k^{\,\delta})^{a_k^{\,-n}}\big)=a_{k+1}^{\,\mu}
\quad\text{and}\quad \varphi_{p^{m_{k+1}}}(b_k^{\,\delta})=b_{k+1}^{\,\delta}, 
\]
where $\mu\not\equiv 0 \pmod p$, the result follows.

Let $\varepsilon\in\mathbb{Z}^*$ be such that $\varphi_n(b^{\delta\varepsilon})=a_1^{\,n}$, and we let $d=\delta\varepsilon$ for the definition of~$\Theta$. Note that 
\begin{equation}\label{eq:theta-reduction}
|\Theta(z)|\le 2|z_n|.
\end{equation}
We will utilise this to proceed by induction on the length~$|z|$.

For the base step, we suppose that $|z|\le 2$. As $z\in G'$, either $|z|=0$ or $|z|=2$. If $|z|=0$, then $z=1$ and we are done. Hence suppose that $|z|=2$. Then $z=(b^r)^{a^i}(b^{-r})^{a^j}$ for some $r\in \mathbb{Z}^*$ and $i,j\in\mathbb{Z}/p^{m_1}\mathbb{Z}$ with $i\ne j$. There are three possibilities to consider.

\underline{Case 1:} Suppose $i,j\ne n$. Then $z_n=a_1^{\,s}$ for some $s\in\mathbb{Z}/p^{m_2}\mathbb{Z}$. Hence $\Theta(z)=1$. Since there exists an element of $K$ whose image under $\varphi_{p^{m_1}}$ is $a_1^{\,n}\Theta(z)$ (by the definition of the $\Theta$ map above), we conclude that  $\varphi_{p^{m_1}}(\st_K(1))=\langle a_1^{\,n}, b_1^{\,\delta}\rangle$.

\underline{Case 2:} Suppose $i=n$. Then $j\ne n$ and $z_n=b_1^{\,r} a_1^{\,-r e_{n-j}}$. Recalling the choice of $d$ above, we have 
\[
\varphi_{p^{m_1}}\big((b^d)^{(a^nz)^{-1}}\big)=b_1^{\, r}a_1^{\, n}b_1^{\, -r},
\]
and hence $\varphi_{p^{m_1}}(\st_K(1))\ge \langle b_1^{\,\delta},b_1^{\, r}a_1^{\, n}b_1^{\, -r}\rangle=\langle a_1^{\, n}, b_1^{\,\delta}\rangle^{b_1^{\, -r}}$.

\underline{Case 3:} Suppose $j=n$. Then $i\ne n$ and $z_n=a_1^{\, re_{n-i}} b_1^{\,-r}$, giving
\[
\Theta(z)=(b_1^{\, -r})^{a_1^{\, n-re_{n-i}}}(b_1^{\,r})^{a_1^{\,-re_{n-i}}}.
\]
If neither $n-re_{n-i}$ nor $-re_{n-i}$ is congruent  to~$n$ modulo~$p^{m_2}$, then we proceed as in Case~1. If $n-re_{n-i}\equiv n \pmod {p^{m_2}}$, then we proceed as in Case~2. Hence we assume that $-re_{n-i}\equiv n \pmod {p^{m_2}}$.

Here we have $\Theta(z)=(b_1^{\, -r})^{a_1^{\,2n}} (b_1^{\, r})^{a_1^{\, n}} $. Recalling how $b_1$ is defined, in particular $e_\ell=0$ for all $p^{m_1}\le \ell\le p^{m_2}-1$, we see that $\varphi_n(\Theta(z))= a_2^{\, -re_{p^{m_2}-n}} b_2^{\,r}=b_2^{\, r}$. Therefore,
\[
\Theta(\Theta(z))=(b_2^{\, r})^{a_2^{\, n}}b_2^{\, -r}.
\]
Writing $u=p^{m_1}p^{m_2}$, as $\varphi_u(\text{st}_K(u))\ge \langle a_2^{\,n}\Theta(\Theta(z)),b_2^{\,\delta}\rangle$ we proceed as in Case~2.

\medskip

Next, for the induction step, we also have three cases to consider, depending on the type of~$z$. 

First suppose that $z$ is of type one. Then either $|z_n|<\frac{|z|}{2}$ or $|z_n|>\frac{|z|}{2}$.
If $|z_n|<\frac{|z|}{2}$, it follows from Equation~\eqref{eq:theta-reduction} that
\[
|\Theta(z)|<|z|,
\]
and we can proceed by induction. Hence we suppose that  $|z_n|>\frac{|z|}{2}$. From Lemma~\ref{shortening}, it follows that $|z|$ is odd and that $|z_n|=\frac{|z|+1}{2}$. Writing $n=p^s t$, for $s\in\{0,1,\ldots,m_1-1\}$ and $t\not\equiv 0 \pmod p$, observe that 
\begin{align*}
\varphi_{p^{m_1}} \big((a^nz)^{p^{m_1-s}}\big)&=\varphi_{p^{m_1}} \big(z^{a^{tp^{m_1}-n}}z^{a^{tp^{m_1}-2n}}\cdots z^{a^{2n}}z^{a^{n}}z\big)\\
&=\varphi_{p^{m_1}} \big(z^{a^{n(p^{m_1-s}-1)}}z^{a^{n(p^{m_1-s}-2)}}\cdots z^{a^{2n}}z^{a^{n}}z\big)\\
&= z_{n}z_{2n}\cdots z_{p^{m_1}-n} z_{p^{m_1}},
\end{align*}
where the indices~$\ell$ of $z_\ell$ are reduced modulo $p^{m_1}$. 
Therefore
\begin{align*}
\varphi_{p^{m_1}} \big((b^d)^{(a^nz)^{p^{m_1-s}-1}}\big)&=(z_na_1^{\,n}z_n^{-1})^{z_{n}z_{2n}\cdots z_{p^{m_1}-n} z_{p^{m_1}}}\\
&=z_{p^{m_1}}^{-1} z_{p^{m_1}-n}^{-1}\cdots z_{2n}^{-1} a_1^{\, n} z_{2n}\cdots z_{p^{m_1}-n} z_{p^{m_1}}.
\end{align*}
As 
\[
|z_{2n}\cdots z_{p^{m_1}-n} z_{p^{m_1}}|\le \sum_{
i\ne n} |z_i| \le |z|-|z_n|,
\]
we obtain
\[
\big\vert a_1^{\,-n}\varphi_{p^{m_1}} \big((b^d)^{(a^nz)^{p^{m_1-s}-1}}\big)\big\vert\le 2|z|-2|z_n|=2|z|-|z|-1<|z|
\]
with of course $a_1^{\,-n}\varphi_{p^{m_1}} \big((b^d)^{(a^nz)^{p^{m_1-s}-1}}\big)\in G_1'$.
Therefore, since
\[
\varphi_{p^{m_1}}(\st_K(1))\ge \Big\langle b_1^{\,\delta}, a_1^{\,n} \big(a_1^{\,-n}\varphi_{p^{m_1}} \big((b^d)^{(a^nz)^{p^{m_1-s}-1}}\big)\big)\Big\rangle,
\]
the result follows by induction.

Now we suppose that $z$ is of type two. Then $z_n$ has the form $z_n=b_1^{\,\beta_1} a_1^{\, \alpha_1}\cdots b_1^{\, \beta_\ell}a_1^{\, \alpha_\ell}$ for some $\beta_1,\ldots, \beta_{\ell}\in \mathbb{Z}^*$, $\alpha_1,\ldots, \alpha_{\ell-1}\in (\mathbb{Z}/p^{m_2}\mathbb{Z})^*$  and $\alpha_\ell\in \mathbb{Z}/p^{m_2}\mathbb{Z}$ where $\ell=\frac{|z|}{2}$. Thus
\[
\Theta(z)=a_1^{\,-n}b_1^{\,\beta_1}a_1^{\,\alpha_1}b_1^{\,\beta_2}\cdots a_1^{\,\alpha_{\ell-1}}b_1^{\, \beta_\ell} a_1^{\, n} b_1^{\,-\beta_\ell}a_1^{\, -\alpha_{\ell-1}}\cdots b_1^{\,- \beta_2}a_1^{\, -\alpha_1}b_1^{\,-\beta_1}
\]
and $|\Theta(z)|\leq 2|z_n|=|z|$.
If $|\Theta(z)|<|z|$, we proceed as before by induction. So suppose $|\Theta(z)|=|z|$. We deduce that $\alpha_i=(-1)^in$ for all $i\in\{1,\ldots,\ell\}$ (else  $|\varphi_n(\Theta(z))|<|z_n|$), which gives
\[
\Theta(z)=\begin{cases}
(b_1^{\,\beta_1})^{a_1^{\,n}}(b_1^{\,\beta_2})^{a_1^{\,2n}}\cdots (b_1^{\,\beta_{\ell-1}})^{a_1^{\,n}}(b_1^{\,\beta_\ell})^{a_1^{\,2n}} (b_1^{\,-\beta_\ell})^{a_1^{\,n}}b_1^{\,-\beta_{\ell-1}}\cdots (b_1^{\, -\beta_2})^{a_1^{\, n}}b_1^{\,-\beta_1} & \text{ if }\ell\text{ is even,}\\
(b_1^{\,\beta_1})^{a_1^{\,n}}(b_1^{\,\beta_2})^{a_1^{\,2n}}\cdots (b_1^{\,\beta_{\ell}})^{a_1^{\,n}} b_1^{\,-\beta_\ell}(b_1^{\,-\beta_{\ell-1}})^{a_1^{\,n}}\cdots (b_1^{\,- \beta_2})^{a_1^{\, n}}b_1^{\,-\beta_1} & \text{ if }\ell\text{ is odd.}
\end{cases}
\]
From the definition of~$b_1$, it follows that \[
\varphi_n(\Theta(z))=\begin{cases}
b_2^{\, \beta_1+\beta_3+\cdots + \beta_{\ell-1}-\beta_\ell}a_2^{\,-e_n\beta_{\ell-1}}b_2^{\,-\beta_{\ell-2}}\cdots  a_2^{\,-e_n\beta_3}b_2^{\, -\beta_2} a_2^{\,-e_n\beta_1} & \text{ if }\ell\text{ is even,}\\
b_2^{\, \beta_1+\beta_3+\cdots + \beta_{\ell}}a_2^{\,-e_n\beta_{\ell}}b_2^{\,-\beta_{\ell-1}}\cdots  a_2^{\,-e_n\beta_3}b_2^{\, -\beta_2} a_2^{\,-e_n\beta_1} & \text{ if }\ell\text{ is odd,}
\end{cases}
\]
which is of length at most~$\frac{\ell+1}{2}$. 
Hence $|\Theta(\Theta(z))|<|\Theta(z)|$. 
As before, we then proceed by induction.

Finally, we suppose that $z$ is of type three. So $z_n= a_1^{\, \alpha_1}b_1^{\, \beta_1}\cdots a_1^{\, \alpha_\ell}b_1^{\, \beta_\ell}$ for some $\beta_1,\ldots, \beta_\ell\in \mathbb{Z}^*$, $\alpha_2,\ldots, \alpha_\ell\in (\mathbb{Z}/p^{m_2}\mathbb{Z})^*$ and $\alpha_1\in \mathbb{Z}/p^{m_2}\mathbb{Z}$.  

First we consider the case $\ell\ge 3$.
We may suppose that $\Theta(z)$ is of type three, else we are done by the above.   Let us suppose that $\ell$ is even; the case $\ell$ is odd follows similarly. Corollary~\ref{cor:2.4} implies that 
\[
\Theta(z)=(b_1^{\, \beta_1})^{a_1^{\, 2n}}(b_1^{\, \beta_2})^{a_1^{\, n}} (b_1^{\, \beta_3})^{a_1^{\, 2n}} \cdots (b_1^{\, -\beta_3})^{a_1^{\, n}}  b_1^{\, -\beta_2} (b_1^{\, -\beta_1})^{a_1^{\, n}}.
\]
As seen above, we obtain $|\Theta(\Theta(z))|<|\Theta(z)|$ and the result follows by induction.

It remains to settle the case $\ell=2$. Here we have $z_n=a_1^{\, \alpha_1} b_1^{\, \beta_1} a_1^{\, \alpha_2}b_1^{\, \beta_2}$. As before, if $\Theta(z)$ is of type one or two, we are done as above. Thus we assume that $\Theta(z)$ is of type three, and 
Corollary~\ref{cor:2.4} yields that 
\[
\Theta(z)=(b_1^{\, \beta_1})^{a_1^{\, 2n}}(b_1^{\, \beta_2})^{a_1^{\, n}} b_1^{\, -\beta_2} (b_1^{\, -\beta_1})^{a_1^{\, n}}.
\]
Recalling that $\varphi_i(b_1)=1$ for $i\in \{p^{m_1},p^{m_1}+1,\ldots,p^{m_2}-1\}$, it follows that $\varphi_n(\Theta(z))= b_2^{\, \beta_2}a_2^{\,-e_n\beta_2}b_2^{\, \beta_1}$.
Hence $\Theta(\Theta(z))=a_2^{\,-n} b_2^{\, \beta_2}a_2^{\,-e_n\beta_2 }b_2^{\, \beta_1} a_2^{\, n}
b_2^{\, -\beta_1}a_2^{\,e_n\beta_2}b_2^{\, -\beta_2}$, and 
therefore $\Theta(\Theta(z))$ is of type one or two. In either case, we are done as above.
\end{proof}


\subsection{Maximal subgroups of finite index}
Equipped with the previous two subsections, we can now prove Theorem~\ref{start}.

\begin{proposition}\label{proposition: second}
  Let $G$ be a  branch growing GGS-group with defining vector $\mathbf{e}\in\mathcal{F}$.
  Let $M$ be a prodense subgroup of~$G$, and suppose that $b^{\delta}\in M$ for some $\delta\not\equiv 0 \pmod p$. Then there
  exists a vertex~$u$ of~$T$, of some level $i\in\mathbb{N}\cup\{0\}$, such that $M_u=G_i$.
\end{proposition}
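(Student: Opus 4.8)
The plan is to feed the hypothesis $b^{\delta}\in M$ into Theorem~\ref{thm:obtaining-a} and then to exploit the prodensity of $M$ together with the branch structure of $G$. The reason some genuine work is needed here — and the way the growing GGS-groups differ from the torsion GGS-groups on the $p^{n}$-regular tree — is that $b$ has infinite order, so possessing $b^{\delta}$ with $\delta\not\equiv0\pmod p$ is strictly weaker than possessing $b$.

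First I would produce an ``approximation of $a^{n}$'' inside $M$: since $M$ is prodense and $G'$ is a non-trivial normal subgroup, Proposition~\ref{abelianization} shows that $M$ surjects onto $G/G'\cong\nicefrac{\mathbb{Z}}{p^{m_1}\mathbb{Z}}\times\mathbb{Z}$, so, fixing $n\in\{1,\dots,p^{m_1}-1\}$ with $e_n\not\equiv0\pmod p$ (possible as $\mathbf{e}\in\mathcal{F}$), we may choose $a^{n}z\in M$ with $z\in G'$. Setting $K=\langle b^{\delta},a^{n}z\rangle\le M$, Theorem~\ref{thm:obtaining-a} provides some $k\in\mathbb{N}$ with
\[
M_v=\varphi_v(\text{st}_M(v))\supseteq\varphi_v(\text{st}_K(v))\supseteq\langle a_k,b_k^{\,\delta}\rangle^{b_k^{\,\lambda}}
\]
for $v=p^{m_1}p^{m_2}\cdots p^{m_k}$ and some $\lambda\in\mathbb{Z}$. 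Since $G$ is $\sigma$-fractal (Lemma~\ref{lem:fractal}), $G_v=G_k$; by Lemma~\ref{lem:prodense}, $M_v$ is prodense in $G_k$; and applying the definition of prodensity to the non-trivial normal subgroups $\st_{G_k}(n)$ and to arbitrary proper finite-index normal subgroups, $M_v$ acts spherically transitively on $T_v$ and surjects onto every finite quotient of $G_k$. It then suffices to show $M_v=G_k$, as one may then take $u=v$, $i=k$.

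To prove $M_v=G_k$, after conjugating $M_v$ by $b_k^{-\lambda}$ (preserving prodensity) I may assume $\lambda=0$, so $a_k,b_k^{\,\delta}\in M_v$, and I aim to locate a non-trivial normal subgroup of $G_k$ inside $M_v$: once some $1\ne N\trianglelefteq G_k$ satisfies $N\le M_v$, prodensity forces $G_k=M_vN=M_v$. The seed is that, by the recursive definition of $b_k$, we have $\varphi_\ell(b_k)=1$ for $\ell\in\{p^{m_1},\dots,p^{m_{k+1}}-1\}$; hence for $i\in\{1,\dots,p^{m_1}-1\}$ the commutator $[b_k^{\,\delta},(b_k^{\,\delta})^{a_k^{\,i}}]\in M_v$ lies in the rigid vertex stabiliser $\text{rist}_{G_k}(u_i)$ of the $i$-th first-level vertex $u_i$ of $T_v$, with
\[
\varphi_{u_i}\big([b_k^{\,\delta},(b_k^{\,\delta})^{a_k^{\,i}}]\big)=[a_{k+1}^{\,\delta e_i},b_{k+1}^{\,\delta}].
\]
Choosing $i$ with $e_i\not\equiv0\pmod p$ makes this a non-trivial element of $G_{k+1}'$; conjugating it inside $M_v$ by suitable powers of the $(b_k^{\,\delta})^{a_k^{\,j}}\in M_v$, which act on the $u_i$-section through $\langle a_{k+1},b_{k+1}^{\,\delta}\rangle$, and using the spherical transitivity of $M_v$ to move the support around the first level, I would enlarge the part of $M_v$ supported on a single subtree. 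Iterating this a bounded number of levels and invoking the branch structure of $G$ via Theorem~\ref{lem:branch-gamma-3} — which realises some $\rist_{G_k}(N)$ as $\psi_N^{-1}$ of a product of copies of a fixed non-trivial normal subgroup $L\trianglelefteq G_{k+N}$ with $L\supseteq\langle b_{k+N}^{\,p^{m_{k+N+1}}}\rangle^{G_{k+N}}\gamma_3(G_{k+N})$ — I would conclude that $M_v$ contains $\psi_N^{-1}(L\times\overset{p^{m_1+\cdots+m_{k+N}}}\dots\times L)$, a non-trivial normal subgroup of $G_k$, whence $M_v=G_k$.

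The steps up to the reduction ``find a non-trivial normal subgroup of $G_k$ inside $M_v$'' are routine once Theorem~\ref{thm:obtaining-a} is available. The hard part will be the last step: promoting the single-subtree-supported commutators $[b_k^{\,\delta},(b_k^{\,\delta})^{a_k^{\,i}}]$, together with what prodensity supplies, into a full rigid level stabiliser. This is exactly the place where one must use that $G$ is branch rather than merely weakly branch — one needs not only that the relevant sections generate sufficiently large subgroups of the $G_{k+j}$, but that the corresponding elements can be realised in $M_v$ with trivial sections everywhere else, which demands a careful descent keeping track of the many sections forced to vanish by the ``growing'' phenomenon.
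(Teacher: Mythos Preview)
Your reduction to the situation $\langle a_k, b_k^{\,\delta}\rangle\le M_v$ with $M_v$ prodense in $G_k$ is fine, and so is the observation that it then suffices to exhibit a non-trivial normal subgroup of $G_k$ inside $M_v$.  The difficulty is exactly where you locate it, and your sketch for that step does not go through.  All of the elements you propose to use in building the normal subgroup lie in $H:=\langle a_k, b_k^{\,\delta}\rangle$; but every first-level section of an element of $\st_H(1)$ lies in $\langle a_{k+1}, b_{k+1}^{\,\delta}\rangle$, and iterating, every level-$N$ section lies in $\langle a_{k+N}, b_{k+N}^{\,\delta}\rangle$.  In particular each such section has $\varepsilon_{b_{k+N}}$ divisible by $\delta$.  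Since $\delta$ is coprime to $p$, for $|\delta|>1$ the element $b_{k+N}^{\,p^{m_{k+N+1}}}$ (which has $\varepsilon_{b_{k+N}}=p^{m_{k+N+1}}$) is \emph{not} in this subgroup, so you cannot reach the $L$ coming from Theorem~\ref{lem:branch-gamma-3}.  Replacing $L$ by something smaller does not obviously help: what you can produce in a single subtree is the $\langle a_{k+1},b_{k+1}^{\,\delta}\rangle$-normal closure of $[a_{k+1},b_{k+1}^{\,\delta}]$, i.e.\ $H_{k+1}'$, and asking for a non-trivial $G_{k+1}$-normal subgroup inside $H_{k+1}'$ is the same problem one level down --- an infinite regress.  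Conjugating by arbitrary elements of $M_v$ rather than just by $H$ does not break the circle, since the conjugators you would need (to turn $H_{k+1}'$ into something $G_{k+1}$-normal) range over $M_{vu}$, whose equality with $G_{k+1}$ is precisely what is in question.

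The paper proceeds quite differently and avoids this trap altogether.  After obtaining $\langle a_k, b_k^{\,\delta}\rangle\le M_v$, it does \emph{not} try to manufacture a normal subgroup out of $b_k^{\,\delta}$.  Instead it first records that $\langle a_\ell, b_\ell^{\,\delta}\rangle\le M_w$ for \emph{every} vertex $w$ of every level $\ell$ (this ``$a$ everywhere'' consequence propagates down because $\delta e_i\not\equiv0\pmod p$ for some $i$), and then it uses prodensity once more, now with respect to $G'$, to pick $x\in M\cap bG'$, so that $\varepsilon_b(x)=1$ rather than $\delta$.  One then runs a length-reduction in the spirit of Proposition~\ref{proposition: first}: at each stage, for every section $x_w$ with $\varepsilon_{b_\ell}(x_w)\ne0$ one multiplies by a power of $a_\ell$ (available in $M_w$) to land in $\st_{G_\ell}(1)$ and descends.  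Terminating the branches yields a finite family $\{b_{k_i}^{\,d_i}\}$ at various vertices with $\sum_i d_i=\varepsilon_b(x)=1$; pushing each down to the common level $k=\max_i k_i$ (again using that $a_\ell$ is everywhere available) and multiplying gives $b_k\in M_\nu$ for a suitable vertex $\nu$, whence $M_\nu=G_k$.  The crucial point you are missing is this second appeal to prodensity, which supplies an element with $b$-exponent exactly $1$ and thereby escapes the ``everything divisible by $\delta$'' obstruction.
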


\begin{proof}
   As before,
  there is $z \in G'$ such that $a^nz \in M$, where $n$ is as defined in the previous subsection.  From Theorem~\ref{thm:obtaining-a}, we have that $M_u\ge\langle a_k, b_k^{\,\delta}\rangle^{b_k^{\,\lambda}}$, for some $k\in\mathbb{N}$, $\lambda\in\mathbb{Z}$, and vertex~$u$ of level~$k$.  We note that for a vertex $v$ of $T$  and $g\in G_v$, 
\begin{equation}\label{eq:conjugateaway}
  (M_{v})^g = G_{v} \quad\Longleftrightarrow \quad M_v = G_v.
\end{equation}
Hence, without loss of generality we may suppose that $M_u\ge\langle a_k, b_k^{\,\delta}\rangle$. Since $\overline{m}$ is arbitrary, we may also assume that $u$ is the root of the tree. 

Now for every vertex~$v$ of the tree~$T$, observe that
  \begin{equation}
      \label{eq:a-everywhere}
 M_{v}\ge\langle a_{\ell}, b_{\ell}^{\,\delta}\rangle,
  \end{equation}
  where $\ell$ is the level of~$v$ in~$T$. 
  We now proceed as in the proof of Proposition~\ref{proposition: first}. That is, as $M$ is prodense, we find $x\in M\cap b G'$. We consider a component $\varphi_v(x)$, for some $v\in X_1$, such that $x_v:=\varphi_v(x)$ satisfies $\varepsilon_{b_1}(x_v)\ne 0$. By Equation~\eqref{eq:a-everywhere}, we may assume that $x_v\in \st_{G_1}(1)$.  Repeating this process, noting that $|x_v|<|x|$ when $|x|>1$, we eventually end up with an element of the form $b_{k_1}^{\,d_1}$ for some $k_1\in\mathbb{N}$ and $d_1\in\mathbb{Z}^*$. We terminate this process as soon as we reach such a directed automorphism. In this recursive process, we use the notation $x_{w}:=\varphi_u(x_{v})$ for $u\in X_2$, where $w=vu$, and similarly for other levels. 
  Upon doing this process for every
  section $x_\lambda$ of $x$ with $|x_\lambda|\ge 1$ and $\varepsilon_{b_1}(x_\lambda)\neq 0$, we obtain a collection of directed automorphisms $\big\{b_{k_i}^{\,d_i} \mid i\in I\big\}$ for some finite set~$I$, distributed at various vertices across the tree. Writing $k=\max\{k_i\mid i\in I\}$, by Equation~\eqref{eq:a-everywhere} we have $b_k^{\,d_i}\in M_\nu$ for all $i\in I$, where $\nu$ is any vertex of level~$k$.
  Noting that
  \[
  1=\varepsilon_b(x)=\sum_{i\in I} d_i,
  \]
the product of all the $b_k^{\,d_i}$ gives us $b_k$, as required.
\end{proof}

\begin{proposition} \label{isG} Let $G$ be a  branch growing GGS-group with defining vector satisfying the torsion-type condition.  Let $M$ be a prodense subgroup of~$G$. Then there exists a
  vertex~$u$ of~$T$ such that $M_u = G_u$.
\end{proposition}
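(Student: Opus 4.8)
The plan is to splice together Propositions~\ref{proposition: first} and~\ref{proposition: second}; the only real work will be the bookkeeping needed to pass between section groups at different levels and to absorb the conjugating element $a_i^{\,k}$ produced by the first proposition. I would begin by recording the standing facts about $G$: being branch, it acts spherically transitively, so by the remark following Lemma~\ref{lem:fractal} its defining vector lies in~$\mathcal{F}$; hence $G$ is $\sigma$-fractal (Lemma~\ref{lem:fractal}), so that $G_u=\varphi_u(\text{st}_G(u))$ equals $G_i$ for every level-$i$ vertex~$u$, and $G$ is weakly branch, so Lemma~\ref{lem:prodense} applies. I would also note that every shifted group $G_i$ is again a branch growing GGS-group with defining vector in~$\mathcal{F}$: the non-zero entries of~$\mathbf{e}$ survive the shift, and the branching identities of Corollary~\ref{cor:periodic} persist at all levels, so $G_i$ is branch by Theorem~\ref{lem:branch-gamma-3} and $\sigma$-fractal by Lemma~\ref{lem:fractal}. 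In particular Proposition~\ref{proposition: second} is available inside~$G_i$.

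Applying Proposition~\ref{proposition: first} then gives a level-$i$ vertex~$u$, an exponent $\delta\not\equiv 0\pmod p$, and an integer~$k$ with $b_i^{\,\delta}\in(M_u)^{a_i^{\,k}}$. By Lemma~\ref{lem:prodense} the subgroup $M_u$ is prodense in $G_u=G_i$; and since conjugation by $a_i^{\,k}\in G_i$ permutes the non-trivial normal subgroups of~$G_i$, the subgroup $N:=(M_u)^{a_i^{\,k}}$ is again prodense in~$G_i$ and now genuinely contains~$b_i^{\,\delta}$. Feeding $N\le G_i$ into Proposition~\ref{proposition: second} produces a vertex~$w$ of the subtree $T_u\cong T_{\sigma^i(\overline{m})}$, of some level~$j$, with $N_w=(G_i)_w$; and $(G_i)_w=G_{i+j}$ by the $\sigma$-fractality of~$G_i$.

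It then remains to translate the identity $N_w=G_{i+j}$ into a statement about~$M$. If $j=0$ then $N=G_i$, whence $M_u=(G_i)^{a_i^{-k}}=G_i=G_u$ and $v:=u$ works; this is precisely the equivalence~\eqref{eq:conjugateaway}. If $j\ge 1$, the key observation is that $a_i^{\,k}$ is a rooted automorphism of $T_u$, so every section of~$a_i^{\,k}$ at a vertex of level at least~$1$ is trivial. Writing $w'':=w^{a_i^{-k}}$, a direct check then gives $\text{st}_N(w)=\bigl(\text{st}_{M_u}(w'')\bigr)^{a_i^{\,k}}$ and hence $\varphi_w\bigl(\text{st}_N(w)\bigr)=\varphi_{w''}\bigl(\text{st}_{M_u}(w'')\bigr)$, that is, $N_w=(M_u)_{w''}$; composing the restriction maps yields $(M_u)_{w''}=M_{uw''}$. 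Therefore $M_{uw''}=G_{i+j}=G_{uw''}$, and $v:=uw''$ is the required vertex. The one place where genuine care is needed is this final translation — keeping track of how conjugation by the rooted automorphism $a_i^{\,k}$ merely relabels, without otherwise disturbing, the sections below the first level — while everything else is a direct appeal to the two preceding propositions.
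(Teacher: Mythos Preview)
Your proposal is correct and follows essentially the same approach as the paper: apply Proposition~\ref{proposition: first} to obtain $b_i^{\,\delta}$ in a conjugate of~$M_u$, then feed this into Proposition~\ref{proposition: second}. The paper handles the conjugation in one line by invoking~\eqref{eq:conjugateaway} to assume without loss of generality that $b_i^{\,\delta}\in M_{u_1}$ itself, whereas you unpack this step explicitly via the action of the rooted element $a_i^{\,k}$ on sections; your version is more detailed but the underlying argument is the same.
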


\begin{proof}
By Proposition~\ref{proposition: first}, there exists a vertex~$u_1$ of $T$ of level~$j$ and an element $g\in G_{u_1}$ such that $b_j^{\,\delta} \in (M_{u_1})^g$ for some $\delta\not\equiv 0 \pmod p$.
 Using Equation~\eqref{eq:conjugateaway} we may assume without loss
of generality that $b_j^{\,\delta} \in M_{u_1}$.
Then by Proposition~\ref{proposition: second}, there exists a vertex~$v$
of~$T_{u_1}$ of level~$i-j$ 
with $G_i=
M_{u_1v}$. As $G_{u_1v}=G_i$, the result follows.
\end{proof}

\begin{proof}[Proof of Theorem~\ref{start}]
  Suppose on the contrary that $M$ is a proper prodense  subgroup of $G$. By Theorem~\ref{thm:proper},
  for every vertex $u\in T$  we have $M_u$ is properly contained in
  $G_u$. However, by Proposition~\ref{isG}, there is a vertex $v$ of~$T$ such that the subgroup~$M_v$ is all
  of~$G_v$. This gives the required contradiction.
\end{proof}



\subsection{Weakly maximal subgroups}\label{sec:weakly-maximal}

The following results were proved in~\cite{BR}  for regular branch groups, but the proofs extend naturally to all branch groups. 

\begin{theorem}\cite[Analogue of Thm.~1.4]{BR}
Let $T$ be a spherically homogeneous rooted tree and $G\le \textup{Aut }T$ be a finitely generated branch group. 
Suppose 
that   there exists an integer~$\ell$ such that, for each $i\in \mathbb{N}\cup\{0\}$, there is an infinite-index subgroup $Q_i$ 
 of $\varphi_u(\textup{st}_G(u))$, for $u$ any $i$th-level vertex, that does not stabilise any vertex of the $\ell$th layer.
Then 
for any $k\in \mathbb{N}$ there exists a weakly maximal subgroup of $G$ which stabilises the $k$th layer and does not stabilise any vertex of the $(k+\ell)$th layer.
\end{theorem}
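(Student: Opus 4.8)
The plan is to adapt the Zorn's lemma construction of~\cite{BR} to the spherically homogeneous setting. Throughout, fix $k\in\mathbb{N}$ and write $L=\st_G(k)$, which is a normal subgroup of finite index in~$G$. The aim is to produce a subgroup of $L$ of infinite index in~$G$, stabilising no vertex of the $(k+\ell)$th layer, that is maximal with these properties, and then to upgrade maximality inside $L$ to weak maximality in~$G$.

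\emph{The seed.} Applying the hypothesis with $i=k$, for each $k$th-level vertex~$v$ we choose an infinite-index subgroup $Q_v\le G_v=\varphi_v(\text{st}_G(v))$ fixing no vertex of the $\ell$th layer of~$T_v$; by spherical homogeneity the~$Q_v$ may be taken to correspond under the canonical isomorphisms between the subtrees~$T_v$. Put
\[
H_0=\psi_k^{-1}\!\Big(\psi_k(L)\cap\textstyle\prod_{|v|=k}Q_v\Big)\le L.
\]
First I would verify the three properties of~$H_0$ that will persist through the whole argument. It stabilises the $k$th layer, being contained in~$L$. It stabilises no vertex~$w$ of the $(k+\ell)$th layer: if $w$ lies below the $k$th-level vertex~$v$, then every element of~$H_0$ has $v$-section in~$Q_v$, which fixes no vertex of the $\ell$th layer of~$T_v$. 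Finally $[G:H_0]=\infty$: since $G$ is branch, $\psi_k(L)$ contains $\psi_k(\rist_G(k))=\prod_{|v|=k}\varphi_v(\rist_G(v))$, and for each~$v$ the subgroup $\varphi_v(\rist_G(v))$ has finite index in~$G_v$ while $Q_v$ has infinite index, so $\varphi_v(\rist_G(v))\cap Q_v$ has infinite index in $\varphi_v(\rist_G(v))$; hence $\psi_k(H_0)$ has infinite index in~$\psi_k(L)$.

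\emph{Zorn's lemma.} Let $\mathcal{P}$ be the poset, ordered by inclusion, of all subgroups~$H$ with $H_0\le H\le\st_G(k)$, with $[G:H]=\infty$, and stabilising no vertex of the $(k+\ell)$th layer; it is non-empty as $H_0\in\mathcal{P}$. For a chain $\mathcal{C}\subseteq\mathcal{P}$ with union~$U$, clearly $H_0\le U\le\st_G(k)$, and if $U$ fixed a $(k+\ell)$th-level vertex then so would every member of~$\mathcal{C}$. Moreover $[G:U]=\infty$: otherwise $U$ would be a finite-index, hence finitely generated, subgroup of the finitely generated group~$G$, and a finitely generated group equal to a directed union of subgroups must already equal one of its members, against each member having infinite index. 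Thus $U\in\mathcal{P}$ is an upper bound for $\mathcal{C}$, and Zorn's lemma yields a maximal element~$W\in\mathcal{P}$, which by construction has infinite index in~$G$, stabilises the $k$th layer, and stabilises no vertex of the $(k+\ell)$th layer.

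\emph{Weak maximality.} It remains to show $W$ is weakly maximal in~$G$. Suppose $W<K\le G$, and set $K_1=K\cap\st_G(k)$, so $W\le K_1\le\st_G(k)$ and $[K:K_1]\le[G:\st_G(k)]<\infty$. If $[G:K_1]<\infty$ then $[G:K]<\infty$ and we are done. Otherwise $[G:K_1]=\infty$; since $K_1\supseteq W$ also stabilises no $(k+\ell)$th-level vertex we have $K_1\in\mathcal{P}$, so maximality of~$W$ forces $K_1=W$. Thus $K\cap\st_G(k)=W$, whence $W\trianglelefteq K$ and $K/W$ is a nontrivial finite subgroup of~$G/\st_G(k)$. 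I expect the exclusion of this last configuration to be the main obstacle: one must rule out that a strict overgroup of~$W$ escapes $\st_G(k)$ while retaining infinite index. Following~\cite{BR}, the way to handle it is to refine the seed, invoking the hypothesis at \emph{all} levels $i\ge k$ and telescoping the restrictions down the tree, so as to arrange that the resulting maximal subgroup satisfies $N_G(W)\le\st_G(k)$; since in the bad configuration $K$ normalises~$W$, this forces $K\le\st_G(k)$, a contradiction. This is precisely where the branch structure enters, in the same spirit as Corollary~\ref{cor:saturated}: the rigid, coordinatewise form of the refined seed leaves no room for an automorphism normalising~$W$ to move a vertex of the $k$th layer. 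Granting this, $W$ is weakly maximal, and the theorem follows.
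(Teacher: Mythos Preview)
The paper does not supply its own proof here; it simply asserts that the argument of~\cite{BR} for regular branch groups carries over. So there is nothing in the paper to compare against, and I assess your attempt directly. Your overall architecture---seed, Zorn, upgrade to weak maximality---matches~\cite{BR}, but there are two genuine gaps.

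\emph{The seed.} Your justification that $H_0$ fixes no $(k+\ell)$th-level vertex is backwards. You argue that each $v$-section of an element of~$H_0$ lies in~$Q_v$, and~$Q_v$ has no fixed $\ell$th-level vertex; but $\varphi_v(H_0)\subseteq Q_v$ is the wrong inclusion for that conclusion---a proper subgroup of~$Q_v$ can certainly fix a vertex that~$Q_v$ moves. From the branch hypothesis you only obtain $\varphi_v(H_0)\supseteq Q_v\cap\varphi_v(\rist_G(v))$, a finite-index subgroup of~$Q_v$, and a finite-index subgroup of a group acting without fixed points on a \emph{finite} set need not itself act without fixed points. So the seed as defined may fail the very property you need. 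A correct seed must be built so that its projections \emph{contain} specified elements moving each $\ell$th-level vertex, not merely lie inside~$Q_v$.

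\emph{Weak maximality.} You correctly isolate the obstruction (a strict overgroup~$K$ of~$W$ of infinite index with $K\cap\st_G(k)=W$) but do not resolve it. The suggested fix---refine the seed so that $N_G(W)\le\st_G(k)$---is not carried out, and the appeal to Corollary~\ref{cor:saturated} is misplaced: that result identifies $N_{\Aut T}(G)$ with~$\Aut G$ and says nothing about normalisers of subgroups of~$G$ inside~$G$. In~\cite{BR} this step is where the substance of the argument lies; without it your proof is incomplete.
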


In the above result, 
if $G$ is  a branch growing GGS-group, then we can take $\ell=1$ and $Q_i=\langle a_i\rangle$.

\begin{theorem}\cite[Analogue of Thm.~1.1]{BR}
Let $T$ be a spherically homogeneous rooted tree and $G\le \textup{Aut }T$ be a finitely generated branch group. Then, for any finite subgroup $Q\le G$ there exist uncountably many automorphism equivalence classes of weakly maximal subgroups of~$G$ containing~$Q$.
\end{theorem}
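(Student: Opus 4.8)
The plan is to follow the proof of \cite[Thm.~1.1]{BR} essentially verbatim, checking that its two uses of the regular structure are replaced by properties that hold for every branch group acting on a spherically homogeneous tree.

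Recall the shape of that argument. Since $Q$ is finite and $\bigcap_{n}\st_{G}(n)=1$, there is a level $n_{0}$ with $Q\cap\st_{G}(n_{0})=1$, so $Q$ embeds into the finite quotient $G/\st_{G}(n_{0})$; in particular $Q$ is ``concentrated above level $n_{0}$''. One then constructs, for each $\omega$ in an uncountable parameter space $\prod_{k\ge1}A_{k}$ with each $|A_{k}|\ge2$ (say $\{0,1\}^{\mathbb{N}}$), an infinite-index subgroup $K_{\omega}\ge Q$ of $G$: fixing a ray $\xi\in\partial T$, one decides separately at each level $k$, according to $\omega_{k}$, whether to adjoin a certain rigid vertex stabiliser unchanged or to replace it by a proper ``twisted'' subgroup. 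The choices are made so that (a) $K_{\omega}$ omits an infinite family of mutually commuting rigid vertex stabilisers, so $|G:K_{\omega}|=\infty$, and (b) the datum $\omega$ is recoverable from the subquotients $\varphi_{v}(\text{st}_{K_{\omega}}(v)\cap\text{rist}_{G}(v))\le\varphi_{v}(\text{rist}_{G}(v))$ over vertices $v$ of $T$. By Zorn's Lemma each $K_{\omega}$ lies in a weakly maximal subgroup $H_{\omega}$. As $G$ is branch, every subgroup containing some $\rist_{G}(n)$ has finite index, so $H_{\omega}$ contains no $\rist_{G}(n)$; together with the decomposition $\rist_{G}(n)=\prod_{|v|=n}\text{rist}_{G}(v)$ this forces the enlargement $K_{\omega}\rightsquigarrow H_{\omega}$ to leave the $\omega$-trace intact at infinitely many levels.

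For the counting one uses an invariant of a subgroup $H\le G$ that is constant on $\Aut T$-equivalence classes: for each $n$, the isomorphism type of the $G$-set $G/(H\st_{G}(n))$ together with, for the level-$n$ vertices $v$, the conjugacy class of $\varphi_{v}(\text{st}_{H}(v)\cap\text{rist}_{G}(v))$ inside $\varphi_{v}(\text{rist}_{G}(v))$, recorded as a multiset up to the permutation of level-$n$ vertices induced by the automorphism. Any automorphism of $T$ carrying $G$ into itself fixes every $\st_{G}(n)$ and maps each $\text{rist}_{G}(v)$ to $\text{rist}_{G}(v')$ for the matching vertex $v'$, so this datum is indeed invariant. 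One then checks, as in \cite{BR}, that $\omega\mapsto H_{\omega}\mapsto(\text{invariant})$ is countable-to-one and therefore has uncountable image, so the $H_{\omega}$ meet uncountably many $\Aut T$-equivalence classes, each containing $Q$. (If $N_{\Aut T}(G)$ is finite one may bypass the invariant: there are $2^{\aleph_{0}}$ weakly maximal subgroups containing $Q$ and each class is then finite.)

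Only two points of \cite{BR} use regularity, and both persist. First, in the regular case $\varphi_{u}(\text{st}_{G}(u))$ is the same self-similar group for every vertex $u$ of a fixed level; here spherical transitivity together with ($\sigma$-)fractality, available for the groups at hand, gives $\varphi_{u}(\text{st}_{G}(u))=G_{i}$ for every vertex $u$ of level $i$, which is all that is used. Second, the identification of rigid vertex stabilisers across a level and the finite index of $\rist_{G}(n)$ are exactly the definition of a branch group together with the product decomposition of $\rist_{G}(n)$. The one step that needs genuine care is the stability of the $\omega$-trace under the passage to $H_{\omega}$: the construction of $K_{\omega}$ must be arranged so that its distinguishing data lies in subquotients that cannot be saturated without creating finite index, which rests on $\rist_{G}(n)$ having finite index in $G$. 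Granting this, the remaining verifications are the routine ones already carried out in \cite{BR}.
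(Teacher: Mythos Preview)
The paper does not actually supply a proof of this statement: it simply records that the result was proved in \cite{BR} for regular branch groups and asserts that ``the proofs extend naturally to all branch groups.'' Your proposal follows the same route---reproduce the \cite{BR} argument and check that regularity is not essential---so in spirit you are doing exactly what the paper does, only with more detail.

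One point in your sketch should be corrected. You write that the first use of regularity in \cite{BR} is replaced by ``spherical transitivity together with ($\sigma$-)fractality, available for the groups at hand,'' giving $\varphi_{u}(\text{st}_{G}(u))=G_{i}$. But the theorem as stated is for an \emph{arbitrary} finitely generated branch group on a spherically homogeneous tree; fractality is not among the hypotheses, and a general branch group need not be fractal. Fortunately, fractality is not what the \cite{BR} construction actually requires: the argument works with rigid vertex stabilisers $\text{rist}_{G}(v)$, and spherical transitivity alone guarantees that these are pairwise $G$-conjugate across a fixed level (hence isomorphic), and that each is infinite. That, together with the finite index of $\rist_{G}(n)$ in $G$, is all that is used. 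So drop the appeal to fractality and phrase everything in terms of the rigid stabilisers; then your outline matches the paper's claim that the \cite{BR} proof carries over unchanged.
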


\begin{corollary}\cite[Analogue of Cor.~1.2]{BR}
Let $T$ be a spherically homogeneous rooted tree and $G\le \textup{Aut }T$ be a finitely generated branch group. Suppose that $G$ contains a finite subgroup $Q$ that does not fix any point in~$\partial T$. Then there exist uncountably many automorphism equivalence classes of weakly maximal subgroups of~$G$, all distinct from classes of parabolic subgroups associated with the action of~$G$ on~$T$.
\end{corollary}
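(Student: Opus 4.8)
The plan is to deduce this immediately from the preceding theorem, using the hypothesis on~$Q$ to eliminate parabolicity. Recall that the \emph{parabolic subgroups} of~$G$ associated with its action on~$T$ are the boundary-point stabilisers $\text{st}_G(\xi)$ with $\xi\in\partial T$, and that the automorphism equivalence of subgroups appearing in the statements above is, as in~\cite{BR}, realised by conjugation by elements of $\Aut T$; in particular it carries point stabilisers to point stabilisers.

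First I would apply the preceding theorem to the given finite subgroup $Q\le G$, obtaining uncountably many automorphism equivalence classes of weakly maximal subgroups of~$G$, each represented by some $W$ with $Q\le W$. It then suffices to check that no such class is the class of a parabolic subgroup. Suppose, for a contradiction, that some weakly maximal $W\le G$ with $Q\le W$ satisfies $W^{\phi}=\text{st}_G(\xi)$ for some $\phi\in\Aut T$ and $\xi\in\partial T$. Then $Q^{\phi}\le W^{\phi}=\text{st}_G(\xi)\le\text{st}_{\Aut T}(\xi)$, so $Q^{\phi}$ fixes~$\xi$ and hence $Q$ fixes the point $\xi^{\phi^{-1}}\in\partial T$, contradicting the hypothesis that $Q$ fixes no point of~$\partial T$. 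Therefore every one of the uncountably many classes produced above is distinct from the class of any parabolic subgroup, which is the assertion.

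Once the preceding theorem is available there is essentially no further work: the only point that needs attention is the elementary observation that the property ``$Q$ is contained in some point stabiliser'' is invariant under the equivalence being used, which holds because tree automorphisms permute~$\partial T$. Accordingly I do not expect any genuine obstacle here; the content is entirely carried by the previous theorem, which in turn rests on the branch structure of~$G$.
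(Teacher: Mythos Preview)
The paper does not supply its own proof of this corollary; it merely records that the argument of \cite[Cor.~1.2]{BR}, originally given for regular branch groups, carries over verbatim to arbitrary finitely generated branch groups. Your deduction from the preceding theorem is exactly that argument: produce uncountably many $\Aut$-classes of weakly maximal subgroups each containing~$Q$, and rule out parabolicity of any representative since a tree automorphism carries point stabilisers to point stabilisers, so $Q$ would end up fixing a boundary point. This is correct and is precisely the intended route.

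The one point worth flagging is your assertion that automorphism equivalence ``is realised by conjugation by elements of $\Aut T$''. This is indeed how \cite{BR} proceeds, and for branch groups it is justified by the rigidity results of Lavreniuk--Nekrashevych~\cite{LN} (compare Corollary~\ref{cor:saturated} in the present paper), which identify $\Aut G$ with $N_{\Aut T}(G)$; you are right to invoke it, but it is a nontrivial input rather than a definition, so in a fully self-contained write-up you would want to cite it explicitly.
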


Recall that stabilizers of boundary points are called \emph{parabolic subgroups}. Therefore, for a branch growing GGS-group~$G$, taking $Q=\langle a\rangle$, it follows that there are uncountably many automorphism equivalence classes of weakly maximal subgroups of~$G$, all distinct from classes of parabolic subgroups associated with the action of~$G$ on~$T$.


\section{Quotients of growing GGS-groups}\label{sec:Beauville}

Finally, we prove Proposition~\ref{Beauville}. 
We recall that the condition for a finite group~$H$ to be a Beauville group can be reformulated in group-theoretical terms as follows: for $x,y\in H$, let
 \[
 \Sigma(x,y)=\bigcup_{g\in G} \big(\langle x\rangle^g \cup \langle y\rangle^g \cup \langle xy\rangle^g\big),
 \]
that is, the union of all conjugates of the cyclic subgroups generated by $x$, $y$ and $xy$. Then $H$ is a Beauville group if and only if $H$ is 2-generated and there exist generating sets $\{x_1,y_1\}$ and $\{x_2,y_2\}$ of~$H$ such that $\Sigma(x_1,y_1)\cap \Sigma(x_2,y_2)=\{1\}$.  
The sets $\{x_1,y_1\}$ and $\{x_2,y_2\}$ are then called a \emph{Beauville structure} for~$H$.

First we need some preliminary results, and we introduce some notation. Recall that  for a vertex~$w$ at level~$n-1$, 
the map~$\varphi_w$ takes an element in the stabiliser of the vertex~$w$ and projects onto the $w$-th section. Then let $\psi_n^{w}=\psi_1\circ\varphi_{w}$ which has $ \varphi_{w}^{-1}\big(\varphi_{w}(\text{st}_G(w))\cap \st_{G_{n-1}}(1)\big)$ as its domain and $G_n\times \overset{p^{m_n}}\ldots\times G_n$ as its codomain.

\begin{lemma}\label{b-order}
Let $G=\langle a,b\rangle$ be a growing GGS-group with defining vector $\mathbf{e}\in \mathcal{F}$. Then for each $n\ge 2$ the order of $b$ in $G/\st_G(n)$ is $p^{m_n}$.
\end{lemma}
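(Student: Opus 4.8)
The plan is to prove the statement by induction on $n\ge 2$, in each step pushing the computation down one level via the embedding $\psi_1$ and comparing $b$ modulo $\st_G(n)$ with $b_1$ modulo $\st_{G_1}(n-1)$.

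First I would record the key structural fact. Since $b\in\st_G(1)$, for every $k\in\mathbb{Z}$ we have
\[
\psi_1(b^k)=(a_1^{\,ke_1},a_1^{\,ke_2},\ldots,a_1^{\,ke_{p^{m_1}-1}},b_1^{\,k}),
\]
so $b^k\in\st_G(n)$ if and only if each of these $p^{m_1}$ coordinates lies in $\st_{G_1}(n-1)$. As $a_1$ is rooted of order $p^{m_2}$, a power $a_1^{\,j}$ lies in $\st_{G_1}(n-1)$ (with $n-1\ge 1$) precisely when $a_1^{\,j}=1$, that is, when $p^{m_2}\mid j$. Because $\mathbf{e}\in\mathcal{F}$ there is some index $i$ with $e_i\not\equiv 0\pmod p$, hence $\gcd(e_i,p^{m_2})=1$, so the family of conditions $p^{m_2}\mid ke_i$ (over all $i$) collapses to the single condition $p^{m_2}\mid k$. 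This is the only place the hypothesis $\mathbf{e}\in\mathcal{F}$ enters, and it is essential: if $\mathbf{e}\equiv 0\pmod p$ the order of $b$ in $G/\st_G(2)$ would be a proper divisor of $p^{m_2}$.

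For the base case $n=2$, the coordinate $b_1^{\,k}$ automatically lies in $\st_{G_1}(1)$ since $b_1$ is directed and so fixes the first level of $T_1$; thus $b^k\in\st_G(2)$ if and only if $p^{m_2}\mid k$, and $b$ has order $p^{m_2}$ in $G/\st_G(2)$. For the inductive step $n\ge 3$, the shifted group $G_1$ is again a growing GGS-group, acting on the tree with branching sequence $(p^{m_2},p^{m_3},\ldots)$ and with defining vector $(e_1,\ldots,e_{p^{m_1}-1},1,\ldots,1)\in\mathcal{F}$, so the result applied to $G_1$ at level $n-1\ge 2$ gives that $b_1$ has order $p^{m_n}$ in $G_1/\st_{G_1}(n-1)$. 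Combining, $b^k\in\st_G(n)$ if and only if $p^{m_2}\mid k$ and $p^{m_n}\mid k$; since $m_1<m_2<\cdots$ forces $m_n>m_2$, this reduces to $p^{m_n}\mid k$, whence $b$ has order $p^{m_n}$ in $G/\st_G(n)$.

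The argument is essentially bookkeeping, and I do not anticipate a genuine obstacle; the one point demanding a little care is keeping track of the index shift in the inductive step (level $n$ for $G$ corresponds to level $n-1$ for $G_1$, whose $j$th branching exponent is $m_{j+1}$) and verifying that the defining vector of $G_1$ still lies in $\mathcal{F}$, so that the inductive hypothesis is legitimately applicable.
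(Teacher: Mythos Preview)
Your proof is correct and takes essentially the same approach as the paper---both push $b$ down the tree and use that the rooted automorphism $a_{n-1}$ has order $p^{m_n}$---though you organise it inductively via $\psi_1$ while the paper unrolls directly to level $n-1$ in one step. One minor slip: the defining vector of $G_1$ is $(e_1,\ldots,e_{p^{m_1}-1},0,\ldots,0)$, not $(e_1,\ldots,e_{p^{m_1}-1},1,\ldots,1)$; this does not affect your argument, since membership in $\mathcal{F}$ is inherited from the original~$\mathbf{e}$.
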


\begin{proof}
For all $n\ge 2$, observe that as $a_k^{\,p^{m_{n-1}}}=1$ for all $1\le k\le n-2$,
\begin{align*}
&\psi_{n-1}^w(b^{p^{m_{n-1}}})\\
&\quad=\begin{cases}
(1,\overset{p^{m_{n-1}}}\ldots, 1) & \text{if }w\ne\prod_{k=1}^{n-2} p^{m_k},\\
(a_{n-1}^{\,(e_1)p^{m_{n-1}}},a_{n-1}^{\,(e_2)p^{m_{n-1}}},\ldots,a_{n-1}^{\,(e_{p^{m_1}-1})p^{m_{n-1}}},1,\overset{p^{m_{n-1}} -p^{m_1}}\ldots,1, b_{n-1}^{\,p^{m_{n-1}}}) & \text{if }w=\prod_{k=1}^{n-2} p^{m_k}.
\end{cases}
\end{align*}
Since the order of~$a_{n-1}$ is $p^{m_n}$ and recalling that $\mathbf{e}\in\mathcal{F}$, it follows that
\[
\psi_{n-1}(b^{p^{m_n}})=(1,\overset{p^{(m_1+\cdots +m_{n-1})}-1}\ldots,1,b_{n-1}^{\,p^{m_n}}),
\]
and hence the order of $b$ in $G/\st_G(n)$ is $p^{m_n}$, as required.
\end{proof}

\begin{lemma}\label{lem:order}
Let $G$ be a growing GGS-group associated to a defining vector $\mathbf{e}\in \mathcal{F}$ of zero sum and let $n\ge 3$.  Then for $1\le i \le p-1$, $0\le s\le m_2-1$, the order of $ab^{ip^s}$ is $p^{m_n+m_1-s}$ in $G/\st_G(n)$.
\end{lemma}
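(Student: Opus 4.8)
The plan is to analyse $g:=ab^{ip^s}$ one level at a time. Since $b\in\st_G(1)$, the element $g$ acts on the first layer exactly as the $p^{m_1}$-cycle $a$; hence whenever $g^k\in\st_G(n)\subseteq\st_G(1)$ we must have $p^{m_1}\mid k$, while $g^{p^{m_1}}\in\st_G(1)$. Consequently the order of $g$ in $G/\st_G(n)$ equals $p^{m_1}$ times the order of the image of $g^{p^{m_1}}$, so it suffices to determine the order of $g^{p^{m_1}}$ modulo $\st_G(n)$.

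First I would compute $\psi_1(g^{p^{m_1}})$. Writing $c:=b^{ip^s}$, we have $\psi_1(c)=(a_1^{\,ip^se_1},\ldots,a_1^{\,ip^se_{p^{m_1}-1}},b_1^{\,ip^s})$, and by the identity $(ac)^{p^{m_1}}=c^{a^{p^{m_1}-1}}c^{a^{p^{m_1}-2}}\cdots c^ac$, valid since $a^{p^{m_1}}=1$ (cf. the computation of $\varphi_j(x)^{p^{m_2}}$ in the proof of Proposition~\ref{proposition: first}), each coordinate of $\psi_1(g^{p^{m_1}})$ is a cyclic product of all $p^{m_1}$ entries of $\psi_1(c)$. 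Exactly one of these entries, namely $b_1^{\,ip^s}$, is not a power of $a_1$, and powers of $a_1$ commute among themselves; so every coordinate of $\psi_1(g^{p^{m_1}})$ has the form $a_1^{\,x}\,b_1^{\,ip^s}\,a_1^{\,y}$ with $x+y=ip^s\sum_{k=1}^{p^{m_1}-1}e_k$. The zero-sum hypothesis $\sum_{k}e_k=0$ forces $y=-x$, so each coordinate of $\psi_1(g^{p^{m_1}})$ is a conjugate of $b_1^{\,ip^s}$ in $G_1$.

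Next I would descend to $G_1$. An element of $\st_G(1)$ lies in $\st_G(n)$ if and only if all of its $\psi_1$-coordinates lie in $\st_{G_1}(n-1)$; and since $\st_{G_1}(n-1)\trianglelefteq G_1$, a conjugate of $b_1^{\,ip^st}$ lies in $\st_{G_1}(n-1)$ if and only if $b_1^{\,ip^st}$ does. Hence $g^{p^{m_1}t}=\bigl(g^{p^{m_1}}\bigr)^t\in\st_G(n)$ if and only if $b_1^{\,ip^st}\in\st_{G_1}(n-1)$. Now I apply Lemma~\ref{b-order} to the shifted growing GGS-group $G_1$, which acts on the tree with branching sequence $(p^{m_2},p^{m_3},\ldots)$ and whose defining vector again lies in $\mathcal{F}$: since $n-1\ge 2$, the order of $b_1$ in $G_1/\st_{G_1}(n-1)$ equals $p^{m_n}$. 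As $\gcd(i,p)=1$ and $s\le m_2-1<m_n$, the condition $b_1^{\,ip^st}\in\st_{G_1}(n-1)$ is equivalent to $p^{m_n-s}\mid t$. The least such $t$ is $p^{m_n-s}$, so the order of $ab^{ip^s}$ in $G/\st_G(n)$ is $p^{m_1}\cdot p^{m_n-s}=p^{m_n+m_1-s}$.

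The only genuinely delicate step is the coordinatewise computation of $g^{p^{m_1}}$ in the second paragraph — in particular the bookkeeping of the cyclic product and the precise place where the zero-sum hypothesis enters. Once that is settled, the remainder is a routine order count resting on the normality of the level stabilisers together with Lemma~\ref{b-order} applied one shift down.
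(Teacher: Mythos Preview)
Your proof is correct and follows essentially the same route as the paper's: both compute $(ab^{ip^s})^{p^{m_1}}$ via the identity $(ac)^{p^{m_1}}=c^{a^{p^{m_1}-1}}\cdots c^ac$, use the zero-sum hypothesis to see that every $\psi_1$-coordinate is a $G_1$-conjugate of $b_1^{\,ip^s}$, and then invoke Lemma~\ref{b-order} for $G_1$ to read off the order. Your write-up is slightly more explicit about the reduction to the order of $b_1^{\,ip^s}$ modulo $\st_{G_1}(n-1)$ and about why conjugation is harmless (normality of $\st_{G_1}(n-1)$), but there is no substantive difference in strategy.
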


\begin{proof}
Let $1\le  i\le p-1$ and $0\le s \le m_2-1$. Now 
$$
(ab^{ip^s})^{p^{m_1}} = (b^{ip^s})^{a^{{p^{m_1} }-1}}(b^{ip^s})^{a^{{p^{m_1} }-2}}\cdots (b^{ip^s})^{a}b^{ip^s},
$$
and hence, as $e_1+\cdots+e_{p^{m_1}-1}=0$,
\[
\psi_1\big((ab^{ip^s})^{p^{m_1}}\big) = \big((b_1^{\,ip^s})^{a_1^*},\ldots ,(b_1^{\,ip^s})^{a_1^*}, b_1^{\,ip^s},b_1^{\,ip^s} \big),
\]
where $*$ stands for unspecified exponents. Then
\[
\psi_1\big((ab^{ip^s})^{p^{m_n+m_1-s}}\big) = \big((b_1^{ip^{m_n}})^{a_1^*},\ldots ,(b_1^{ip^{m_n}})^{a_1^*}, b_1^{ip^{m_n}}, b_1^{ip^{m_n}}\big).
\]
Thus, upon applying Lemma~\ref{b-order} to~$G_1$, we see that the order of $ab^{ip^s}$ is $p^{m_n+m_1-s}$ in $G/\st_G(n)$.
\end{proof}

The proof of the next result follows just as in~\cite[Prop.~3.4]{GUA}, but due to the more complicated notation, we give a full proof for completeness. In the following, by abuse of notation, we still write $\psi_i$, $i\in\mathbb{N}$, for the corresponding map applied to the quotient $G/\st_G(n)$.

\begin{lemma}\label{key}
Let $G$ be a growing GGS-group associated to a defining vector $\mathbf{e}\in \mathcal{F}$ of zero sum and let $n\ge 3$. In the quotient group $G/\st_G(n)$, if
\[
\langle (ab^{ip^s})^{p^{m_n+m_1{-s}-1}}\rangle = \langle (ab^{jp^t})^{p^{m_n+m_1{-t}-1}}\rangle^g
\]
for $1\le i,j\le p^{m_n}-1$ with $i,j\not\equiv 0 \pmod p$, $0\le s,t\le m_2-1$, and $g\in G/\st_G(n)$, then $ip^s\equiv jp^t \pmod {p^{m_2}}$.
\end{lemma}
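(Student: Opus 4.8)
The plan is to exploit the structure of powers of the elements $ab^{ip^s}$ established in Lemma~\ref{lem:order} and its proof. First I would note that since $g$ normalises the cyclic group $\langle (ab^{jp^t})^{p^{m_n+m_1-t-1}}\rangle$, it suffices to understand the cyclic subgroup $\langle (ab^{ip^s})^{p^{m_n+m_1-s-1}}\rangle$ up to conjugacy. By Lemma~\ref{lem:order}, the order of $ab^{ip^s}$ in $G/\st_G(n)$ is $p^{m_n+m_1-s}$, so $(ab^{ip^s})^{p^{m_n+m_1-s-1}}$ has order exactly $p$; the same holds for the other element with $s$ replaced by $t$. So both sides of the hypothesised equality are subgroups of order $p$, and the content is a statement about which order-$p$ subgroups can be conjugate.

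The key computational input is the explicit form of $\psi_1\big((ab^{ip^s})^{p^{m_1}}\big)$ from the proof of Lemma~\ref{lem:order}: it is $\big((b_1^{\,ip^s})^{a_1^*},\ldots,(b_1^{\,ip^s})^{a_1^*},b_1^{\,ip^s},b_1^{\,ip^s}\big)$. The plan is to iterate $\psi$ down the rightmost branch: applying $\psi$ repeatedly, and using that the rightmost section of $b_1$ is again a ``$b$-type'' directed automorphism $b_2$, one sees that the rightmost section at level~$n-1$ of $(ab^{ip^s})^{p^{m_n+m_1-s-1}}$ is a power of $b_{n-1}$ — concretely a nontrivial power of $b_{n-1}^{\,p^{m_n-1}}$ determined by $ip^s \bmod p^{m_2}$, which by Lemma~\ref{b-order} applied to $G_{n-1}$ is an element of order $p$ in $G_{n-1}/\st_{G_{n-1}}(1)$. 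Tracking the $a_{n-1}$-exponent that arises in that rightmost section (which depends on $ip^s$ modulo $p^{m_2}$ via the defining vector), one extracts an invariant of the conjugacy class of $\langle (ab^{ip^s})^{p^{m_n+m_1-s-1}}\rangle$ that records $ip^s$ modulo $p^{m_2}$. Matching this invariant on the two sides forces $ip^s \equiv jp^t \pmod{p^{m_2}}$.

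I expect the main obstacle to be bookkeeping the conjugating element $g$: one must argue that the level-$(n-1)$ sections of a conjugate can be read off from the sections of the original, together with how $\varphi_w$ interacts with conjugation, and in particular that conjugation cannot move the ``rightmost directed'' section off the rightmost branch or destroy the $b_{n-1}$-exponent data modulo~$p$. A clean way to handle this is to reduce modulo an appropriate term of the lower central series (as in the proofs of Lemmata~\ref{lem:first-over-derived} and~\ref{lem:super-strongly-fractal}), so that conjugation acts trivially on the relevant quotient and the $a_{n-1}$- and $b_{n-1}$-exponents of the rightmost section become genuine conjugacy invariants; the desired congruence then follows by comparing these exponents, exactly as in~\cite[Prop.~3.4]{GUA}.
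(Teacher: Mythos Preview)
Your setup is right: both sides are cyclic of order~$p$, and the first-level decomposition
\[
\psi_1(w_{i,s})=\big((b_1^{\,ip^{m_n-1}})^{a_1^{\,ip^s e_1}},\ldots,(b_1^{\,ip^{m_n-1}})^{a_1^{\,ip^s(e_1+\cdots+e_{p^{m_1}-2})}},\,b_1^{\,ip^{m_n-1}},\,b_1^{\,ip^{m_n-1}}\big)
\]
is the key object. But the plan to descend to the rightmost section at level~$n-1$ and read off a conjugacy invariant does not work. Along the rightmost path each section of $w_{i,s}$ is a pure power of the directed generator: at level~$k$ one gets $b_k^{\,ip^{m_n-1}}$, with no $a_k$-exponent at all. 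Worse, at level~$n-1$ this section lives in $G_{n-1}/\st_{G_{n-1}}(1)$, where $b_{n-1}$ is already trivial; so the rightmost level-$(n-1)$ section of $w_{i,s}$ is the identity, and your claimed invariant is empty. Even at an intermediate level~$k$, the section $b_k^{\,ip^{m_n-1}}$ in $G_k/\st_{G_k}(n-k)$ determines only $i \pmod p$, never $ip^s \pmod{p^{m_2}}$.

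The datum $ip^s \pmod{p^{m_2}}$ is not stored in any single section; it is encoded in the \emph{pattern of $a_1$-exponents across all first-level components}, and conjugation by $g=a^q h_q$ both cyclically permutes these components and shifts each $a_1$-exponent. This is exactly what the argument in~\cite[Prop.~3.4]{GUA} (which the paper follows) handles: one first uses the abelianisation of $G_1/\st_{G_1}(n-1)$ on a single component to pin down the correct power $k$ with $ik\equiv j\pmod p$, normalises to $x_{i,s}=x_{j,t}^{g}$, and then compares the two $a_1$-exponent patterns componentwise. This forces $h_q$ (after correcting by explicit conjugates $(b^{-j})^{a^{\ell}}$) to satisfy $\psi_1(h_0^{\,a})\equiv\psi_1(h_0 b^{\,jp^t-ip^s})$ modulo $\st_{G_1}(1)$ in each coordinate, whence $b^{\,ip^s-jp^t}[h_0,a]\in\st_G(2)/\st_G(n)$ and so $b^{\,ip^s-jp^t}\in(G/\st_G(2))'$; Lemma~\ref{b-order} then gives $ip^s\equiv jp^t\pmod{p^{m_2}}$. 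In short, you must stay at level~$1$ and track the global $a_1$-pattern rather than descending along a single path.
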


\begin{proof}
 Write $w_{\ell,r}=(ab^{\ell p^r})^{p^{m_n+m_1{-r}-1 }}$ for  $1\le \ell\le p^{m_n}-1$ with $\ell\not\equiv 0\pmod p$, and $0\le r\le m_2-1$. Note that by Lemma~\ref{lem:order}, the groups $\langle w_{i,s}\rangle$ and  $\langle w_{j,t}\rangle^g$ are both of order~$p$.
  As $\langle w_{i,s}\rangle = \langle w_{j,t}\rangle^g$ we obtain
 \begin{equation}\label{eq:w-intersect}
 w_{i,s}^k=w_{j,t}^g 
 \end{equation}
 for some $1\le k\le p-1$. Now
 \begin{align*}
 \psi_1(w_{i,s})=\big((b_1^{\,ip^{m_n-1}})^{a_1^{ip^se_1}},(b_1^{\,ip^{m_n-1}})^{a_1^{ip^s(e_1+e_2)}},\ldots,
 (b_1^{\,ip^{m_n-1}})^{a_1^{ip^s(e_1+\cdots +e_{p^{m_1}-2})}} ,b_1^{\,ip^{m_n-1}},b_1^{\,ip^{m_n-1}}\big).
 \end{align*}
Likewise, the components of $\psi_1(w_{j,t}^g)$ are of the form $(b_1^{\,jp^{m_n-1}})^{a_1^{\,\mu}h}$, where $\mu\in\{0,\ldots,p^{m_2}-1\}$ and $h\in \st_{G_1}(1)/\st_{G_1}(n-1)$. 
 
 As one of the components of $\psi_1(w_{i,s}^k)$ is $b_1^{\,ikp^{m_n-1}}$, it follows from Equation~\eqref{eq:w-intersect} that $(b_1^{\,jp^{m_n-1}})^{a_1^{\,\mu}h}=b_1^{\,ikp^{m_n-1}}$ in $G_1/\st_{G_1}(n-1)$ for some~$\mu$ and~$h$. Therefore, 
 \[
 b_1^{\,(ik-j)p^{m_n-1}}=[b_1^{\,jp^{m_n-1}},a_1^{\,\mu}h]\in (G_1/\st_{G_1}(n-1))',
 \]
 and hence $ik-j\equiv 0 \pmod p$. Thus as the groups generated by $w_{i,s}$ and $w_{j,t}^g$ both have order~$p$ we have that $w_{i,s}^{i^{-1}j}=w_{j,t}^g$, where $i^{-1}\in\mathbb{F}_p^*$. Writing $x_{\ell,r}=w_{\ell,r}^{\ell^{-1}}$ 
 gives
 \[
 x_{i,s}=x_{j,t}^g.
 \]
 
 Note that
 \[
 \psi_1(x_{i,s})=\big((b_1^{\,p^{m_n-1}})^{a_1^{\,ip^se_1}},(b_1^{\,p^{m_n-1}})^{a_1^{\,ip^s(e_1+e_2)}},\ldots , b_1^{\,p^{m_n-1}}  ,b_1^{\,p^{m_n-1}}\big).
 \]
  Write $g=a^{q}h_q$ for some $0\le q\le p^{m_1}-1$ and $h_q\in \st_G(1)/\st_G(n)$. Observe that
 \begin{align*}
 \psi_1(x_{j,t}^{a^q})=\big((b_1^{\,p^{m_n-1}})^{a_1^{\,jp^t(e_1+\cdots +e_{p^{m_1}-(q-1)})}},\ldots, b_1^{\,p^{m_n-1}}, 
 b_1^{\,p^{m_n-1}},
 \ldots, (b_1^{\,p^{m_n-1}})^{a_1^{\,jp^t(e_1+\cdots+e_{p^{m_1}-q})}}\big),
 \end{align*}
 where the first $b_1^{\,p^{m_n-1}}$ occurs at the $(q-1)$st component. The equation $x_{i,s}=x_{j,t}^g$ now implies that
 \begin{align*}
 \psi_1(h_q)=\big(a_1^{\,ip^se_1-jp^t(e_1+\cdots+e_{p^{m_1}-(q-1)})}u_1&, a_1^{\,ip^s(e_1+e_2)-jp^t(e_1+\cdots+e_{p^{m_1}-(q-2)})}u_2, \ldots, \\
 &\qquad \qquad \qquad\quad \qquad a_1^{-jp^t(e_1+\cdots+e_{p^{m_1}-q})}u_{p^{m_1}}\big),
 \end{align*}
 where $u_\ell \in \st_{G_1}(1)/\st_{G_1}(n-1)$ for all $1\le \ell \le p^{m_1}$.

If $q>0$, we next recursively define  elements $h_{\ell-1}=h_{\ell}(b^{-j})^{a^{\ell-1}}$ for  $\ell=q,\ldots,1$. Since $e_1+\cdots+e_{p^{m_1}-1}=0$, we have
 \[
 \psi_1(h_0)=\big(a_1^{\,(ip^s-jp^t)e_1}v_1, a_1^{\,(ip^s-jp^t)(e_1+e_2)}v_2, \ldots,a_1^{\,(ip^s-jp^t)(e_1+\cdots+e_{p^{m_1}-2})}v_{p^{m_1}-2},v_{p^{m_1}-1},v_{p^{m_1} }\big)
 \]
 with $v_\ell\in\st_{G_1}(1)/\st_{G_1}(n-1)$ for all $1\le \ell \le p^{m_1}$.
 
 Observe that
 \[
 \psi_1(h_0^{\,a})\equiv \psi_1(h_0b^{jp^t-ip^s})\quad \pmod {\tfrac{\st_{G_1}(1)}{\st_{G_1}(n-1)}\times \overset{p^{m_1}}\ldots\times \tfrac{\st_{G_1}(1)}{\st_{G_1}(n-1)}}.
 \]
 Hence
 \[
 \psi_1(b^{ip^s-jp^t}[h_0,a])\in \left(\tfrac{\st_{G_1}(1)}{\st_{G_1}(n-1)}\times \overset{p^{m_1}}\cdots\times \tfrac{\st_{G_1}(1)}{\st_{G_1}(n-1)}\right)\,\, \bigcap\,\, \psi_1\Big(\tfrac{\st_{G}(1)}{\st_{G}(n)}\Big)= \psi_1\Big(\tfrac{\st_{G}(2)}{\st_{G}(n)}\Big).
 \]
 Thus $b^{ip^s-jp^t}[h_0,a]\in \tfrac{\st_{G}(2)}{\st_{G}(n)}$, and so $b^{ip^s-jp^t}\equiv [h_0,a]$ modulo $\tfrac{\st_{G}(2)}{\st_{G}(n)}$. Hence
 \[
 b^{ip^s-jp^t}\in \Big(\frac{G}{\st_{G}(2)}\Big)',
 \]
 which implies, by Lemma~\ref{b-order}, that $ip^s-jp^t\equiv 0 \pmod {p^{m_2}}$, as required. 
  \end{proof}

We now prove that $G/\st_G(n)$ is a Beauville group for all $n\ge 3$. The proof is similar to that of~\cite[Thm.~3.5]{GUA}.

\begin{proof}[Proof of Proposition~\ref{Beauville}(i)]
Certainly $\{a^{-2},ab\}$ and $\{ab^2,b\}$ are both systems of generators of $G/\st_G(n)$. We will show that they yield a Beauville structure for $G/\st_G(n)$.

Set $X=\{a^{-2},ab,a^{-1}b\}$ and $Y=\{ab^2,b,ab^3\}$. We need to prove that
\begin{equation}\label{gen-intersection}
\langle x\rangle \cap \langle y^g\rangle =1
\end{equation}
for all $x\in X$, $y\in Y$ and $g\in G/\st_G(n)$.

Assume first that $x=a^{-2}$. As $x^{p^{m_1-1}}\not \in \st_{G}(1)/\st_G(n)$ but $b^{p^{m_n-1}}\in \st_{G}(1)/\st_G(n)$, it follows that
\[
\langle x^{p^{m_1-1}} \rangle \cap \langle b^{p^{m_n-1}}\rangle ^g=1,
\]
and hence Equation~\eqref{gen-intersection} holds for $x=a^{-2}$ with $y=b$. A similar argument holds for $x=a^{-2}$ with $y=ab^2$ or $y=ab^3$, where here one compares $a^{-2p^{m_1-1}}$ with $(ab^i)^{p^{m_n+m_1-1}}$, for $i\in \{2,3\}$, apart from when $p=3$ where one compares $(ab^3)^{3^{m_n+m_1-2}}$.

For $x=ab$ with $y\in\{ab^2$, $ab^3\}$, the result follows from Lemma~\ref{key}, noting that $p^{m_2}\ge p^2\ge 9$. Likewise for $x=a^{-1}b$ and the above choices for $y$.

It remains to check Equation~\eqref{gen-intersection} for $y=b$ with $x\in\{ab,a^{-1}b\}$. We will check that
\[
\langle (ab)^{p^{m_n+m_1-1}}\rangle \cap \langle b^{p^{m_n-1}}\rangle ^g=1,
\]
and the case $x=a^{-1}b$ follows similarly.

Note that 
\[
\psi_1\big((ab)^{p^{m_n+m_1-1}}\big)=\big((b_1^{\,p^{m_n-1}})^{a_1^*},\ldots ,(b_1^{\,p^{m_n-1}})^{a_1^*}, b_1^{\,p^{m_n-1}}, b_1^{\,p^{m_n-1}}\big)
\]
and 
\[
\psi_1(b^{p^{m_n-1}})=(1,\ldots,1,b_1^{\,p^{m_n-1}}).
\]
Hence any element in $\langle b^{p^{m_n-1}}\rangle ^g$ is in $\text{rist}_G(v)\st_G(n)/\st_G(n)$ for some first-level vertex~$v$. The only element in $\langle (ab)^{p^{m_n+m_1-1 }}\rangle$ that has this property is the trivial element.
Hence we are done.
\end{proof}

Lastly, we consider growing  GGS-groups associated to a defining vector of non-zero sum $s\not\equiv 0 \pmod p$. We show below that all level stabiliser quotients of such growing GGS-group are not Beauville groups.

\begin{lemma}
Let $G$ be a growing GGS-group associated to a defining vector of non-zero sum  $s\not\equiv 0 \pmod p$. For each $n\ge 2$, let $t_{n-1}:=m_1+\cdots+m_{n-1}$. Then the centre $Z(G/\st_G(n))$   contains $\langle (ab)^{p^{t_{n-1}}}\rangle=\langle (a^ib^jc)^{p^{t_{n-1}}}\rangle\cong C_{p^{m_{n}}}$,
for $1\le i\le p^{m_1}-1$, $1\le j \le p^{m_2}-1$ with $i,j\not\equiv 0 \pmod p$ and $c\in \big(G/\st_G(n)\big)'$.
\end{lemma}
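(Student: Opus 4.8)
The plan is to pass to the finite quotient $\bar G=G/\st_G(n)$ and to work through the injective homomorphism
\[
\st_G(n-1)/\st_G(n)\;\hookrightarrow\;\prod\nolimits_{|v|=n-1}G_{n-1}/\st_{G_{n-1}}(1),\qquad g\mapsto\big(\varphi_v(g)\,\st_{G_{n-1}}(1)\big)_{|v|=n-1},
\]
whose kernel is exactly $\st_G(n)$ and whose target is abelian, since $G_{n-1}/\st_{G_{n-1}}(1)=\langle a_{n-1}\rangle\,\st_{G_{n-1}}(1)/\st_{G_{n-1}}(1)\cong C_{p^{m_n}}$ with $\st_{G_{n-1}}(1)=\langle b_{n-1}\rangle^{G_{n-1}}=\ker\epsilon_{a_{n-1}}$ (using Lemma~\ref{lem:first-over-derived} and Proposition~\ref{abelianization}). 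Thus for $x\in\st_G(n-1)$ its image is recorded by the tuple $\big(\epsilon_{a_{n-1}}(\varphi_v(x))\big)_v$. Writing $\omega=(ab)^{p^{t_{n-1}}}$, I aim to show: (i) $\omega\in\st_G(n-1)$ and $\epsilon_{a_{n-1}}(\varphi_v(\omega))$ is a unit modulo $p^{m_n}$, the \emph{same} unit for every level-$(n-1)$ vertex $v$; and (ii) the same holds for the conjugates $\omega^a,\omega^b$ and for $(a^ib^jc)^{p^{t_{n-1}}}$. Granting (i)--(ii), injectivity of the displayed map gives at once that $\langle\omega\rangle\cong C_{p^{m_n}}$ (its image is the full diagonal, of order $p^{m_n}$), that $\omega^a=\omega=\omega^b$ in $\bar G$ whence $\omega$ is central because $\bar G=\langle a,b\rangle$, and that $\langle(a^ib^jc)^{p^{t_{n-1}}}\rangle=\langle\omega\rangle$ since both images generate the same diagonal subgroup.

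The crux is the following inductive claim (writing $t_k=m_1+\cdots+m_k$): if $h\in G$ has $\epsilon_a(h)\not\equiv0\pmod p$ and $\epsilon_b(h)\not\equiv0\pmod p$, then $h^{p^{t_k}}\in\st_G(k)$ for every $k\ge1$, and for every level-$k$ vertex $v$ one has $\epsilon_{b_k}(\varphi_v(h^{p^{t_k}}))=\epsilon_b(h)$ and $\epsilon_{a_k}(\varphi_v(h^{p^{t_k}}))=s_{k-1}\,\epsilon_b(h)$, where $s_r$ denotes the sum of the defining vector $(e_1,\ldots,e_{p^{m_1}-1},1,\ldots,1)$ of $G_r$; note $s_r\equiv s\not\equiv0\pmod p$ with $s=\sum_ie_i$. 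I will prove this by induction on $k$, using that $\overline m$ is arbitrary so the one-step passage from $G$ to $G_1$ applies verbatim inside each $G_r$. For the one-step passage: since $\gcd(\epsilon_a(h),p^{m_1})=1$, $h$ acts on the first layer as a $p^{m_1}$-cycle, so writing $h=a^rh'$ with $h'\in\st_G(1)$ we get $h^{p^{m_1}}\in\st_G(1)$ and the standard wreath computation shows $\varphi_v(h^{p^{m_1}})$ is a product, in a $v$-dependent order, of all the sections $\varphi_w(h')$, $|w|=1$; applying the homomorphisms $\epsilon_{a_1},\epsilon_{b_1}$ on $G_1$ kills the ordering, giving $\epsilon_{a_1}(\varphi_v(h^{p^{m_1}}))=\sum_{|w|=1}\epsilon_{a_1}(\varphi_w(h'))$ and likewise for $\epsilon_{b_1}$, visibly independent of $v$. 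To evaluate these I will use, besides the already recorded identity $\sum_{|w|=1}\epsilon_{b_1}(\varphi_w(x))=\epsilon_b(x)$ for $x\in\st_G(1)$, the companion identity
\[
\sum\nolimits_{|w|=1}\epsilon_{a_1}(\varphi_w(x))=s\,\epsilon_b(x)\qquad(x\in\st_G(1)),
\]
valid because both sides are homomorphisms $\st_G(1)\to\mathbb Z/p^{m_2}\mathbb Z$ agreeing on the generating set $\{b^{a^k}:0\le k\le p^{m_1}-1\}$ of $\st_G(1)=\langle b\rangle^G$: as $w$ runs over the first layer, $\varphi_w(b^{a^k})$ runs over $a_1^{\,e_1},\ldots,a_1^{\,e_{p^{m_1}-1}},b_1$, so the left side is $s$ and the right side $s\cdot1$. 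Since $\epsilon_b(h')=\epsilon_b(h)$, this settles $k=1$; the inductive step repeats the one-step passage inside $G_k$ applied to each section $\varphi_v(h^{p^{t_k}})\in G_k$, whose $\epsilon_{a_k}$- and $\epsilon_{b_k}$-values are units mod $p$, using $\big(h^{p^{t_k}}\big)^{p^{m_{k+1}}}=h^{p^{t_{k+1}}}$.

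Finally, item (ii). Applying the claim to $h=ab$ (so $\epsilon_a=\epsilon_b=1$) gives (i) with unit $s_{n-2}$, and to $h=a^ib^jc$ (so $\epsilon_a=i$, $\epsilon_b=j$, as $c\in G'$) gives the analogous statement with unit $s_{n-2}j$. For the conjugates: $a=a_0$ is rooted, hence permutes the level-$(n-1)$ vertices without twisting, so $\varphi_v(\omega^a)=\varphi_{a^{-1}(v)}(\omega)$, which has the same $\epsilon_{a_{n-1}}$-value as $\varphi_v(\omega)$ by (i); and $\varphi_v(\omega^b)=\varphi_{v''}(\omega)^{\,b_{v''}}$ where $v''=v^{b^{-1}}$ and $b_{v''}$ is the section of $b$ at $v''$, which by the recursive definition of $b$ equals $b_{n-1}$ if $v''$ lies on the distinguished rightmost ray and is a power of $a_{n-1}$ otherwise — in either case an element of $G_{n-1}$, so conjugation by it leaves $\epsilon_{a_{n-1}}$ unchanged. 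I expect the main obstacle to be the bookkeeping in the inductive claim: one must check at \emph{every} level that the $\epsilon_{a_{n-1}}$-value is literally the same for all vertices $v$ (this is exactly what forces the image onto the diagonal, and hence yields both centrality and the coincidence of the cyclic subgroups), together with the correct identification of $b_{v''}$; the homomorphism identity $\sum_{|w|=1}\epsilon_{a_1}(\varphi_w(x))=s\,\epsilon_b(x)$ is the one genuinely new ingredient beyond the zero-sum computations of Section~\ref{sec:Beauville}.
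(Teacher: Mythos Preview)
Your proof is correct and follows essentially the same approach as the paper. The paper computes that each level-$(n-1)$ section of $(a^ib^jc)^{p^{t_{n-1}}}$ equals $a_{n-1}^{\,js}b_{n-1}^{\,j}$ modulo $G_{n-1}'$, hence $a_{n-1}^{\,js}$ modulo $\st_{G_{n-1}}(1)$---a constant unit, independent of the vertex---and then simply writes ``Therefore the result follows''; your argument is the same computation recast through the homomorphisms $\epsilon_{a_{n-1}},\epsilon_{b_{n-1}}$, with the implicit steps (injectivity of the embedding $\st_G(n-1)/\st_G(n)\hookrightarrow\prod C_{p^{m_n}}$, invariance of the diagonal under conjugation by $a$ and $b$, and coincidence of the cyclic subgroups) spelled out in full. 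One small notational slip: the defining vector of $G_r$ for $r\ge1$ has \emph{zeros}, not ones, in positions $p^{m_1},\ldots,p^{m_{r+1}}-1$ (the ``$1$'' in the definition of $b_{n-1}$ is the identity automorphism, not $a_n^{\,1}$), so in fact $s_r=s$ exactly; your conclusion $s_r\equiv s\not\equiv0\pmod p$ is unaffected.
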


\begin{proof}
Let $i\in\{1,\ldots, p^{m_1}-1\}$ and $j\in \{1,\ldots, p^{m_2}-1\}$ with $i,j\not\equiv 0 \pmod p$. Observe that
\[
\psi_1\big((a^ib^jc)^{p^{m_1}}\big)=(a_1^{\,js}b_1^{\,j}c_1,\ldots, a_1^{\,js}b_1^{\,j}c_{p^{{m_1}}})
\]
for $c_1,\ldots, c_{p^{m_1}}\in G_1'$.
Hence, for $w\in \prod_{k=1}^{n-2}X_k$, we have
\begin{align*}
\psi_{n-1}^{w}((a^ib^jc)^{p^{t_{n-1}}})&=(a_{n-1}^{\,js}b_{n-1}^{\,j}d_1,\ldots, a_{n-1}^{\,js}b_{n-1}^{\,j}d_{p^{m_{n-}} })\\
&\equiv (a_{n-1}^{\,js},\overset{p^{m_{n-1}}}\ldots, a_{n-1}^{\,js}) \pmod {\st_{G_{n-1}}(1)\times \overset{p^{m_{n-1}}}\ldots \times \st_{G_{n-1}}(1)}
\end{align*}
for some $d_1,\ldots,d_{p^{m_{n-1}}} \in G_{n-1}'$. Therefore the result follows.
\end{proof}

\begin{proof}[Proof of Proposition~\ref{Beauville}(ii)]
The result is clear for $n=1$, so we assume $n\ge 2$.  Let $\{x_1,y_1\}$ and $\{x_2,y_2\}$ be two systems of generators. At least one of $x_1,y_1,x_1y_1$, call it $z_1$, must be in the coset $a^ib^jG'$ for $i,j\not\equiv 0 \pmod p$. Likewise for $x_2,y_2,x_2y_2$, and call it $z_2$. Then by the previous lemma, $\langle z_1^{p^{t_{n-1}}}\rangle = \langle z_2^{p^{t_{n-1}}}\rangle $. Hence we are done.
\end{proof}


\bibliographystyle{alpha}

\end{document}